\newtheorem{theorem}{Theorem}[section]
\newtheorem{lemma}[theorem]{Lemma}
\def\disp{\displaystyle}
\def\ifl{\iffalse }
\def\bc{\begin{center}}       \def\ec{\end{center}}
\def\ba{\begin{array}}        \def\ea{\end{array}}
\def\be{\begin{equation}}     \def\ee{\end{equation}}
\def\bea{\begin{eqnarray}}    \def\eea{\end{eqnarray}}
\def\beaa{\begin{eqnarray*}}  \def\eeaa{\end{eqnarray*}}
\def\dis{\displaystyle}
\numberwithin{equation}{section}
\newtheorem{proposition}[theorem]{Proposition}
\newtheorem{remark}[theorem]{Remark}
\numberwithin{equation}{section}
\begin{document}
\author{Huicong Li}
\address{School of Mathematics, Sun Yat-sen University, Guangzhou, 510275, Guangdong Province, China}
\email{lihuicong@mail.sysu.edu.cn}
\author{Rui Peng}
\address{School of Mathematics and Statistics, Jiangsu Normal University, Xuzhou, 221116, Jiangsu Province, China}
\email{pengrui\_seu@163.com}
\author{Tian Xiang$^\dagger$}
\address{Institute for Mathematical Sciences, Renmin University of China, Bejing, 100872, China}
\email{txiang@ruc.edu.cn}

\title[Dynamics and asymptotics of steady states]{Dynamics and asymptotic profiles of endemic equilibrium for two frequency-dependent SIS epidemic models with
cross-diffusion$^*$}
\thanks{$^\dagger$ Corresponding author} 
\thanks{$^*$H. Li was partially supported by China Postdoctoral Science Foundation (No. 2016M590335)
and NSF of China (Nos. 11701180, 11671143 and 11671144), R. Peng was partially supported by NSF of China (Nos. 11671175 and 11571200), the Priority Academic Program Development of Jiangsu Higher Education Institutions, Top-notch Academic Programs Project of Jiangsu Higher Education Institutions (No. PPZY2015A013) and Qing Lan Project of Jiangsu Province, and T. Xiang was partially supported by NSF of China (Nos. 11601516 and 11571363) and  the Research Funds  of Renmin University of China (No. 2018030199).}

\begin{abstract}
This paper is concerned with two frequency-dependent SIS epidemic reaction-diffusion models in heterogeneous environment, with a cross-diffusion term modeling the effect that susceptible individuals tend to move away from higher concentration of infected individuals.
It is first shown that the corresponding Neumann initial-boundary value problem in an $n$-dimensional bounded smooth domain  possesses a  unique global classical solution  which is uniformly-in-time bounded regardless of the strength of the cross-diffusion and the spatial dimension $n$. It is further shown that, even in the presence of cross-diffusion,  the models still admit threshold-type dynamics in terms of the basic reproduction number $\mathcal R_0$; that is, the unique disease free equilibrium is globally stable if $\mathcal R_0<1$, while if $\mathcal R_0>1$, the disease is uniformly persistent and there is an endemic equilibrium,  which is globally stable in some special cases with weak chemotactic sensitivity.
Our results on the asymptotic profiles of endemic equilibrium illustrate that restricting the motility of susceptible population may eliminate the infectious disease entirely for the first model with constant total population but fails for the second model with varying total population. In particular, this implies that such cross-diffusion does not contribute to the elimination of the infectious disease modelled by the second one.

\end{abstract}

\subjclass[2010]{35K57; 35A01; 35B40; 35Q92; 92D25}

\keywords{SIS epidemic reaction-diffusion model; Cross-diffusion; Global existence and boundedness; Endemic equilibrium; Persistence/extinction; Asymptotic profile}

\maketitle

\numberwithin{equation}{section}

\section{Introduction}
In this paper, we are interested in the following two diffusive SIS epidemic models with cross-diffusion and frequency-dependence:
\begin{equation}\label{SIS-1}
\begin{cases}
\disp S_t=d_S \Delta S+\chi\nabla\cdot( S \nabla I)-\beta(x) \frac{SI}{S+I}+\gamma(x) I,& x\in\Omega, t>0,\\
\disp I_t=d_I\Delta I+\beta(x) \frac{SI}{S+I}-\gamma(x) I,& x\in\Omega, t>0,\\
\disp \frac{\partial S}{\partial \nu}=\frac{\partial I}{\partial \nu}=0, & x\in \partial \Omega, t>0,\\
\disp (S(x,0),  I(x,0))=(S_0(x), I_0(x)), & x\in \Omega
\end{cases}
\end{equation}
and
\begin{equation}
\label{SIS-2}
\begin{cases}
\disp S_t=d_S \Delta S+\chi\nabla\cdot( S \nabla I)+\Lambda(x)-S-\beta(x) \frac{SI}{S+I}+\gamma(x) I,& x\in\Omega, t>0,\\
\disp I_t=d_I\Delta I+\beta(x) \frac{SI}{S+I}-\gamma(x) I,& x\in\Omega, t>0,\\
\disp \frac{\partial S}{\partial \nu}=\frac{\partial I}{\partial \nu}=0, & x\in \partial \Omega, t>0,\\
\disp (S(x,0),  I(x,0))=(S_0(x), I_0(x)), & x\in \Omega.
\end{cases}
\end{equation}
Here, $\Omega\subset\mathbb{R}^n~(n\geq 1)$ is a bounded domain with smooth boundary $\partial\Omega$. The unknown functions
$S(x,t)$ and $I(x,t)$, respectively, denote the population density of susceptible and infected individuals at location $x$ and time $t$; $d_S$ and $d_I$ are positive constants measuring the random mobility of susceptible and infected populations respectively; the cross-diffusion term $\chi \nabla \cdot (S\nabla I)$ stands for the \lq\lq chemotaxis\rq\rq\,effect that susceptible individuals are \lq\lq smart" and they tend to move away from higher concentration of infected individuals  with the positive constant $\chi$ representing the magnitude of this effect; and $\beta(x)$ and $\gamma(x)$ are positive H\"{o}lder continuous functions on $\overline\Omega$ accounting for the rates of disease transmission and recovery at location $x$, respectively. In \eqref{SIS-2}, the $S$-equation indicates that the susceptible population is subject to linear source $\Lambda-S$ with $\Lambda$ being a positive H\"{o}lder continuous function.
The homogeneous Neumann boundary conditions mean there is no population flux crossing the boundary $\partial\Omega$. As for the initial data $(S_0,I_0)$, we assume throughout this paper  that
\be
0\leq S_0\in C(\overline\Omega),\ \ \ I_0\in W^{1,\infty}(\Omega)\ \mbox{ and }\ I_0\geq0,\not\equiv0.
\label{initial}
\ee
Let
$$
N:=\int_{\Omega}(S_0(x)+I_0(x))dx>0
$$
be the total number of individuals in $\Omega$ at the initial time $t=0$. By integrating both equations in \eqref{SIS-1} and then adding the resulting identities, one can easily see that the total population is conserved. That is,
\be
\int_\Omega \left(S(x,t)+I(x,t)\right)dx=N,\quad \forall t>0.
\label{L1-bdd}
\ee
Throughout the text, we assume that $N$ is a given positive constant.
Obviously, such conservation property no longer holds for system \eqref{SIS-2}.

To investigate the effects of environmental heterogeneity and individual motility, Allen et al. \cite{Allen} proposed the following frequency-dependent SIS
(susceptible-infected-susceptible) epidemic reaction-diffusion system.
 \begin{equation}
 \left\{ \begin{array}{llll}
 \dis \frac{\partial S}{\partial t}-d_S\Delta S=-\beta(x)\frac{SI}{S+I}+\gamma(x)I,&x\in\Omega,\,t>0,\\
 \dis \frac{\partial I}{\partial t}-d_I\Delta I=\beta(x)\frac{SI}{S+I}-\gamma(x)I,&x\in\Omega,\,t>0,\\
 \dis \frac{\partial S}{\partial \nu}=\frac{\partial I}{\partial \nu}=0,&x\in\partial\Omega,\,t>0,\\
 \dis S(x,0)=S_0(x)\geq0,\,I(x,0)= I_0(x)\geq,\not\equiv 0,&x\in\Omega.
 \end{array}\right.
 \label{AllenSIS}
 \end{equation}
In \cite{Allen}, the authors defined the basic reproduction number $\mathcal R_0$ via a variational characterization and it was shown that the unique
disease-free equilibrium (DFE) is globally asymptotically stable if $\mathcal R_0<1$, whereas there exists a unique endemic equilibrium (EE) if $\mathcal R_0>1$.
Here, a DFE $(S,I)$ is an equilibrium with $I\equiv0$, whereas an EE $(S,I)$  is a steady state with $I(x)>0$ for some $x\in\Omega$.
The authors were particularly interested in the asymptotic behavior of the unique EE as $d_S$ approaches zero. Among other things,
their results imply that, if the spatial environment can be modified to
include low-risk sites and the movement of susceptible individuals can be restricted, then it may be possible to eliminate the infectious disease.

Although the existence and uniqueness of EE is proved in \cite{Allen} when $\mathcal R_0>1$, the global stability of it was left open. In some special cases,
the authors of \cite{PL09-NA} confirmed that it is indeed globally asymptotically stable. Further results concerning the asymptotic behavior of the EE of \eqref{AllenSIS}
were obtained by \cite{Peng-JDE09,Peng-Yi}. On the other hand, with $\beta$ and $\gamma$ being functions of spatiotemporal variables and
temporally periodic, the model  \eqref{AllenSIS} was treated by the second author and Zhao \cite{Peng-Zhao-2012},  and the theoretical findings of \cite{Peng-Zhao-2012} imply that the combination of spatial heterogeneity and temporal periodicity can enhance
the persistence of the disease. We refer interested readers to \cite{Allen1, CTZ, CLL, CL, DW,DHK, GR, GKLZ,HHL, KMP, Li-Peng-Wang,WZ} and the references therein
for related research work on \eqref{AllenSIS}.

The model \eqref{SIS-2} with $\chi=0$ was studied by Li et al.  \cite{Li-Peng-Wang}, where comprehensive qualitative analysis has been performed and the findings indicate  that a
varying total population can enhance persistence of infectious disease, and hence the disease becomes
more threatening and harder to control.

Biologically, the cross-diffusion introduced to the systems \eqref{SIS-1} and \eqref{SIS-2} represents a strategy that the susceptible implements to avoid infection by staying away from the infected (known as the repulsive chemotaxis phenomenon \cite{CLM-2008, Li-Pan-Zhao, Tao-2013, Tao-Wang}). The main purpose of this paper is to investigate the influence of such directed movement strategy  of the susceptible population on the persistence or extinction of infectious diseases
in the environment of spatial heterogeneity and random population movement via performing qualitative analysis on the systems \eqref{SIS-1} and \eqref{SIS-2}.
The cross-diffusion term  $\chi\nabla \cdot(S\nabla I)$ has been widely shown  to have a strong  effect in driving solutions of the underlying models to blow up in finite/infinite time, as can be seen in the extensively studied  Keller-Segel chemotaxis  related  systems \cite{BBTW15, JL92-TAMS, Win11, Win13}. Thus, the global solvability of systems \eqref{SIS-1} and \eqref{SIS-2} needs to be seriously treated  before we study their other dynamical properties. By a close inspection of the $I$-equation in \eqref{SIS-1} or \eqref{SIS-2}, we find that the essential  linearity  not only enables us to obtain the $L^\infty$-bound of $I$, but also that of $\nabla I$, while this information is usually unavailable in  most Keller-Segel models. With such a key observation, we are then able to establish the global existence and boundedness of classical solutions to \eqref{SIS-1} and  \eqref{SIS-2} for arbitrary  $\chi>0$ in any spatial dimensions; see Theorem \ref{glo-sol}. This result shows the cross-diffusion does not destroy the global solvability of the corresponding  system without cross-diffusion.

As in \cite{Allen}, for our systems \eqref{SIS-1} and \eqref{SIS-2}, we use the same definition of the basic reproduction number $\mathcal R_0$ since it determines the local stability of the unique DFE. Then we are also able to establish the threshold type dynamics in terms of  $\mathcal R_0$. More specifically, we show that the unique DFE is in fact globally stable if $\mathcal R_0<1$ (see Theorems \ref{glo-sta-1} and \ref{glo-sta-2}), which yield the extinction of infectious disease in the long run. While in the case of $\mathcal R_0>1$, a unique EE exists for system \eqref{SIS-1} whereas its uniqueness is unclear for system \eqref{SIS-2} since we are no longer able to reduce the equilibrium problem to a single equation due to the non-conservation of total population. In the special case that the transmission rate is proportional to the recovery rate throughout the habitat, it is proved that the unique homogeneous EE is globally stable when $\mathcal R_0>1$, provided that $\chi>0$ is suitably small; see Theorems \ref{sta-ee} and \ref{sta-ee-2}, which cover and extend \cite[Theorem 1.2]{PL09-NA} with $\chi=0$. Compared to the no cross-diffusion system \eqref{AllenSIS}, our results suggest that such directed movement strategy adopted by the susceptible with insignificant magnitude does not help to eliminate the infectious disease.

To study the effect of random motility of susceptible populations, we discuss the asymptotic behavior of the EE as $d_S\to 0$. For system \eqref{SIS-1} with constant total population, whenever $\mathcal R_0>1$ and the domain includes points where the transmission rate is smaller than the recovery rate, it is shown that the unique EE tends to a spatially inhomogeneous DFE as $d_S\to 0$. Furthermore, the density of the susceptible population of this limiting DFE, positive on {\it low-risk sites} (where the transmission rate is less than the recovery rate, i.e., where $\beta(x)<\gamma(x)$), must also be positive at some (but not all) {\it high-risk sites} (where the transmission rate is larger than the recovery rate, i.e., where $\beta(x)>\gamma(x)$). This result agrees with that of \cite{Allen} for model \eqref{AllenSIS} without directed diffusion. From the biological point of view, this in particular means that it is possible to eliminate the disease entirely in the habitat by restricting the random motility of susceptible individuals to be small.
In stark contrast, for model \eqref{SIS-2} with varying total population, although we are not able to fully determine the asymptotic profile of EE for small $d_S>0$,
Theorem \ref{asy-ee-prob2} below implies that the disease still exists on the whole habitat for any given $\chi>0$, and therefore the introduction of cross-diffusion for the susceptible can not help to eliminate the disease.
As a consequence, the theoretical finding in the current paper, in combination with the result of \cite{Li-Peng-Wang}, suggests that the restriction of the diffusion rate of the susceptible is no longer an appropriate strategy for the eradication of infectious disease modelled by \eqref{SIS-2} where the total population number can vary.

The plan of this paper is organized as follows. In Section 2, we discuss the global existence and boundedness of solutions to models \eqref{SIS-1} and \eqref{SIS-2} based on a semigroup type argument. Section 3 is devoted to the threshold dynamics where the global stability of DFE and EE (in a special case) is studied. In Section 4, by reducing the equilibrium problem of \eqref{SIS-1} to a single equation, we establish the existence and uniqueness of EE. Asymptotic profile of the EE for small $d_S$ is then discussed in Section 5. Finally, in Section 6, we briefly investigate system \eqref{SIS-2} and point out the main differences.

\section{Global Existence and Boundedness}

In this section, we shall establish the global existence and boundedness property of classical solutions to \eqref{SIS-1} and \eqref{SIS-2} via semigroup theory. For the sake of reference, we present some known smoothing $L^p$-$L^q$ type estimates on the Neumann heat semigroup $\left(e^{tk\Delta}\right)_{t\geq0}$ on a bounded and smooth domain $\Omega$. One can find them in \cite[Lemma 1.3]{Win10-JDE}, \cite[Lemma 2.1]{Cao15} or \cite[Lemma 2.1]{JX-Non16}.

\begin{lemma}
\label{semigroup}
For $k>0$, let $\left(e^{tk\Delta}\right)_{t\geq0}$ be the Neumann heat semigroup and $\lambda_1=:\lambda_1(k)>0$ be the first positive Neumann eigenvalue of $-k\Delta$ on $\Omega$. Then there exist some positive constants $c_i$ {\rm (}$i=1,2,3,4${\rm )} depending only on $k$ and $\Omega$ fulfilling
\begin{enumerate}
\item[{\rm (i)}] If $1\leq q\leq p\leq \infty$, then
\begin{equation}
\nonumber
\left\|e^{tk\Delta}f\right\|_{L^p(\Omega)}\leq c_1 \left(1+t^{-\frac{n}{2}\left(\frac{1}{q}-\frac{1}{p} \right)} \right)\left\|f\right\|_{L^q(\Omega)},\quad\forall t>0
\end{equation}
holds for all $f\in L^q(\Omega)$.

\item[{\rm (ii)}] If $1\leq q\leq p\leq \infty$, then
\begin{equation}
\nonumber
\left\|\nabla e^{tk\Delta}f\right\|_{L^p(\Omega)}\leq c_2 \left(1+t^{-\frac{1}{2}-\frac{n}{2}\left(\frac{1}{q}-\frac{1}{p} \right)} \right)e^{-\lambda_1t}\left\|f\right\|_{L^q(\Omega)},\quad\forall t>0
\end{equation}
holds for all $f\in L^q(\Omega)$.

\item[{\rm (iii)}] If $2\leq q\leq p<\infty$, then
\begin{equation}
\nonumber
\left\|\nabla e^{tk\Delta}f\right\|_{L^p(\Omega)}\leq c_3 \left(1+t^{-\frac{n}{2}\left(\frac{1}{q}-\frac{1}{p} \right)} \right)e^{-\lambda_1t}\left\|\nabla f\right\|_{L^q(\Omega)},\quad\forall t>0
\end{equation}
holds for all $f\in W^{1,q}(\Omega)$.

\item[{\rm (iv)}] If $1<q\leq p\leq\infty$, then
\begin{equation}
\nonumber
\left\|e^{tk\Delta}\nabla \cdot f\right\|_{L^p(\Omega)}\leq c_4 \left(1+t^{-\frac{1}{2}-\frac{n}{2}\left(\frac{1}{q}-\frac{1}{p} \right)} \right)e^{-\lambda_1t}\left\| f\right\|_{L^q(\Omega)},\quad\forall t>0
\end{equation}
holds for all $f\in \left(L^q(\Omega)\right)^n$.
\end{enumerate}
\end{lemma}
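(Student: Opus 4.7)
The plan is to derive all four estimates from two standard ingredients: Gaussian upper bounds for the Neumann heat kernel $G(x,y,t)$ of $e^{tk\Delta}$, and the spectral gap $\lambda_1$ of $-k\Delta$ on the mean-zero subspace of $L^2(\Omega)$. Classical parabolic theory on a bounded smooth domain (via local reflection across $\partial\Omega$ combined with the interior Gaussian bound, or via Davies/Moser techniques) yields positive constants $C,c$ depending only on $k$ and $\Omega$ with
\[
0\le G(x,y,t)\le C(1+t^{-n/2})e^{-|x-y|^2/(ct)},\qquad |\nabla_x G(x,y,t)|\le C(1+t^{-(n+1)/2})e^{-|x-y|^2/(ct)}
\]
for all $x,y\in\Omega$, $t>0$. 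These will serve as the pointwise engine for all four parts.

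For (i), I would apply a Young-type convolution inequality to the representation $(e^{tk\Delta}f)(x)=\int_\Omega G(x,y,t)f(y)\,dy$; the short-time regime produces the singular factor $t^{-\frac{n}{2}(1/q-1/p)}$, while for $t\ge 1$ the kernel is uniformly bounded, yielding the stated $1+\cdot$ form. For (ii), the analogous representation $\nabla(e^{tk\Delta}f)(x)=\int_\Omega \nabla_x G(x,y,t)f(y)\,dy$ immediately produces the singular factor $t^{-1/2-\frac{n}{2}(1/q-1/p)}$. To insert the factor $e^{-\lambda_1 t}$, I would exploit that $\nabla_x G(x,\cdot,t)$ has zero spatial mean (since $1$ is the ground-state Neumann eigenfunction), so $f$ may be replaced by $f-\bar f$; splitting $e^{tk\Delta}=e^{(t/2)k\Delta}e^{(t/2)k\Delta}$ and using the exponential decay of $e^{sk\Delta}$ on the mean-zero subspace (from the spectral gap) then produces the stated decay.

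For (iii), I would use the commutation $\nabla e^{sk\Delta}f=e^{sk\Delta}\nabla f$ (valid after subtracting $\bar f$, with justification via the spectral decomposition since $q\ge 2$), combined with part (i) applied to $\nabla f$ and a semigroup splitting $\nabla e^{tk\Delta}=\nabla e^{(t/2)k\Delta}\,e^{(t/2)k\Delta}$ to separate the polynomial singular factor from the exponential decay. Finally, (iv) follows from (ii) by duality: for a test field $\varphi\in L^{p'}$, integration by parts gives $\int_\Omega \varphi\cdot e^{tk\Delta}(\nabla\cdot f)\,dx=-\int_\Omega (\nabla e^{tk\Delta}\varphi)\cdot f\,dx$, and applying (ii) between the dual exponents $p'$ and $q'$ controls the right-hand side, yielding the desired bound on $e^{tk\Delta}\nabla\cdot f$.

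The main obstacle is not the short-time singularity, which is essentially immediate from the kernel bounds and Young's inequality, but rather the careful extraction of the long-time exponential decay $e^{-\lambda_1 t}$: this factor is invisible in the pointwise Gaussian estimate and must be recovered by subtracting the mean of $f$, invoking the spectral gap on the orthogonal complement of the constants, and then recombining the polynomial short-time blow-up with the exponential long-time decay via semigroup splitting. Since the lemma is a compilation of known results from \cite{Win10-JDE, Cao15, JX-Non16}, one could also simply cite those sources; the sketch above indicates the underlying mechanism.
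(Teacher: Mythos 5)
The paper offers no proof of this lemma at all: it is stated as a compilation of known estimates and the reader is referred to \cite[Lemma 1.3]{Win10-JDE}, \cite[Lemma 2.1]{Cao15} and \cite[Lemma 2.1]{JX-Non16}. So your closing remark --- that one could simply cite those sources --- is exactly what the authors do, and your sketch of (i), (ii) and (iv) reproduces the standard mechanism behind those references: Gaussian upper bounds for the Neumann kernel give the short-time singular factors via Young's inequality, the long-time factor $e^{-\lambda_1 t}$ is recovered by replacing $f$ with $f-\bar f$ (legitimate in (ii) because $e^{tk\Delta}$ fixes constants, so $\nabla e^{tk\Delta}\bar f=0$) together with the $L^2$ spectral gap, ultracontractive smoothing to pass to general exponents, and a semigroup splitting; and (iv) follows from (ii) by the self-adjointness/duality computation you indicate, with $\frac{1}{p'}-\frac{1}{q'}=\frac{1}{q}-\frac{1}{p}$ giving the right exponent.

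The genuine gap is in your argument for (iii). The commutation $\nabla e^{sk\Delta}f=e^{sk\Delta}\nabla f$ is \emph{false} for the Neumann semigroup, and the spectral decomposition does not rescue it: writing $f=\sum_j a_j\phi_j$ in Neumann eigenfunctions gives $\nabla e^{tk\Delta}f=\sum_j a_je^{-\mu_jt}\nabla\phi_j$, and for this to equal the componentwise scalar semigroup applied to $\nabla f$ one would need each $\partial_i\phi_j$ to be a Neumann eigenfunction; it does satisfy $-\Delta(\partial_i\phi_j)=\mu_j\,\partial_i\phi_j$ in $\Omega$, but $\partial_\nu(\partial_i\phi_j)$ does not vanish on $\partial\Omega$ in general, so $e^{tk\Delta}(\partial_i\phi_j)\neq e^{-\mu_jt}\partial_i\phi_j$. (The commutation is valid on $\mathbb{R}^n$ or with periodic boundary conditions, which is presumably the source of the confusion; it only yields (iii) directly for $p=q=2$ via Parseval.) The cited references therefore prove (iii) differently: the case $p>q$ is reduced to $p=q$ by writing $\nabla e^{tk\Delta}f=\nabla e^{(t/2)k\Delta}\bigl(e^{(t/2)k\Delta}f\bigr)$ and invoking (i)--(ii), while the case $p=q$, i.e. the $W^{1,p}$-boundedness with exponential decay, is established by a direct energy/testing argument for $u=e^{tk\Delta}f$ (differentiating $\int_\Omega|\nabla u|^p$ in time and controlling the boundary terms), not by commuting $\nabla$ past the semigroup. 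As written, your derivation of (iii) does not go through; the other three parts and the citation fallback are fine.
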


For notational convenience, throughout the paper, we shall denote
$$m^*=\max_{x\in\overline\Omega}m(x)\quad \mbox{and}\quad m_*=\min_{x\in\overline\Omega}m(x)$$
with $m\in \{\beta, \gamma\}$.

Using Lemma \ref{semigroup} and Banach's contraction mapping theorem, one can establish the local solvability of systems \eqref{SIS-1} and \eqref{SIS-2} . For details of the similar reasoning, we refer to \cite[Theorem 3.1]{HW05} and \cite[Lemma 1.1]{Win10-CPDE}; see also \cite[Lemma 3.1]{BBTW15}.

\begin{lemma}
\label{loc-sol}
Assume that the initial data fulfills \eqref{initial}. Then there exists $T_{\rm max}\in (0,\infty]$ and a uniquely determined pair of nonnegative functions
\begin{eqnarray}
&&\disp S\in C(\overline\Omega \times[0,T_{\rm max}))\cap C^{2,1}(\overline\Omega\times(0,T_{\rm max}))  \nonumber \\
&&\disp I\in C(\overline\Omega \times[0,T_{\rm max}))\cap C^{2,1}(\overline\Omega\times(0,T_{\rm max}))\cap L^\infty_{\rm loc}([0,T_{\rm max});W^{1,p}(\Omega))
\nonumber
\end{eqnarray}
with any $p>1$ such that $(S, I)$ solves \eqref{SIS-1} classically in $\Omega\times(0,T_{\rm max})$. Furthermore, if $T_{\rm max}<\infty$, then, for any $p>1$,
\begin{equation}
\|S(\cdot,t)\|_{L^\infty(\Omega)}+\|I(\cdot,t)\|_{W^{1,p}(\Omega)}\to\infty \mbox{ as } t\nearrow T_{\rm max}.
\label{extend}
\end{equation}
The same local-in-time well-posedness  holds true for model \eqref{SIS-2}.  For model \eqref{SIS-1}, the conservation law \eqref{L1-bdd} holds in $(0,T_{\rm max})$; for model \eqref{SIS-2}, the following uniform $L^1$-estimate for $S+I$ holds in $(0,T_{\rm max})$:
\be
\begin{split}
&\int_\Omega \left(S(x,t)+(1+\frac{1}{2\beta^*})I(x,t)\right)dx\\
&\leq \int_\Omega \left(S_0(x)+(1+\frac{1}{2\beta^*})I_0(x)\right)dx+ \frac{2\int_\Omega \Lambda(x)dx}{\min\{1, \frac{2\gamma_*}{1+2\beta^*}\}}=:\hat{N},\quad \forall t>0.
\end{split}
\label{L1-bdd2}
\ee
\end{lemma}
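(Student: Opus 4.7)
My plan is to (i) obtain local existence and uniqueness by applying Banach's fixed point theorem to the mild formulation, (ii) obtain nonnegativity and the blow-up alternative \eqref{extend} from the maximum principle together with a standard restart argument, and (iii) derive the mass identity \eqref{L1-bdd} and the $L^1$-bound \eqref{L1-bdd2} by integrating carefully weighted linear combinations of the two equations. For (i), I would recast \eqref{SIS-1} in Duhamel form using the Neumann heat semigroups $(e^{td_S\Delta})_{t\geq 0}$ and $(e^{td_I\Delta})_{t\geq 0}$, fix some $p>n$, and work on a closed ball $X_T$ of radius $R$ (chosen in terms of $\|S_0\|_{L^\infty}$ and $\|I_0\|_{W^{1,\infty}}$) inside
$$\bigl\{(S,I)\in C^0(\overline\Omega\times[0,T])^2:\ \nabla I\in L^\infty\bigl((0,T);L^p(\Omega)\bigr)\bigr\}.$$
Parts (i), (iii), and (iv) of Lemma \ref{semigroup} then control, respectively, the Nemytskii nonlinearities (via $\beta SI/(S+I)\leq \beta^* I$ and $\gamma I\leq \gamma^* I$), the propagation of $\|\nabla I(\cdot,t)\|_{L^p}$ (since the $I$-equation is linear in $I$ with bounded coefficients once $S$ is frozen), and the chemotactic convolution $\int_0^t e^{(t-s)d_S\Delta}\nabla\cdot(S\nabla I)(s)\,ds$ via $S\nabla I\in L^p$. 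Because the temporal singularity $t^{-1/2-n/(2p)}$ in part (iv) is integrable for $p>n$, for $T$ small enough the induced map is a self-map and a contraction on $X_T$. The scheme is standard \cite[Theorem 3.1]{HW05}, \cite[Lemma 1.1]{Win10-CPDE}, \cite[Lemma 3.1]{BBTW15}, and the additional source $\Lambda-S$ in \eqref{SIS-2} is linearly bounded and changes nothing.

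Nonnegativity of $I$ is immediate from the strong maximum principle applied to $I_t-d_I\Delta I+\gamma I=\beta SI/(S+I)\geq 0$; nonnegativity (in fact strict positivity for $t>0$) of $S$ then follows from the same principle applied to the $S$-equation recast as
$$S_t-d_S\Delta S-\chi\nabla I\cdot\nabla S-\Bigl(\chi\Delta I-\tfrac{\beta I}{S+I}\Bigr)S=\gamma I\ \ (+\,\Lambda\text{ in }\eqref{SIS-2})\geq 0.$$
For \eqref{extend}, if $\|S(\cdot,t)\|_{L^\infty}+\|I(\cdot,t)\|_{W^{1,p}}$ stayed bounded on $[0,T_{\max})$, parabolic smoothing would upgrade $I(\cdot,t_0)$ to $W^{1,\infty}$ for any fixed $t_0\in(0,T_{\max})$, and the contraction argument restarted at $t_0$, with a time step depending only on this bound, would extend the solution past $T_{\max}$ --- contradicting its maximality.

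For (iii), integrating both equations of \eqref{SIS-1} over $\Omega$ makes the divergence terms vanish by the Neumann boundary conditions and cancels the opposite-sign reactions on addition, giving $\tfrac{d}{dt}\int_\Omega(S+I)=0$ and hence \eqref{L1-bdd}. For \eqref{SIS-2}, set $a=1+\tfrac{1}{2\beta^*}$ and $L(t):=\int_\Omega(S+aI)$; the same integration yields
$$L'(t)=\int_\Omega\Lambda-\int_\Omega S+(a-1)\int_\Omega\Bigl(\tfrac{\beta SI}{S+I}-\gamma I\Bigr),$$
and the crucial pointwise bound $SI/(S+I)\leq S$ gives $(a-1)\int_\Omega\beta SI/(S+I)\leq (a-1)\beta^*\int_\Omega S=\tfrac12\int_\Omega S$, which is absorbed by half of the term $-\int_\Omega S$. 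This leaves $L'(t)\leq \int_\Omega\Lambda-\alpha L(t)$ with $\alpha=\tfrac12\min\{1,\tfrac{2\gamma_*}{1+2\beta^*}\}$, whence \eqref{L1-bdd2} follows from the elementary ODE estimate $L(t)\leq L(0)+\alpha^{-1}\int_\Omega\Lambda$. The main obstacle is step (i): the cross-diffusion term loses one derivative on $I$, which forces one to track an $L^p$-norm of $\nabla I$ with $p>n$ so that the singularity in Lemma \ref{semigroup}(iv) remains integrable, and the linearity of the $I$-equation in $I$ together with the assumption $I_0\in W^{1,\infty}(\Omega)$ is precisely what allows Lemma \ref{semigroup}(iii) to propagate this norm, closing the fixed-point argument for arbitrary $\chi>0$ and any dimension $n\geq 1$.
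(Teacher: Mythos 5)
Your proposal is correct and follows essentially the same route as the paper: local well-posedness by a Banach fixed-point argument in a space tracking $\|\nabla I(\cdot,t)\|_{L^p}$ with $p>n$ (the paper delegates this to the same references you cite), nonnegativity by the maximum principle, and the bound \eqref{L1-bdd2} by integrating $S+(1+\tfrac{1}{2\beta^*})I$, using $\beta SI/(S+I)\le\beta^*S$ to absorb the infection term into $-\tfrac12\int_\Omega S$, and closing with the same linear Gronwall inequality. The constants you obtain match $\hat N$ exactly.
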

\begin{proof} As noted above, the statements concerning the local-in-time existence of  classical solutions to the initial-boundary value problems \eqref{SIS-1} and   \eqref{SIS-2} and the criterion \eqref{extend} are well-studied. The nonnegativity (positivity) of $(S, I)$ follows simply from the maximum principle.  Due to  no flux boundary conditions, upon integration of the $S$- and $I$-equation in \eqref{SIS-1}, the conservation law \eqref{L1-bdd} follows trivially.    For the uniform  $L^1$-bound  in  \eqref{L1-bdd2}, by straightforward computations, we deduce from the IBVP \eqref{SIS-2} that
\be\nonumber
\begin{split}
&\frac{d}{dt}\int_\Omega \left(S(x,t)+(1+\frac{1}{2\beta^*})I(x,t)\right)dx\\
&=\int_\Omega \Lambda(x)dx-\int_\Omega S(x,t)dx+\frac{1}{2\beta^*}\int_\Omega \frac{\beta(x)S(x,t) I(x,t)}{S(x,t)+I(x,t)}dx-\frac{1}{2\beta^*}\int_\Omega  \gamma(x) I(x,t)dx\\
&\leq \int_\Omega \Lambda(x)dx-\frac{1}{2}\int_\Omega S(x,t)dx-\frac{\gamma_*}{2\beta^*}\int_\Omega I(x,t)dx\\
&\leq \int_\Omega \Lambda(x)dx-\frac{1}{2}\min\left\{1, \frac{2\gamma_*}{1+2\beta^*}\right\}\int_\Omega \left(S(x,t)+(1+\frac{1}{2\beta^*})I(x,t)\right)dx.
\end{split}
\ee
Solving this standard Gronwall differential inequality, we arrive at \eqref{L1-bdd2}.
\end{proof}
 Our main result on global existence and uniform-in-time boundedness for \eqref{SIS-1} and \eqref{SIS-2} reads precisely as follows.
\begin{theorem}
\label{glo-sol}
Assume that the initial data fulfills \eqref{initial}. Then each of the cross-diffusive SIS models \eqref{SIS-1} and \eqref{SIS-2} possesses a uniquely determined global classical  solution $(S, I)$ for which both $S$ and $I$ are positive  and bounded in $\overline\Omega\times (0,\infty)$. That is, there exists some $M>0$ depending on initial data and the model parameters such that
\be\label{bdd0}
\|S(\cdot,t)\|_{L^\infty(\Omega)}+\|I(\cdot,t)\|_{W^{1,\infty}(\Omega)}\leq M,\quad \forall t>0.
\ee
Moreover, there exists some $M'>0$ independent of initial data fulfilling
\be
\|S(\cdot,t)\|_{L^\infty(\Omega)}+\|I(\cdot,t)\|_{L^\infty(\Omega)}\leq M',\quad \forall t>T
\label{bdd}
\ee
for some large $T>0$. Furthermore, the $L^\infty$-bound of $I$ is uniform in $\chi$, i.e.,
\be\label{I-ub}
\|I(\cdot,t)\|_{L^\infty(\Omega)}\leq M_I(n, \Omega, \gamma_*,\beta^*) \left(1+\frac{1}{d_I}\right)^n \max\left\{\|I_0\|_{L^\infty(\Omega)},\tilde{N}\right\},\quad \forall t>0.
\ee
Here, $\tilde{N}=N$ for model \eqref{SIS-1} and $\tilde{N}=\hat{N}$ with $\hat{N}$ defined by \eqref{L1-bdd2} for model \eqref{SIS-2}.
\end{theorem}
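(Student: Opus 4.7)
The plan is to bootstrap in two essentially decoupled stages, exploiting the observation already emphasized in the introduction: the $I$-equation is linear in $I$ with coefficients that are bounded independently of $\chi$ and of $\|S\|_{L^\infty}$. Indeed, writing it as $I_t=d_I\Delta I + c(x,t)I$ with $c=\beta(x)\frac{S}{S+I}-\gamma(x)$, the multiplier satisfies $|c|\leq \beta^*+\gamma^*$ without any knowledge of $S$. Consequently $I$ and $\nabla I$ can be tamed first, and only afterwards does one return to the cross-diffusive $S$-equation. Throughout, the extensibility criterion \eqref{extend} reduces the global-existence question to the derivation of a priori bounds for $\|S(\cdot,t)\|_{L^\infty}$ and $\|I(\cdot,t)\|_{W^{1,p}(\Omega)}$.

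\textbf{Step 1 ($L^\infty$-bound on $I$).} Testing the $I$-equation against $I^{p-1}$ for $p\geq 2$ and using $\frac{SI}{S+I}\leq I$ yields
$$\frac{1}{p}\frac{d}{dt}\int_\Omega I^p + \frac{4(p-1)d_I}{p^2}\int_\Omega |\nabla I^{p/2}|^2 \leq \beta^*\int_\Omega I^p.$$
Coupling this with the $L^1$-control \eqref{L1-bdd} or \eqref{L1-bdd2} (which gives $\|I(\cdot,t)\|_{L^1}\leq\tilde N$) and Gagliardo--Nirenberg interpolation, a standard Alikakos--Moser iteration produces the explicit bound \eqref{I-ub}; the factor $(1+1/d_I)^n$ arises from tracking the dependence on $d_I$ through the interpolation at each iteration level, and the independence from $\chi$ is manifest since $\chi$ never enters the $I$-equation.

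\textbf{Step 2 ($W^{1,p}$-bound on $I$).} With $\|I(\cdot,t)\|_{L^\infty}$ under control, the reaction $g:=\beta\frac{SI}{S+I}-\gamma I$ is uniformly bounded in $L^\infty(\Omega)$. Applying the variation-of-constants formula and invoking Lemma \ref{semigroup}(iii) on the initial-datum term (using $I_0\in W^{1,\infty}(\Omega)$) and Lemma \ref{semigroup}(ii) on the Duhamel integral, the integrability of $(1+\tau^{-1/2})e^{-\lambda_1\tau}$ on $[0,\infty)$ yields a uniform-in-time bound $\|\nabla I(\cdot,t)\|_{L^p(\Omega)}\leq C$ for every $p<\infty$.

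\textbf{Step 3 ($L^\infty$-bound on $S$ and large-time refinement).} Writing the $S$-equation in Duhamel form
$$S(t)=e^{td_S\Delta}S_0-\chi\int_0^t e^{(t-s)d_S\Delta}\nabla\!\cdot\!(S\nabla I)(s)\,ds+\int_0^t e^{(t-s)d_S\Delta}h(s)\,ds,$$
where $h$ absorbs the (already bounded) lower-order reaction, I will bootstrap the exponent $p$ in $\|S(\cdot,t)\|_{L^p(\Omega)}$, starting from the $L^1$-bound supplied by \eqref{L1-bdd} or \eqref{L1-bdd2}. Applying Lemma \ref{semigroup}(iv) to the chemotactic contribution and H\"older's inequality $\|S\nabla I\|_{L^q}\leq \|S\|_{L^p}\|\nabla I\|_{L^r}$ (with $r$ taken arbitrarily large thanks to Step 2), one propagates an $L^{p_k}$-bound into an $L^{p_{k+1}}$-bound with $p_{k+1}>p_k$, reaching $p=\infty$ after finitely many iterations. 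The factors $e^{-\lambda_1\tau}$ render all Duhamel integrals uniformly bounded in $t$, which gives \eqref{bdd0}; the initial-data-independent bound \eqref{bdd} then follows because $e^{td_S\Delta}S_0$ decays to its spatial mean as $t\to\infty$, and the mean itself is uniformly controlled through the conservation law for \eqref{SIS-1} and through the dissipative $-S$ term for \eqref{SIS-2}, so that only the nonlinear Duhamel contributions, bounded in terms of intrinsic parameters, survive past some finite $T$.

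The main obstacle will be Step 3: even though $\nabla I$ is already controlled in every $L^r$, the singular kernel $(t-s)^{-1/2-n/(2q)}$ from Lemma \ref{semigroup}(iv) must be handled carefully so that each iteration step genuinely improves integrability while keeping the constants uniform in time. One must also check that the jump in exponents $p_{k+1}-p_k$ is bounded below by a positive quantity so that $p=\infty$ is reached in finitely many steps, and verify that no implicit constant in the iteration scheme blows up with $\chi$ — these are routine but delicate checks familiar from the semigroup treatment of Keller--Segel-type systems.
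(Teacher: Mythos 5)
Your Steps 1 and 2 follow the same route as the paper: the key observation that the $I$-equation is linear with a coefficient bounded by $\beta^*+\gamma^*$ independently of $\chi$ and $S$, so that the $L^1$-control \eqref{L1-bdd}/\eqref{L1-bdd2} upgrades to \eqref{I-ub} by Moser--Alikakos iteration, and the Duhamel representation with Lemma \ref{semigroup}(ii)--(iii) controls $\nabla I$ (the paper in fact obtains $\|\nabla I(\cdot,t)\|_{L^\infty(\Omega)}\leq C$, not merely every finite $L^p$, by taking $p=q=\infty$ in (ii); this is harmless for your plan since you only need $\|\nabla I\|_{L^r}$ for some large $r$).

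The genuine gap is in Step 3, and it is not the one you flagged. Your iterative scheme ``$L^{p_k}\Rightarrow L^{p_{k+1}}$ starting from the $L^1$-bound'' cannot be initialized. Lemma \ref{semigroup}(iv) requires $q>1$, and the H\"older estimate $\|S\nabla I\|_{L^q}\leq\|S\|_{L^p}\|\nabla I\|_{L^r}$ requires an a priori bound on $\|S\|_{L^p}$ for some $p>1$, which is exactly what you do not yet have: the only available information is $\|S\|_{L^1}\leq \tilde N$. If instead you take $q=p_k$ (using $\|S\nabla I\|_{L^{p_k}}\leq\|\nabla I\|_{L^\infty}\|S\|_{L^{p_k}}$), the kernel exponent is $-1/2$ and integrable, but the resulting inequality $\sup_t\|S\|_{L^{p_k}}\leq C+C'\chi\sup_t\|S\|_{L^{p_k}}$ closes only under a smallness condition on $\chi$, contrary to the claim of arbitrary $\chi$. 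The paper resolves this not by bootstrapping exponents but by a single absorption argument at the level $p=\infty$: setting $R=\sup_{t<T}\|S(\cdot,t)\|_{L^\infty(\Omega)}$ and interpolating $\|S\|_{L^p}\leq\|S\|_{L^\infty}^{1-1/p}\|S\|_{L^1}^{1/p}\leq R^{1-1/p}N^{1/p}$ inside the Duhamel integrals (with one fixed $p>n$ so that the kernel $(t-s)^{-1/2-n/(2p)}$ is integrable), which yields the self-consistent sublinear inequality $R\leq C_{10}R^{1-1/p}+C_{11}$ and hence a bound on $R$. You should replace your exponent iteration by this interpolation--absorption step; once that is done, the remainder of your outline (including the restart of the Duhamel representations at large times $T_1<T_2$ to obtain the initial-data-independent bound \eqref{bdd}, which requires re-deriving the $\nabla I$ bound from time $T_1$ onward so that it no longer carries $\|I_0\|_{W^{1,\infty}(\Omega)}$) matches the paper's argument.
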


\begin{proof}
Thanks to Lemma \ref{loc-sol}, we shall first show $T_{\rm max}=\infty$ and then the global boundedness of $(S,I)$. To this end,  we start with the $I$-associated  problem
\be
\left\{ \begin{array}{ll}
\disp I_t=d_I\Delta I+B(x,t)I,&x\in\Omega,t>0,\\
\disp \frac{\partial I}{\partial\nu}=0,&x\in\partial\Omega,t>0,\\
\disp I(x,0)=I_0(x),&x\in\Omega,
\end{array}\right.
\label{I-linear}
\ee
where $$B(x,t)=\beta(x)\frac{S(x,t)}{S(x,t)+I(x,t)}-\gamma(x).$$
It is clear that $B$ is uniformly bounded by $(\beta^*+\gamma^*)$ and is locally Lipschitz on $\Omega\times (0,T_{\rm max})$. Furthermore, $\|I(\cdot,t)\|_{L^1(\Omega)}\leq \tilde{N}$ for $t\in (0,T_{\rm max})$ due to the validity of \eqref{L1-bdd} and \eqref{L1-bdd2}  in $(0,T_{\rm max})$. Thus,  \cite[Theorem 3.1  on \lq\lq$L^1$-boundedness implies $L^\infty$-boundedness"]{Al79} (see also \cite[Lemma 3.1]{Peng-Zhao-2012}) applied to \eqref{I-linear} yields the existence of a positive constant $C_1$ such that
\be
\|I(\cdot,t)\|_{L^\infty(\Omega)}\leq C_1(d_I,\Omega, n,\beta, \gamma),\quad \forall t\in(0,T_{\rm max}).
\label{I-bdd}
\ee
Thanks to the $L^1$-bound in \eqref{L1-bdd} and \eqref{L1-bdd2}, the bound for $I$ in \eqref{I-ub} indeed could be obtained via standard  Moser iteration applied to \eqref{I-linear}.

Next, according to the variation-of-constants formula, we have
$$I(\cdot,t)=e^{t d_I \Delta}I_0+\int_0^t e^{(t-\tau)d_I\Delta} \left(\frac{\beta SI}{S+I}-\gamma I\right)(\cdot,\tau)d\tau, \quad \forall t\in(0,T_{\rm max}),$$
from which it follows that
\be
\label{grad-I}
\begin{split}
\left\|\nabla I(\cdot, t)\right\|_{L^\infty(\Omega)}&\leq \left\|\nabla e^{t d_I \Delta}I_0 \right\|_{L^\infty(\Omega)}\\
&\quad+\int_0^t \left\| \nabla e^{(t-\tau)d_I\Delta} \left(\frac{\beta SI}{S+I}-\gamma I\right)(\cdot,\tau)\right\|_{L^\infty(\Omega)}d\tau.
\end{split}
\ee
For  $p\in [2,\infty)$, Lemma \ref{semigroup} (iii) entails, for all $t>0$,
$$
\left\|\nabla e^{td_I\Delta}I_0\right\|_{L^p(\Omega)}\leq 2c_3e^{-\lambda_1(d_I)t}\left\|\nabla I_0\right\|_{L^p(\Omega)}\leq 2c_3e^{-\lambda_1(d_I)t}\max\{|\Omega|^\frac{1}{2}, 1\}\left\|\nabla I_0\right\|_{L^\infty(\Omega)}. 
$$
This immediately implies the existence of a constant $C_2>0$ such that
\be\label{grad-I-term1}
\left\| \nabla e^{t d_I \Delta}I_0\right\|_{L^\infty(\Omega)}\leq C_2 \left\| I_0\right\|_{W^{1,\infty}(\Omega)},\quad \forall t>0.
\ee
On the other hand, it follows from Lemma \ref{semigroup} (ii) that
\be\nonumber
\begin{split}
&\left\| \nabla e^{(t-\tau)d_I\Delta} \left(\frac{\beta SI}{S+I}-\gamma I\right)(\cdot,\tau)\right\|_{L^\infty(\Omega)}\\
&\leq c_2\left(1+(t-\tau)^{-\frac{1}{2}}\right)e^{-\lambda_1(d_I) (t-\tau)}  (\beta^*+\gamma^*)\left\| I(\cdot,\tau)\right\|_{L^\infty(\Omega)},\quad \forall t\in (\tau,T_{\rm max}).
\end{split}
\ee
This along with \eqref{grad-I}, \eqref{grad-I-term1} and the $L^\infty$-boundedness of $I$ yields that
\begin{eqnarray}
&&\left\|\nabla I(\cdot, t)\right\|_{L^\infty(\Omega)}\leq C_2 \left\| I_0\right\|_{W^{1,\infty}(\Omega)}+C_3\int_0^t \left(1+(t-\tau)^{-\frac{1}{2}}\right)e^{-\lambda_1(d_I) (t-\tau)}d\tau \nonumber \\
&&\quad \quad \quad \quad \quad \quad \quad  \leq C_4,\quad \forall t\in (0,T_{\rm max}).\label{grad-I-bdd}
\end{eqnarray}
Now, we are ready to derive the $L^\infty$-bound of $S$. For definiteness, we  will first work on  \eqref{SIS-1} and just  give a quick  remark for \eqref{SIS-2} in the end. Fix any $0<T<T_{\rm max}$ and $p>n$. We rewrite the $S$-equation in \eqref{SIS-1}  as
$$S_t-d_S\Delta S+S=\chi \nabla\cdot (S\nabla I)+S-\beta\frac{ S}{S+I}I+\gamma I,$$
which gives, upon an application of the variation-of-constants formula,
\be
\nonumber
\begin{split}
S(\cdot,t)&=e^{t(d_S\Delta -1)}S_0+\chi \int_0^t e^{(t-\tau)(d_S\Delta-1)}\nabla \cdot \left(S(\cdot,\tau)\nabla I(\cdot,\tau) \right)d\tau\\
&\quad \ +\int_0^t e^{(t-\tau)(d_S\Delta-1)}\left(S-\beta\frac{S}{S+I}I+\gamma I \right)(\cdot,\tau)d\tau,\quad \forall t\in (0,T).
\end{split}
\ee
Taking supremum on both sides, we obtain
\begin{align}
 \left\| S(\cdot,t)\right\|_{L^\infty(\Omega)} &\leq \left\|e^{t(d_S\Delta -1)}S_0\right\|_{L^\infty(\Omega)} \nonumber \\
& \quad \ +\chi \int_0^t \left\| e^{(t-\tau)(d_S\Delta-1)}\nabla \cdot \left(S(\cdot,\tau)\nabla I(\cdot,\tau) \right)\right\|_{L^\infty(\Omega)} d\tau \nonumber \\
& \quad \ +\int_0^t \left\| e^{(t-\tau)(d_S\Delta-1)}\left(S-\beta\frac{S}{S+I}I+\gamma I \right)(\cdot,\tau)\right\|_{L^\infty(\Omega)} d\tau \nonumber \\
& =:{\rm I}+{\rm II}+{\rm III},\quad \forall t\in (0,T).\label{S-est}
\end{align}

We first deduce from the maximum principle that
\be
\label{term-I}
{\rm I} = e^{-t}\left\| e^{td_S\Delta}S_0\right\|_{L^\infty(\Omega)}\leq e^{-t}\left\| S_0\right\|_{L^\infty(\Omega)},\quad \forall t\in(0,T).
\ee
For convenience, we define
$$R=R(T):=\sup_{t\in (0,T)}\left\| S(\cdot, t)\right\|_{L^\infty(\Omega)}.$$
Using Lemma \ref{semigroup} (iv) and \eqref{grad-I-bdd}, we are led to
\begin{align}
{\rm II} &\leq \chi \int_0^t e^{-(t-\tau)}\left\| e^{(t-\tau)d_S\Delta}\nabla\cdot \left(S(\cdot,\tau)\nabla I(\cdot,\tau)\right)\right\|_{L^\infty(\Omega)}d\tau \nonumber \\
&  \leq C_5\int_0^t e^{-(t-\tau)}\left(1+(t-\tau)^{-\frac{1}{2}-\frac{n}{2p}}\right)\left\| S(\cdot,\tau)\nabla I(\cdot,\tau)\right\|_{L^p(\Omega)}d\tau \nonumber \\
&  \leq C_6\int_0^t e^{-(t-\tau)} \left(1+(t-\tau)^{-\frac{1}{2}-\frac{n}{2p}}\right)\left\| S(\cdot,\tau)\right\|_{L^p(\Omega)}d\tau \nonumber \\
& \leq C_6 \int_0^t e^{-(t-\tau)} \left(1+(t-\tau)^{-\frac{1}{2}-\frac{n}{2p}}\right)\left\| S(\cdot,\tau)\right\|_{L^\infty(\Omega)}^{1-\frac{1}{p}} \left\| S(\cdot,\tau)\right\|_{L^1(\Omega)}^{\frac{1}{p}} d\tau \nonumber \\
& \leq C_6 R^{1-\frac{1}{p}} N^{\frac{1}{p}} \int_0^t e^{-(t-\tau)} \left(1+(t-\tau)^{-\frac{1}{2}-\frac{n}{2p}}\right) d\tau \nonumber \\
& \leq C_6 C_7 R^{1-\frac{1}{p}} N^{\frac{1}{p}},
\quad \forall t\in (0,T),\label{term-II}
\end{align}
where the fact that $p>n$ was used to guarantee
\begin{align*}
C_7&=\int_0^t e^{-(t-\tau)} \left(1+(t-\tau)^{-\frac{1}{2}-\frac{n}{2p}}\right) d\tau\\
&\leq 1+\int_0^\infty e^{-\sigma}\sigma ^{-\frac{1}{2}-\frac{n}{2p}}d\sigma=1+\Gamma\left(\frac{1}{2}-\frac{n}{2p}\right)<\infty.
\end{align*}
 To estimate  ${\rm III}$, using \eqref{L1-bdd} and Lemma \ref{semigroup} (i), we deduce
\begin{align}
{\rm III} &\leq \int_0^t e^{-(t-\tau)} \left\|e^{(t-\tau)d_S\Delta} S(\cdot,\tau) \right\|_{L^\infty(\Omega)}d\tau \nonumber \\
& \quad \ +\int_0^t e^{-(t-\tau)} \left\| e^{(t-\tau)d_S\Delta} \left(-\beta\frac{S}{S+I}I+\gamma I \right)(\cdot,\tau) \right\|_{L^\infty(\Omega)}d\tau \nonumber \\
& \leq c_1 \int_0^t e^{-(t-\tau)} \left(1+(t-\tau)^{-\frac{n}{2p}}\right)\left\| S(\cdot,\tau)\right\|_{L^p(\Omega)}d\tau \nonumber \\
& \quad \ +\int_0^t e^{-(t-\tau)}  (\beta^*+\gamma^*)\left\| I(\cdot,\tau)\right\|_{L^\infty(\Omega)}d\tau \nonumber \\
&  \leq c_1 \int_0^t e^{-(t-\tau)} \left(1+(t-\tau)^{-\frac{n}{2p}}\right)\left\| S(\cdot,\tau)\right\|_{L^\infty(\Omega)}^{1-\frac{1}{p}} \left\| S(\cdot,\tau)\right\|_{L^1(\Omega)}^{\frac{1}{p}} d\tau+C_8 \nonumber \\
& \leq c_1 C_{9} R^{1-\frac{1}{p}} N^{\frac{1}{p}}+C_8 \label{term-III}
\end{align}
with
$$C_{9}=\int_0^\infty e^{-\sigma}\left(1+\sigma^{-\frac{n}{2p}}\right)d\sigma<\infty.$$
Substituting \eqref{term-I}, \eqref{term-II} and \eqref{term-III} into  \eqref{S-est}, we conclude that
$$R\leq C_{10}R^{1-\frac{1}{p}}+C_{11},$$
with $C_{10}=(C_6C_7+c_1 C_{9})N^{\frac{1}{p}}$ and $C_{11}=\|S_0\|_{L^\infty(\Omega)}+C_8$. Upon an elementary argument, we infer from the above inequality that
\be
\label{S-bound}
R\leq \max\left\{ \left(\frac{C_{11}}{C_{10}}\right)^{\frac{p}{p-1}}, \ \ (2C_{10})^p\right\}.
\ee
Since $T\in (0,T_{\rm max})$ is arbitrary, we conclude  there must  exist  $C>0$ such that
$$\left\| S(\cdot,t)\right\|_{L^\infty(\Omega)}\leq C,\quad\forall t\in(0,T_{\rm max}).$$
This in conjunction with  \eqref{I-bdd}, \eqref{grad-I-bdd} and \eqref{extend} indicates that $T_{\rm max}=\infty$.

Now, we proceed to find an upper bound of $\|S(\cdot,t)+I(\cdot,t)\|_{L^\infty(\Omega)}$ that is independent of initial data for large $t$. In fact,  a use of \cite[Lemma 3.1]{Peng-Zhao-2012} to problem \eqref{I-linear}  provides some constant $C_{12}>0$ independent of initial data and $T_1>0$ such  that
$$\|I(\cdot,t)\|_{L^\infty(\Omega)}\leq C_{12},\quad t\geq T_1.$$
Next, we represent $I$ in the following way:
$$I(\cdot,t)=e^{(t-T_1)(d_I\Delta-1)}I(\cdot,T_1)+\int_{T_1}^t e^{(t-\tau)(d_I\Delta-1)} \left(I +\beta\frac{SI}{S+I}-\gamma I\right)(\cdot,\tau)d\tau,\quad\forall t>T_1.$$
Then using a parallel argument to the one used to derive  \eqref{grad-I-bdd}, it is easily seen  there exists $T_2>T_1$, such that
$$\|\nabla I(\cdot,t)\|_{L^\infty(\Omega)}\leq C_{13},\quad\forall t\geq T_2,$$
for some constant $C_{13}>0$ independent of the initial data. Next representing $S$ as
\begin{align*}
S(\cdot,t)&=e^{(t-T_2)(d_S\Delta -1)}S(\cdot,T_2)+\chi \int_{T_2}^t e^{(t-\tau)(d_S\Delta-1)}\nabla \cdot \left(S(\cdot,\tau)\nabla I(\cdot,\tau) \right)d\tau\\
&\quad +\int_{T_2}^t e^{(t-\tau)(d_S\Delta-1)}\left(S-\beta\frac{S}{S+I}I+\gamma I \right)(\cdot,\tau)d\tau,\quad \forall t>T_2,
\end{align*}
and proceeding in the same fashion as we did to obtain  \eqref{S-bound}, we can conclude that actually $\|S(\cdot,t)\|_{L^\infty(\Omega)}$ can be bounded by a positive constant independent of the initial data for $t$ sufficiently large. This proves the theorem for model \eqref{SIS-1}.

 Armed with the uniform $W^{1,\infty}$-bound for $I$ as obtained in \eqref{I-bdd} and \eqref{grad-I-bdd} and the uniform  $L^1$-bound for $S+I$ in \eqref{L1-bdd2},   with minor  modifications,  we can repeat the proceeding arguments for \eqref{SIS-1} to derive the assertions of the theorem for \eqref{SIS-2}.
\end{proof}

\section{Threshold Dynamics in terms of $\mathcal{R}_0$}

In this section, we aim to study the global asymptotic stability of nonnegative steady states of \eqref{SIS-1} with the  constraint \eqref{L1-bdd}.
It is straightforward to see that the DFE exists uniquely and is given by
$$(\tilde S,0)= \left(\frac{N}{|\Omega|},0\right).$$

For our model \eqref{SIS-1}, as in \cite{Allen}, we define the basic reproduction number by
\be\label{def-R0}\mathcal{R}_0=\sup_{0\neq \varphi\in H^1(\Omega)} \frac{\int_\Omega \beta \varphi^2 dx}{\int_\Omega\left(d_I|\nabla \varphi|^2+\gamma \varphi^2\right)dx }.\ee
Indeed, one can follow the idea of next generation operators in \cite{Peng-Zhao-2012} to introduce the basic reproduction number,
which coincides with $\mathcal{R}_0$. Observe that $\mathcal{R}_0$ is independent of the diffusion rate $d_S>0$.

\begin{proposition}\label{loc-sta}
The DFE is stable if $\mathcal{R}_0<1$, and it is unstable if $\mathcal{R}_0>1.$
\end{proposition}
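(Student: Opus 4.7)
The plan is to linearize \eqref{SIS-1} around $(\tilde S, 0)$ and read off the sign of the principal eigenvalue of the resulting linear operator, exploiting that the cross-diffusion term, while affecting the $S$-equation, does not feed back into the $I$-equation.

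Writing $S = \tilde S + \eta$ and $I = \xi$ with small $(\eta,\xi)$, I use the Taylor expansion of $\beta SI/(S+I)$ at $(\tilde S,0)$: since $\partial_S[SI/(S+I)]|_{I=0}=0$ and $\partial_I[SI/(S+I)]|_{I=0}=1$, the reaction linearizes to $\beta\xi$. Because $\tilde S$ is constant, $\chi\nabla\cdot((\tilde S + \eta)\nabla\xi) = \chi\tilde S\Delta\xi + \chi\nabla\cdot(\eta\nabla\xi)$, and the last term is quadratic. The linearized system is therefore
$$
\eta_t = d_S\Delta\eta + \chi\tilde S\Delta\xi + (\gamma - \beta)\xi,\qquad \xi_t = d_I\Delta\xi + (\beta - \gamma)\xi,
$$
with homogeneous Neumann conditions. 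Crucially the $\xi$-equation decouples from $\eta$ and is independent of $\chi$.

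The resulting linear operator is block upper-triangular, so its spectrum is the union of the spectra of the two diagonal blocks: the operator $L\xi := d_I\Delta\xi + (\beta-\gamma)\xi$ acting on $\xi$, and $d_S\Delta$ acting on $\eta$, both under Neumann BC. Working on the conservation-compatible tangent space $\{\int_\Omega(\eta+\xi)\,dx = 0\}$ dictated by \eqref{L1-bdd} removes the zero mode of $d_S\Delta$ corresponding to constants, so the $\eta$-block contributes only strictly negative eigenvalues. Hence the stability of $(\tilde S, 0)$ is governed entirely by the principal eigenvalue $\mu$ of $L$. Its Rayleigh characterization
$$
\mu = \sup_{0\neq\varphi\in H^1(\Omega)}\frac{\int_\Omega[(\beta-\gamma)\varphi^2 - d_I|\nabla\varphi|^2]\,dx}{\int_\Omega\varphi^2\,dx}
$$
shows that $\mu$ and $\mathcal{R}_0 - 1$ share the same sign: $\mu>0$ iff some $\varphi$ satisfies $\int_\Omega\beta\varphi^2 > \int_\Omega(d_I|\nabla\varphi|^2 + \gamma\varphi^2)$, which is exactly $\mathcal{R}_0 > 1$, and analogously for strict negativity. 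The principle of linearized stability then gives stability of the DFE when $\mathcal{R}_0 < 1$ and instability when $\mathcal{R}_0 > 1$.

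The only genuinely subtle point is the term $\chi\tilde S\Delta\xi$ in the $\eta$-block: a priori one might worry that this second-order coupling obstructs the triangular-spectrum argument. It does not, because the coupling is one-way ($\xi$ drives $\eta$, but $\eta$ never re-enters the $\xi$-equation), so the block upper-triangular structure persists and the spectrum splits as claimed. Some care must be taken to work on the mass-constrained perturbation space so that the constant zero mode of $d_S\Delta$ is excluded; this is why $\chi$ does not appear anywhere in the threshold condition, which coincides with the one obtained for the no-cross-diffusion model \eqref{AllenSIS} in \cite{Allen}.
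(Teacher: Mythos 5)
Your argument is correct and is essentially the proof the paper relies on: the paper omits details and refers to \cite[Lemma 2.4]{Allen}, which is precisely this linearization argument --- the $\xi$-equation decouples, so stability is governed by the principal eigenvalue of $d_I\Delta+(\beta-\gamma)$ under Neumann conditions, whose sign agrees with that of $\mathcal{R}_0-1$ via the Rayleigh quotient. Your extra care with the one-way second-order coupling $\chi\tilde S\Delta\xi$ and with the mass-constrained perturbation space correctly explains why $\chi$ does not enter the threshold.
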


The proof of Proposition \ref{loc-sta} is the same as that of \cite[Lemma 2.4]{Allen} and so the details are omitted here.
In addition, the following qualitative properties of $\mathcal{R}_0$ were also established in \cite{Allen}.

\begin{proposition}\label{property} The following assertions hold.

\begin{enumerate}

 \item[\rm{(a)}] $\mathcal{R}_0$ is a monotone decreasing function of $d_I$
with $\mathcal{R}_0\rightarrow\max\{\beta(x)/{\gamma(x)}:\
x\in\overline\Omega\}$ as $d_I\rightarrow0$ and
$\mathcal{R}_0\rightarrow\int_\Omega\beta/{\int_\Omega\gamma}$ as
$d_I\rightarrow\infty$;

 \item[\rm{(b)}] if $\int_\Omega \beta(x)\mbox{d}x<\int_\Omega \gamma(x)\mbox{d}x$, then there exists a
threshold value $d_I^*\in(0,\infty)$ such that $\mathcal{R}_0>1$ for
$d_I<d_I^*$ and $\mathcal{R}_0<1$ for $d_I>d_I^*$;

 \item[\rm{(c)}] if $\int_\Omega \beta(x)\mbox{d}x\geq\int_\Omega \gamma(x)\mbox{d}x$, then $\mathcal{R}_0>1$ for all $d_I$.

\end{enumerate}
\end{proposition}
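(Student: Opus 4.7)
The proof rests almost entirely on exploiting the Rayleigh quotient characterization \eqref{def-R0}, so the plan is to extract (a) by direct variational manipulation and then deduce (b) and (c) from (a).

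For the monotonicity in (a), I would observe that, for fixed $\varphi\in H^1(\Omega)\setminus\{0\}$, the functional
\[
J_{d_I}(\varphi)=\frac{\int_\Omega \beta\varphi^2\,dx}{\int_\Omega \bigl(d_I|\nabla\varphi|^2+\gamma\varphi^2\bigr)dx}
\]
is nonincreasing in $d_I$, and strictly decreasing whenever $\nabla\varphi\not\equiv 0$. Taking the supremum preserves monotonicity; strict monotonicity follows from the standard fact that the supremum is attained by a positive principal eigenfunction $\varphi^*$ of $-d_I\Delta\varphi+\gamma\varphi=\mathcal{R}_0^{-1}\beta\varphi$ with Neumann data, and this $\varphi^*$ is nonconstant whenever $\beta/\gamma\not\equiv\mathrm{const}$.

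For the two limits in (a), the $d_I\to 0$ case decomposes into an easy upper bound and a concentration-type lower bound. The upper bound is the pointwise inequality
\[
J_{d_I}(\varphi)\le \frac{\int_\Omega \beta\varphi^2}{\int_\Omega \gamma\varphi^2}\le \max_{\overline\Omega}\frac{\beta}{\gamma},
\]
valid for every $d_I>0$. For the matching lower bound, I would pick $x_0\in\overline\Omega$ realizing $\max(\beta/\gamma)$ and, for each small $\varepsilon>0$, insert a standard bump $\varphi_\varepsilon$ of $H^1$-norm of order one and supported in $B_\varepsilon(x_0)\cap\Omega$; continuity of $\beta,\gamma$ gives $\int\beta\varphi_\varepsilon^2/\int\gamma\varphi_\varepsilon^2\to\beta(x_0)/\gamma(x_0)$, while $\int|\nabla\varphi_\varepsilon|^2$ is controlled and hence $d_I\int|\nabla\varphi_\varepsilon|^2\to 0$ as $d_I\to 0$. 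For $d_I\to\infty$, the constant test function $\varphi\equiv 1$ immediately yields $\mathcal{R}_0\ge \int_\Omega\beta/\int_\Omega\gamma$; for the reverse inequality I would split a general $\varphi$ as $\varphi=\bar\varphi+\psi$ with $\bar\varphi=|\Omega|^{-1}\int_\Omega\varphi$ and $\int_\Omega\psi=0$, use the Poincar\'e inequality $\int\psi^2\le \lambda_2^{-1}\int|\nabla\psi|^2$ in the nonconstant part, and verify that the contribution from $\psi$ is $O(d_I^{-1})$ relative to the constant mode, so that $\limsup_{d_I\to\infty}\mathcal{R}_0\le \int_\Omega\beta/\int_\Omega\gamma$. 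This last step — estimating the cross terms $\int\beta\bar\varphi\psi$ and $\int\gamma\bar\varphi\psi$ uniformly while retaining the gain from $d_I$ — is where I expect the main technical bookkeeping, though it is ultimately routine.

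With (a) in hand, statements (b) and (c) fall out quickly. Continuity of $\mathcal{R}_0$ in $d_I$ is a byproduct of its monotonicity combined with its variational representation (sup of continuous functions of $d_I$). In case (b), the limits in (a) straddle $1$, namely $\mathcal{R}_0\to \max(\beta/\gamma)>1$ as $d_I\to 0$ (because $\int\beta<\int\gamma$ forces $\beta<\gamma$ to be violated somewhere is not needed; instead $\max \beta/\gamma\ge \int\beta/\int\gamma$ is used only for the large-$d_I$ case — actually one uses that $\max(\beta/\gamma)\ge 1$ is automatic since otherwise $\int\beta<\int\gamma$ would hold with strict inequality everywhere consistent with $\max \beta/\gamma<1$, making $\mathcal{R}_0<1$ for all $d_I$; the hypothesis $\beta,\gamma$ positive H\"older continuous and the definition of $\mathcal{R}_0$ together with $\int\beta<\int\gamma$ give $\mathcal{R}_0\to\int\beta/\int\gamma<1$ as $d_I\to\infty$). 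Strict monotonicity together with the intermediate value theorem then produces a unique threshold $d_I^*$. In case (c), the large-$d_I$ limit $\int\beta/\int\gamma\ge 1$ and the strict monotonicity yield $\mathcal{R}_0>\int\beta/\int\gamma\ge 1$ for every finite $d_I$, with the borderline case $\beta\equiv\gamma$ excluded (or handled separately) since otherwise one chooses a zero-mean perturbation $\psi$ with $\int(\beta-\gamma)\psi\neq 0$ to boost the quotient above one for small $\epsilon$.
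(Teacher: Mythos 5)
The paper does not prove this proposition at all: it is quoted from Allen--Bolker--Lou--Nevai and attributed to \cite{Allen}, so there is no internal proof to compare against. Your variational strategy --- monotonicity of the Rayleigh quotient in $d_I$, a concentration argument near a maximizer of $\beta/\gamma$ for $d_I\to 0$, splitting off the spatial mean with a Poincar\'e estimate for $d_I\to\infty$, then the intermediate value theorem --- is the standard route and is essentially the argument of \cite{Allen}. Parts (a) and the perturbation trick in (c) (testing with $1+\epsilon\psi$, $\int_\Omega(\beta-\gamma)\psi\neq 0$, when $\int_\Omega\beta=\int_\Omega\gamma$) are sound modulo routine bookkeeping.

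The genuine gap is in (b). Your intermediate value argument needs the two limits from (a) to straddle $1$, i.e.\ it needs $\max_{\overline\Omega}(\beta/\gamma)>1$, and this does \emph{not} follow from $\int_\Omega\beta<\int_\Omega\gamma$: if $\beta<\gamma$ everywhere on $\overline\Omega$, the integral hypothesis holds, yet $\mathcal{R}_0\le\max_{\overline\Omega}(\beta/\gamma)<1$ for every $d_I$ and no threshold $d_I^*$ exists. Your parenthetical claim that ``$\max(\beta/\gamma)\ge 1$ is automatic'' is circular --- the ``otherwise'' branch you describe is a consistent scenario, not a contradiction --- and in any case $\max(\beta/\gamma)=1$ would still give $\mathcal{R}_0\le1$ for all $d_I$. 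What is actually required is the standing hypothesis of \cite{Allen} that the high-risk set $\{x\in\overline\Omega:\beta(x)>\gamma(x)\}$ is nonempty; the present paper omits restating it, but your proof must invoke it explicitly (the same hypothesis is what rules out $\beta\equiv\gamma$ in (c)). A secondary, fixable slip: a supremum of continuous functions of $d_I$ is only lower semicontinuous, so ``continuity is a byproduct of the variational representation'' is not an argument as written; instead note that each fixed-$\varphi$ quotient is convex in $d_I$, so $\mathcal{R}_0$ is convex and hence continuous on $(0,\infty)$, or appeal to continuous dependence of the principal eigenvalue of \eqref{eigen-prob} on $d_I$.
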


Furthermore, it was shown in \cite{Allen} that $1-\mathcal R_0$ has the same sign as $\lambda^*$ with $\lambda^*$ being the principal eigenvalue of the following eigenvalue problem.
\begin{equation}
\left\{ \begin{array}{ll}
\displaystyle -d_I\Delta \phi+(\gamma-\beta)\phi=\lambda \phi,&x\in\Omega,\\
\displaystyle \frac{\partial \phi}{\partial\nu}=0,&x\in\partial\Omega.
\end{array}\right.
\label{eigen-prob}
\end{equation}

Notice that the $I$-equation verifies
\be\label{I-com-ee}
I_t=d_I\Delta I+\beta\frac{SI}{S+I}-\gamma I\leq d_I\Delta I+(\beta-\gamma)I,\quad\quad x\in\Omega,t>0.
\ee
Based on \eqref{eigen-prob} and \eqref{I-com-ee}, we first have a simple observation, when $\mathcal R_0<1$, that  $I(x,t)$ decays to $0$  exponentially as $t\to\infty$. In fact, in such case,  the principal eigenvalue $\lambda^*$ corresponding to \eqref{eigen-prob} is positive. Let $\phi^*>0$ be the principal eigenfunction and let $M$  be a positive constant such that $I_0(x)\leq M \phi^*(x)$ for $x\in\Omega$.  Then a direct application from \eqref{eigen-prob} and \eqref{I-com-ee} shows that $Me^{-\lambda^* t}\phi^*(x)$ is a super-solution to the $I$-equation and hence  the comparison principle yields that
\begin{equation}
I(x,t)\leq Me^{-\lambda^* t}\phi^*(x),\quad\quad x\in\Omega,t>0.
\label{I-decay}
\end{equation}
In the sequel, we shall employ this important information to derive the global asymptotic stability of DFE under the assumption that $\mathcal R_0<1$, and this is achieved through a chain of simple lemmas.

For convenience, we set $w(x,t)=S(x,t)+I(x,t)$ for $x\in\overline{\Omega}$ and $t\geq 0$. Then it can be readily checked from the IBVP \eqref{SIS-1} that $w$ satisfies
\begin{equation}
\left\{ \begin{array}{lll}
\displaystyle w_t=d_S\Delta w+\nabla \cdot \left(\left(d_I-d_S\right)\nabla I+\chi S\nabla I \right),&x\in\Omega, t>0,\\
\displaystyle \frac{\partial w}{\partial \nu}=0,&x\in\partial\Omega, t>0,\\
\displaystyle w(x,0)=S_0(x)+I_0(x),&x\in\Omega.
\end{array}\right.
\label{w}
\end{equation}
 The following result serves as a key starting point towards our stabilization analysis for the DFE $(\tilde S,0)$.
\begin{lemma}
\label{w-decay}
 The ansatz  $w$  satisfies the property  that
\begin{equation}
\int_0^\infty \int_\Omega |\nabla w|^2dxdt<\infty.
\label{nabla-w}
\end{equation}
\end{lemma}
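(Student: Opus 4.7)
The plan is to obtain \eqref{nabla-w} via two standard $L^2$ energy estimates, using the exponential decay of $I$ guaranteed by \eqref{I-decay} together with the uniform $L^\infty$ bounds for $S$ and $\nabla I$ from Theorem \ref{glo-sol}.

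First, I would exploit the $I$-equation to prove that $\int_0^\infty \|\nabla I(\cdot,t)\|_{L^2(\Omega)}^2 \, dt$ is finite. Testing the second equation in \eqref{SIS-1} against $I$ and integrating by parts under the Neumann condition yields
\begin{equation*}
\frac{1}{2}\frac{d}{dt}\int_\Omega I^2\,dx + d_I\int_\Omega |\nabla I|^2\,dx = \int_\Omega \beta \frac{SI^2}{S+I}\,dx - \int_\Omega \gamma I^2\,dx \leq \beta^*\int_\Omega I^2\,dx,
\end{equation*}
since $S/(S+I)\le 1$. Integrating on $(0,T)$ and then invoking \eqref{I-decay} to bound $\int_0^\infty \|I(\cdot,t)\|_{L^2(\Omega)}^2\,dt \le M^2\|\phi^*\|_{L^2(\Omega)}^2/(2\lambda^*) < \infty$, I send $T\to\infty$ to conclude
\begin{equation*}
\int_0^\infty\int_\Omega |\nabla I|^2\,dx\,dt \le \frac{1}{2d_I}\|I_0\|_{L^2(\Omega)}^2 + \frac{\beta^*}{d_I}\int_0^\infty\|I\|_{L^2(\Omega)}^2\,dt < \infty.
\end{equation*}

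Second, I would test \eqref{w} against $w$ itself. Integration by parts gives
\begin{equation*}
\frac{1}{2}\frac{d}{dt}\int_\Omega w^2\,dx + d_S\int_\Omega |\nabla w|^2\,dx = -\int_\Omega \bigl(d_I-d_S+\chi S\bigr)\nabla I\cdot\nabla w\,dx.
\end{equation*}
Since $\|S(\cdot,t)\|_{L^\infty(\Omega)}\le M$ by Theorem \ref{glo-sol}, Young's inequality produces
\begin{equation*}
\frac{1}{2}\frac{d}{dt}\int_\Omega w^2\,dx + \frac{d_S}{2}\int_\Omega |\nabla w|^2\,dx \le \frac{(|d_I-d_S|+\chi M)^2}{2d_S}\int_\Omega |\nabla I|^2\,dx.
\end{equation*}
Integrating on $(0,T)$, dropping the nonnegative $\|w(\cdot,T)\|_{L^2}^2$ term, and combining with the previous step allows me to send $T\to\infty$ and arrive at \eqref{nabla-w}.

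There is no genuine obstacle here; the only point requiring care is justifying that the cross-diffusive flux $(d_I-d_S+\chi S)\nabla I$ can be absorbed into the dissipation $d_S\|\nabla w\|_{L^2}^2$, which works precisely because $S$ is uniformly bounded (independently of $t$) and $\nabla I$ is square-integrable in space-time thanks to the exponential decay of $I$ established in \eqref{I-decay}. This is where the hypothesis $\mathcal R_0<1$ (implicit through \eqref{I-decay}) is essential.
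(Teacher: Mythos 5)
Your proposal is correct, and its first half (testing the $I$-equation against $I$ and using the exponential decay \eqref{I-decay} to conclude $\int_0^\infty\int_\Omega|\nabla I|^2\,dx\,dt<\infty$) coincides with the paper's argument. The second half takes a slightly different route: you test the combined equation \eqref{w} against $w$ and absorb the flux $(d_I-d_S+\chi S)\nabla I$ into the dissipation $d_S\|\nabla w\|_{L^2}^2$ by Young's inequality, whereas the paper tests the $S$-equation against $S$, obtains $\int_0^\infty\int_\Omega|\nabla S|^2\,dx\,dt<\infty$ separately, and then concludes via $|\nabla w|^2\le 2(|\nabla S|^2+|\nabla I|^2)$. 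Your variant is marginally cleaner because the reaction terms cancel identically in the $w$-equation, so you never need to re-invoke \eqref{I-decay} to control $\int_0^T\int_\Omega I\,dx\,dt$ as the paper does when handling $\int_\Omega\gamma SI\,dx$; the only inputs are the uniform $L^\infty$ bound on $S$ from Theorem \ref{glo-sol} and the space-time square-integrability of $\nabla I$. The paper's version yields the additional information $\int_0^\infty\int_\Omega|\nabla S|^2\,dx\,dt<\infty$, though this is not needed in the subsequent lemmas. One cosmetic remark: in your opening sentence you cite a uniform $L^\infty$ bound on $\nabla I$ from Theorem \ref{glo-sol}, but your argument never actually uses it (and the theorem statement records only the $L^\infty$ bounds on $S$ and $I$; the gradient bound appears inside its proof), so that clause should simply be dropped.
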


\begin{proof}
Multiplying the $I$-equation by $I$ and integrating over $\Omega$, we are led to
\begin{eqnarray}
\frac{1}{2}\frac{d}{dt}\int_\Omega I^2dx=-d_I\int_\Omega |\nabla I|^2dx+\int_\Omega \left(\beta\frac{SI}{S+I}-\gamma I\right)I dx.
\nonumber
\end{eqnarray}
Thanks to \eqref{I-decay}, we have
\begin{align}
 d_I\int_0^T\int_\Omega |\nabla I|^2dxdt &=\frac{1}{2}\int_\Omega I_0^2dx-\frac{1}{2}\int_\Omega I^2(\cdot,T)dx+\int_0^T\int_\Omega \left(\beta \frac{S}{S+I}-\gamma \right)I^2dxdt\nonumber\\
&  \leq  \frac{1}{2}\int_\Omega I_0^2dx+\beta^* \int_0^T\int_\Omega \left(Me^{-\lambda^*t}\phi^*(x) \right)^2dxdt\nonumber\\
&  \leq C_1,\quad \forall T>0,
\label{nabla-I}
\end{align}
for some positive constant $C_1$ independent of $T>0$. Similarly, from the $S$-equation and the fact that $S$ and $I$ are bounded, we deduce
\begin{align}
 \frac{1}{2}\frac{d}{dt}\int_\Omega S^2dx &=-d_S\int_\Omega |\nabla S|^2dx-\chi \int_\Omega S\nabla I\cdot\nabla Sdx-\int_\Omega\beta\frac{S^2I}{S+I}dx+\int_\Omega \gamma SIdx\nonumber\\
&  \leq -d_S\int_\Omega |\nabla S|^2dx+\frac{d_S}{2}\int_\Omega |\nabla S|^2dx+\frac{\chi^2}{2d_S}\int_\Omega S^2|\nabla I|^2dx\nonumber\\
&\ \quad +\gamma^*\|S(\cdot,t)\|_{L^\infty(\Omega)}\int_\Omega Idx\nonumber\\
&  \leq -\frac{d_S}{2}\int_\Omega |\nabla S|^2dx+\frac{\chi^2}{2d_S}\|S(\cdot,t)\|_{L^\infty(\Omega)}^2\int_\Omega |\nabla I|^2dx\nonumber\\
&\ \quad +\gamma^*\|S(\cdot,t)\|_{L^\infty(\Omega)}\int_\Omega Idx.\nonumber
\end{align}
As a result,
\begin{align}
 d_S\int_0^T\int_\Omega |\nabla S|^2dxdt & \leq \int_\Omega S_0^2dx+C_2\int_0^T\int_\Omega |\nabla I|^2dxdt+C_2\int_0^T\int_\Omega Idxdt\nonumber\\
& \leq C_3,\quad \forall T>0,
\label{nabla-S}
\end{align}
due  to \eqref{nabla-I} and \eqref{I-decay}. Clearly, by the definition of $w$, the desired estimate \eqref{nabla-w} follows from \eqref{nabla-I} and \eqref{nabla-S}.
\end{proof}

With the aid of the decaying property  \eqref{nabla-w}, we next show  that $w_t$ decays at least in the dual space of $H^1(\Omega)$ in the large time limit.
\begin{lemma}
\label{wt-decay}
The solution $w$ of \eqref{w} satisfies
$$\int_0^\infty \left\|w_t(\cdot,t) \right\|_{\left(H^1(\Omega)\right)^*}^2 dt<\infty.$$
\end{lemma}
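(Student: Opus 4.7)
The plan is to test the $w$-equation \eqref{w} against an arbitrary $\varphi \in H^1(\Omega)$, transfer the spatial derivatives onto $\varphi$ by integration by parts (the Neumann boundary data on both $w$ and $I$ kill all boundary terms), and then pass to the dual norm by taking the supremum over $\varphi$ with $\|\varphi\|_{H^1(\Omega)} \leq 1$. In this way the pointwise-in-$t$ size of $\|w_t(\cdot,t)\|_{(H^1(\Omega))^*}$ is controlled by $\|\nabla w(\cdot,t)\|_{L^2(\Omega)}$ and $\|\nabla I(\cdot,t)\|_{L^2(\Omega)}$, both of which have already been shown to be square-integrable in time.

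First I would write, for every admissible $\varphi$,
$$\int_\Omega w_t \varphi \, dx = -\int_\Omega \bigl(d_S \nabla w + (d_I - d_S)\nabla I + \chi S \nabla I\bigr) \cdot \nabla \varphi \, dx,$$
then invoke Cauchy--Schwarz together with the uniform $L^\infty$-bound $\|S(\cdot,t)\|_{L^\infty(\Omega)} \leq M$ supplied by Theorem \ref{glo-sol} to handle the factor $\chi S$. Since $\|\nabla \varphi\|_{L^2(\Omega)} \leq \|\varphi\|_{H^1(\Omega)} \leq 1$, taking the supremum gives a pointwise bound of the form
$$\|w_t(\cdot,t)\|_{(H^1(\Omega))^*} \leq d_S \|\nabla w(\cdot,t)\|_{L^2(\Omega)} + C \|\nabla I(\cdot,t)\|_{L^2(\Omega)},$$
with a constant $C$ depending only on $d_S$, $d_I$, $\chi$ and $M$ but not on $t$.

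Finally I would square this inequality, integrate over $(0,\infty)$, and invoke the two $L^2_tL^2_x$-controls already at our disposal: the bound on $\int_0^\infty \|\nabla w\|_{L^2(\Omega)}^2 dt$ is precisely the content of Lemma \ref{w-decay}, while the bound on $\int_0^\infty \|\nabla I\|_{L^2(\Omega)}^2 dt$ is the intermediate estimate \eqref{nabla-I} established inside its proof. Combining the two gives the desired conclusion. I do not anticipate a genuine obstacle: all the analytic work — in particular the use of the exponential decay \eqref{I-decay} to close the $\nabla I$ and $\nabla S$ integrals — was already absorbed into Lemma \ref{w-decay}, and the present statement is essentially a repackaging of that information in the dual norm.
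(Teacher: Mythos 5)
Your proposal is correct and follows essentially the same route as the paper: test the $w$-equation against $\varphi\in H^1(\Omega)$, integrate by parts, use Cauchy--Schwarz and the uniform $L^\infty$-bound on $S$ to obtain the pointwise dual-norm estimate in terms of $\|\nabla w\|_{L^2(\Omega)}$ and $\|\nabla I\|_{L^2(\Omega)}$, then square and integrate using Lemma \ref{w-decay} and \eqref{nabla-I}. No gaps.
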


\begin{proof}
For any test function $\varphi\in H^1(\Omega)$, we obtain from \eqref{w} that
\be
\nonumber
\begin{split}
\int_\Omega w_t\varphi dx&=-d_S \int_\Omega \nabla w\cdot\nabla \varphi dx-\int_\Omega \left[(d_I-d_S)\nabla I+\chi S\nabla I\right]\cdot \nabla \varphi dx\\
&\leq d_S \|\nabla w(\cdot,t)\|_{L^2(\Omega)}  \|\nabla \varphi \|_{L^2(\Omega)} \\
&\quad + \left[(d_I+d_S) \|\nabla I(\cdot,t)\|_{L^2(\Omega)}+\chi \|S(\cdot,t)\|_{L^\infty(\Omega)} \|\nabla I(\cdot,t)\|_{L^2(\Omega)} \right] \|\nabla \varphi \|_{L^2(\Omega)}\\[0.25cm]
&\leq \left[d_S \|\nabla w(\cdot,t)\|_{L^2(\Omega)} +\left(d_I+d_S+ \chi \|S(\cdot,t)\|_{L^\infty(\Omega)}\right) \|\nabla I(\cdot,t)\|_{L^2(\Omega)} \right]\| \varphi \|_{H^1(\Omega)}.
\end{split}
\ee
This shows precisely that
\be
\left\| w_t(\cdot,t)\right\|_{\left(H^1(\Omega)\right)^*}\leq d_S \|\nabla w(\cdot,t)\|_{L^2(\Omega)}+\left(d_I+d_S+ \chi \|S(\cdot,t)\|_{L^\infty(\Omega)}\right) \|\nabla I(\cdot,t)\|_{L^2(\Omega)}.
\nonumber
\ee
As a consequence of Theorem \ref{glo-sol}, Lemma \ref{w-decay} and \eqref{nabla-I}, we deduce
$$
\int_0^\infty \left\|w_t(\cdot,t) \right\|_{\left(H^1(\Omega)\right)^*}^2 dt\leq C \int_0^\infty \left\|\nabla w(\cdot,t) \right\|_{L^2(\Omega)}^2 dt + C \int_0^\infty \left\|\nabla I(\cdot,t) \right\|_{L^2(\Omega)}^2 dt<\infty.
$$
This completes the proof of the lemma.
\end{proof}

With the help of Lemmas \ref{w-decay} and \ref{wt-decay}, using a somewhat standard argument as in \cite{TW12-JDE} and \cite{Win14}, we now can establish the global stability of DFE.

\begin{theorem}
\label{glo-sta-1}
If the basic reproduction number $\mathcal R_0<1$, then the unique global-in-time  bounded and classical solution $(S, I)$  of \eqref{SIS-1} satisfies $(S,I)\to \left(\frac{N}{|\Omega|},0\right)$ in $C(\overline\Omega)$ as $t\to\infty.$
\end{theorem}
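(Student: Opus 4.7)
The plan is to decouple the two components. Since $\mathcal R_0<1$ is equivalent to $\lambda^*>0$, the super-solution bound \eqref{I-decay} yields $\|I(\cdot,t)\|_{L^\infty(\Omega)}\to 0$ exponentially as $t\to\infty$. Hence the theorem reduces to showing that $w:=S+I$ tends to $N/|\Omega|$ in $C(\overline\Omega)$, since then $S=w-I$ does the same.

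To control $w$ I would first establish a uniform-in-time parabolic H\"older estimate. The $w$-equation \eqref{w} has the divergence form $w_t=d_S\Delta w+\nabla\cdot g$ with $g:=(d_I-d_S)\nabla I+\chi S\nabla I$, subject to homogeneous Neumann boundary data. Theorem \ref{glo-sol} and estimate \eqref{grad-I-bdd} bound $\|S(\cdot,t)\|_{L^\infty(\Omega)}$ and $\|\nabla I(\cdot,t)\|_{L^\infty(\Omega)}$ uniformly in $t>0$, so $g\in L^\infty(\overline\Omega\times(0,\infty))$. The De Giorgi--Nash--Moser theory for divergence-form parabolic equations with bounded coefficients and bounded flux then yields some $\alpha\in(0,1)$ and $C>0$ such that
\[
\|w\|_{C^{\alpha,\alpha/2}(\overline\Omega\times[t,t+1])}\leq C\qquad\text{for all }t\geq 1.
\]
By Arzel\`a--Ascoli, the trajectory $\{w(\cdot,t)\}_{t\geq 1}$ is therefore relatively compact in $C(\overline\Omega)$.

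With this compactness in hand, I would argue by contradiction. Suppose there exist $\varepsilon_0>0$ and $t_k\to\infty$ with $\|w(\cdot,t_k)-N/|\Omega|\|_{L^\infty(\Omega)}\geq\varepsilon_0$. Consider the time shifts $w_k(x,s):=w(x,s+t_k)$ on $\overline\Omega\times[0,1]$. The uniform H\"older estimate transfers to the $w_k$, so by Arzel\`a--Ascoli a subsequence, still labeled $w_k$, converges in $C(\overline\Omega\times[0,1])$ to some $w_\infty$. By Lemma \ref{w-decay},
\[
\int_0^1\!\!\int_\Omega|\nabla w_k|^2\,dx\,ds=\int_{t_k}^{t_k+1}\!\!\int_\Omega|\nabla w|^2\,dx\,dt\longrightarrow 0,
\]
so $\nabla w_k\to 0$ in $L^2((0,1)\times\Omega)$, while $w_k\to w_\infty$ distributionally. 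Hence $\nabla w_\infty\equiv 0$ in $\mathcal D'((0,1)\times\Omega)$, and the continuous function $w_\infty(x,s)=\psi(s)$ depends only on time. Passing to the limit in the conservation law $\int_\Omega w_k(\cdot,s)\,dx=N$ from \eqref{L1-bdd} gives $\psi\equiv N/|\Omega|$. In particular $w(\cdot,t_k)=w_k(\cdot,0)\to N/|\Omega|$ in $C(\overline\Omega)$, contradicting the choice of $t_k$.

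The main technical point is the uniform H\"older estimate in the second step; it rests on the uniform $W^{1,\infty}$-bound on $I$ delivered by the semigroup argument of Section 2, so no additional energy work is needed. Lemma \ref{wt-decay} provides an alternative compactness route via the Lions--Aubin lemma, producing strong $L^2$-convergence of the shifts $w_k$, which can then be upgraded to $C(\overline\Omega)$-convergence using the same H\"older bound.
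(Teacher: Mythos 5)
Your argument is correct, and it reaches the conclusion by a slightly different mechanism than the paper. Both proofs share the same skeleton: exponential decay of $I$ via \eqref{I-decay}, reduction to $w=S+I$, the gradient energy bound $\int_0^\infty\int_\Omega|\nabla w|^2<\infty$ of Lemma \ref{w-decay}, H\"older compactness of the orbit, and a contradiction along a sequence $t_k\to\infty$. Where you diverge is in identifying the limit: the paper proves the additional dual-space estimate $\int_0^\infty\|w_t\|_{(H^1(\Omega))^*}^2\,dt<\infty$ (Lemma \ref{wt-decay}) and concludes by matching two $(H^1(\Omega))^*$-limits over the time windows $[t_k,t_k+1]$, whereas you take the time shifts $w_k(\cdot,s)=w(\cdot,s+t_k)$, extract a uniform limit $w_\infty$ on $\overline\Omega\times[0,1]$ from a parabolic $C^{\alpha,\alpha/2}$ bound, kill its spatial gradient using Lemma \ref{w-decay}, and pin down the constant from the conservation law \eqref{L1-bdd}. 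Your route dispenses with Lemma \ref{wt-decay} entirely; the price is that you need equicontinuity in time as well as space, i.e.\ the full space--time H\"older estimate for the divergence-form equation \eqref{w} (the paper only invokes a spatial $C^\alpha(\overline\Omega)$ bound at each fixed time). That estimate is legitimately available: the flux $g=(d_I-d_S)\nabla I+\chi S\nabla I$ is uniformly bounded by Theorem \ref{glo-sol} and \eqref{grad-I-bdd}, and $g\cdot\nu=0$ on $\partial\Omega$, so the conormal Neumann problem falls under the De Giorgi--Nash--Moser theory (indeed the reference the paper cites for H\"older regularity is of exactly this type). On balance your version is marginally more self-contained for this theorem, while the paper's Lemma \ref{wt-decay} is a reusable ingredient that it deploys again in the proof of Theorem \ref{glo-sta-2}.
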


\begin{proof}
We have already shown in \eqref{I-decay} that $I(x,t)\to 0$ uniformly for $x\in\overline\Omega$ as $t\to\infty$. Recall that $w=S+I$, hence, it suffices to prove
\be
\left\| w(\cdot,t)-\frac{N}{|\Omega|}\right\|_{C(\overline\Omega)}\to 0,\ \ \mbox{ as }t\to\infty.
\label{glo-sta1}
\ee

We shall show \eqref{glo-sta1} by a contradiction argument. Suppose otherwise, then there exists a sequence $\{t_k\}_{k\in\mathbb N}$ with $t_k\to \infty$ as $k\to \infty$ such that
\be
\inf_{k\in \mathbb N} \left\| w(\cdot,t_k)-\frac{N}{|\Omega|}\right\|_{C(\overline\Omega)}>0.
\label{glo-sta2}
\ee
Since $\|S(\cdot,t)+I(\cdot,t)\|_{L^\infty(\Omega)}$ is bounded for $t>0$ by Theorem \ref{glo-sol},  the standard parabolic regularity for  bounded solutions of parabolic  equations   (c.f.  \cite{La, PV93-JDE} or \cite[Theorem A2]{BDG81}) applied first to the $I$-equation in \eqref{SIS-1} tells us   that  $I(\cdot,t)$ is bounded in $C^2(\overline{\Omega})$ and then  the H\"{o}lder regularity  \cite[Theorem A1]{BDG81} applied to \eqref{w} (see more detailed reasonings leading to \eqref{reg-SI} below) gives us that  $\|S(\cdot,t)+I(\cdot,t)\|_{C^\alpha(\overline\Omega)}=\|w(\cdot,t)\|_{C^\alpha(\overline\Omega)}$ is bounded for all $t\geq 2$ for some $\alpha\in (0,1)$. An application of the Arzel\`{a}-Ascoli Theorem yields that $\{w(\cdot,t)\}_{t\geq 2}$ is relatively compact in $C(\overline\Omega)$. Thus, we can extract a subsequence, still denoted by $\{t_k\}_{k\in\mathbb N}$, such that
\be
w(\cdot,t_k)\to w_\infty\quad \mbox{in } C(\overline\Omega),\ \ \mbox{ as } k\to\infty
\label{glo-sta3}
\ee
for some $0\leq w_\infty\in C(\overline\Omega)$. First, the weak stabilization of $w_t$ in  Lemma \ref{wt-decay} entails
\begin{align*}
\int_{t_k}^{t_k+1}\left\| w(\cdot,t)-w(\cdot,t_k)\right\|^2_{\left(H^1(\Omega)\right)^*}dt & = \int_{t_k}^{t_k+1}\left\|\int_{t_k}^t w_t(\cdot,s)ds \right\|^2_{\left(H^1(\Omega)\right)^*}dt \\
& \leq \int_{t_k}^{t_k+1} \left(\int_{t_k}^t \|w_t(\cdot,s)\|^2_{\left(H^1(\Omega)\right)^*}ds \right) \cdot (t-t_k) dt  \\
& \leq \int_{t_k}^\infty  \|w_t(\cdot,s)\|^2_{\left(H^1(\Omega)\right)^*}ds\to 0,\ \mbox{ as } k\to\infty.
\end{align*}
 This along with \eqref{glo-sta3} and the continuous  embedding $L^\infty \hookrightarrow (H^1(\Omega))^*$ implies that
\be
\int_{t_k}^{t_k+1}\left\| w(\cdot,t)-w_\infty\right\|^2_{\left(H^1(\Omega)\right)^*}dt\to 0,\ \ \mbox{ as }k\to \infty.
\label{glo-sta4}
\ee
On the other hand, since $L^2(\Omega)\hookrightarrow \left(H^1(\Omega)\right)^*$ and $\int_\Omega w(x,t)dx=N$, we infer from  Lemma \ref{w-decay} and the Poincar\'e inequality  that
$$\int_0^\infty \left\| w(\cdot,t)-\frac{N}{|\Omega|}\right\|^2_{\left(H^1(\Omega)\right)^*}dt\leq C\int_0^\infty \int_\Omega |\nabla w|^2dxdt <\infty, $$
from which it follows
\be
\int_{t_k}^{t_k+1} \left\| w(\cdot,t)-\frac{N}{|\Omega|}\right\|^2_{\left(H^1(\Omega)\right)^*}dt\to 0,\ \ \mbox{ as }k\to\infty.
\label{glo-sta5}
\ee
Combining \eqref{glo-sta4} and \eqref{glo-sta5}, we derive  from the  uniqueness of weak limit that $w_\infty\equiv \frac{N}{|\Omega|}$. While, this is contradictory to \eqref{glo-sta2} and \eqref{glo-sta3}.
\end{proof}

Recall that we are devoted  to the study of the threshold dynamics of \eqref{SIS-1}: when $\mathcal R_0<1$, we know from Theorem \ref{glo-sta-1} that the DFE $(\tilde S,0)$ is globally stable and  it is unstable when $\mathcal R_0>1$ by Proposition \ref{loc-sta}. In the latter case, with the uniform boundedness \eqref{bdd0} at hand, we are going to show  that all the nontrivial solutions of  \eqref{SIS-1} will be attracted by its EE in the case that the rate of disease transmission is proportional to the rate of the disease
recovery, that is,  $\beta(x)=r\gamma (x)$  for some positive constant $r\in (1,\infty)$ and for all $x\in \overline{\Omega}$. In this case, it follows evidently from \eqref{def-R0} that $\mathcal R_0=r$ and so
$$
 r>1\Rightarrow\mathcal R_0>1, \ \ r=1\Rightarrow\mathcal R_0=1 \text{ and } r<1\Rightarrow\mathcal R_0<1.
$$
So far, we have shown  that the unique EE exists if and only if $r > 1$, and $r < 1$ implies DFE is globally stable,  while,  the DFE is neutrally stable for $r = 1$. In the sequel, we shall cope with  the case of $r\geq1$ and aim  to establish  the global attractiveness of EE  for $r>1$ and that of DFE for $r=1$.  If $r>1$, the unique EE exists and is given by
$$
\big(\hat S,\hat I\big)=\left(\frac{1}{r}\frac{ N}{ |\Omega|},\frac{r-1}{r}\frac{N}{|\Omega|}\right).$$
In the case of $r>1$, by constructing a suitable Lyapunov functional, we are able to show the global stability of $(\hat S, \hat I)$ for small \lq\lq chemotactic" sensitivity $\chi$. In the case of $r=1$, upon a careful inspection of the reduced system, the proof of Theorem \ref{glo-sta-1} is adaptable, and so we also have global stability for the unique DFE $(\frac{N}{|\Omega|},0)$.
\begin{theorem}\label{sta-ee}
Assume that $\beta(x)=r\gamma(x)$ for some $r\in [1,\infty)$ and for all $x\in \overline{\Omega}$.
\begin{itemize}
\item[{\rm (i)}] If $r>1,$
then there exists a positive constant $M_0$ depending only on $n,\Omega, \beta,\gamma, d_I$ and $N$ such that whenever $0\leq\chi< \chi_0:=M_0 \sqrt{d_S}$, the unique classical global-in-time  solution $(S,I)$ of \eqref{SIS-1} converges uniformly to the unique EE $\big(\hat S,\hat I\big)$ in the following way:
\be\label{lim-linfty}
\lim_{t\rightarrow \infty}\left(\left\|S(\cdot, t)-\hat S\right\|_{L^\infty(\Omega)}+\left\|I(\cdot, t)-\hat I\right\|_{L^\infty(\Omega)}\right)=0.
\ee
That is, the unique EE $\big(\hat S,\hat I\big)$ of \eqref{SIS-1} is globally stable.
\item[{\rm (ii)}] If $r=1$, then the unique classical global-in-time  solution $(S,I)$ of \eqref{SIS-1} satisfies
 $(S,I)\to \left(\frac{N}{|\Omega|},0\right)$ in $C(\overline\Omega)$ as $t\to\infty.$
That is, the unique DFE $\left(\frac{N}{|\Omega|},0\right)$ of \eqref{SIS-1} is globally stable.
\end{itemize}
\end{theorem}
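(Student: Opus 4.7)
For part (i), my plan is to build a classical logarithmic SIS Lyapunov functional
\[
V(t)=\int_\Omega \Bigl(S-\hat S-\hat S\ln\tfrac{S}{\hat S}\Bigr)dx+\int_\Omega \Bigl(I-\hat I-\hat I\ln\tfrac{I}{\hat I}\Bigr)dx,
\]
which is nonnegative and well defined for $t>0$, since the strong maximum principle (together with \eqref{initial}) gives $S,I>0$ on $\overline\Omega\times(0,\infty)$. I would differentiate $V$ along classical solutions to \eqref{SIS-1}, use the Neumann boundary condition to integrate by parts, and collect contributions source by source: the random diffusions produce the canonical dissipation $-d_S\hat S\int_\Omega|\nabla S|^2/S^2-d_I\hat I\int_\Omega|\nabla I|^2/I^2$, the chemotactic flux produces an indefinite cross term $-\chi\hat S\int_\Omega(\nabla S\cdot\nabla I)/S$, and a reaction contribution $\mathcal{R}(t)$ remains.

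The first routine sub-step is to check that $\mathcal{R}(t)\le 0$. Using $\beta=r\gamma$ and the fact that at the homogeneous EE $r\hat S/(\hat S+\hat I)=1$, i.e.\ $\hat I=(r-1)\hat S$, I expect $\mathcal{R}(t)$ to collapse by direct algebra to
\[
\mathcal{R}(t)=\int_\Omega \gamma\,(\hat SI-\hat IS)\Bigl(\tfrac{r}{S+I}-\tfrac{1}{S}\Bigr)dx=-\int_\Omega \gamma\hat S\,\frac{\bigl((r-1)S-I\bigr)^2}{S(S+I)}dx\le 0,
\]
the key algebraic identity being $(\hat SI-\hat IS)\bigl((r-1)S-I\bigr)=-\hat S\bigl((r-1)S-I\bigr)^2$; this factorization simultaneously supplies the reaction penalty that will eventually force $I=(r-1)S$ at any stationary limit.

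The main obstacle is controlling the indefinite cross-diffusion contribution. The plan is to apply Young's inequality so that half of the $S$-gradient dissipation absorbs it, leaving a residue of the form $\tfrac{\chi^2\hat S}{2d_S}\int_\Omega|\nabla I|^2$. To recast this against the available $\int_\Omega|\nabla I|^2/I^2$ dissipation, I would use the trivial pointwise bound $|\nabla I|^2\le\|I\|_\infty^2\cdot|\nabla I|^2/I^2$ together with the $\chi$-independent, initial-data-independent eventual $L^\infty$-bound $M'$ for $I$ supplied by \eqref{bdd} in Theorem \ref{glo-sol}. Setting $M_0^2:=d_I\hat I/(\hat S(M')^2)$ and requiring $\chi<M_0\sqrt{d_S}$ leaves a strictly positive coefficient in front of $\int_\Omega|\nabla I|^2/I^2$; this is precisely where the scaling $\chi\lesssim\sqrt{d_S}$ in the statement originates, and also why $M_0$ can be made to depend only on $n,\Omega,\beta,\gamma,d_I,N$. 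The resulting strict dissipation inequality
\[
V'(t)\le -c_0\int_\Omega\Bigl(\tfrac{|\nabla S|^2}{S^2}+\tfrac{|\nabla I|^2}{I^2}+\tfrac{((r-1)S-I)^2}{S(S+I)}\Bigr)dx
\]
yields time-integrability of each of the three nonnegative integrands on $(T,\infty)$.

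Convergence would then follow by a LaSalle/precompactness scheme: the $L^\infty$ bound and standard parabolic Schauder/$C^{\alpha}$ estimates from Theorem \ref{glo-sol} make the orbit $\{(S(\cdot,t),I(\cdot,t))\}_{t\ge 1}$ relatively compact in $C(\overline\Omega)^2$, and any subsequential limit $(S_\infty,I_\infty)$ must, by the time-integrability of the three dissipation terms and the $\omega$-limit set structure, consist of constants with $I_\infty=(r-1)S_\infty$; the conservation law \eqref{L1-bdd} forces $S_\infty+I_\infty=N/|\Omega|$, pinning the limit down to $(\hat S,\hat I)$, and uniqueness of the limit upgrades convergence along subsequences to \eqref{lim-linfty}. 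For part (ii) with $r=1$, the reaction in the $I$-equation collapses to the purely dissipative $-\gamma I^2/(S+I)$, so integrating and using $S+I\le M'$ yields $\frac{d}{dt}\int_\Omega I\le-c(\int_\Omega I)^2$, hence $\int_\Omega I=O(1/t)$; combining with parabolic compactness upgrades this to $\|I(\cdot,t)\|_{L^\infty}\to 0$. Testing the $I$-equation against $I$ gives $\int_0^\infty\!\!\int_\Omega|\nabla I|^2<\infty$, and testing the $w$-equation against $w$ (with Young absorbing the cross-diffusion into the $d_S|\nabla w|^2$ term, cf.\ Lemma \ref{w-decay}) gives $\int_0^\infty\!\!\int_\Omega|\nabla w|^2<\infty$; after that, the machinery of Lemmas \ref{w-decay}--\ref{wt-decay} and Theorem \ref{glo-sta-1} adapts verbatim to produce $w\to N/|\Omega|$ in $C(\overline\Omega)$, completing the proof.
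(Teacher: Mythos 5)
Your proposal is correct and follows essentially the same route as the paper: the same logarithmic Lyapunov functional, the same algebraic collapse of the reaction term to $-\int_\Omega \gamma\hat S\,[(r-1)S-I]^2/(S(S+I))\,dx$, the same Young-inequality absorption of the chemotactic cross term against the $\chi$-uniform $L^\infty$-bound on $I$ (which is exactly where $\chi\lesssim\sqrt{d_S}$ originates), and the same integrability-plus-parabolic-regularity conclusion combined with the conservation law. The only (harmless) deviations are in part (ii), where you derive the decay of $I$ from an $L^1$ ODE inequality plus compactness rather than the paper's pointwise comparison $I\le \bar I(t)=O(1/t)$, and test the $w$-equation rather than the $S$-equation to get $\int_0^\infty\int_\Omega|\nabla w|^2<\infty$; both variants work and feed into the machinery of Theorem \ref{glo-sta-1} exactly as in the paper.
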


\begin{proof} (i) We shall use the following Lyapunov functional:
$$V(t):=V(S,I)(t)=\int_\Omega \left[\left(S-\hat S-\hat S\ln \frac{S}{\hat S} \right)+\left(I-\hat I-\hat I\ln \frac{I}{\hat I} \right) \right]dx.$$
Note that, for any $z_0>0$,  the function $f(z)=z-z_0-z_0\ln (\frac{z}{z_0}), z>0$ is strictly decreasing on $(0, z_0)$ and is strictly increasing on $(z_0, \infty)$. Hence, it assumes its global minimum zero at $z=z_0$ and so $V(t)\geq 0$ for all $t\geq 0$ and $V(S,I)=0$ if and only if $(S,I)=(\hat S, \hat I)$.

By \eqref{SIS-1}, we use integration by parts to compute  the time evolution of $V$:
\begin{align}
 \frac{dV(t)}{dt} &=\int_\Omega \frac{S-\hat S}{S}S_tdx+\int_\Omega\frac{I-\hat I}{I}I_tdx \nonumber \\
&  =-d_S\hat S \int_\Omega \frac{|\nabla S|^2}{S^2}dx-\chi\hat S \int_\Omega \frac{\nabla S\cdot \nabla I}{S}dx-d_I\hat I \int_\Omega \frac{|\nabla I|^2}{I^2}dx \nonumber \\
&\ \quad -\int_\Omega \beta(x)I\left(\frac{S}{S+I}-\frac{1}{r}\right)\left(\frac{S-\hat S}{S}-\frac{I-\hat I}{I}\right)dx \nonumber \\
& =-\hat S \int_\Omega \left(\sqrt{d_S}\frac{\nabla S}{S}-\frac{\chi}{2\sqrt{d_S}}\nabla I\right)^2dx- \hat S \int_\Omega \left((r-1)d_I  -\frac{I^2\chi^2}{4d_S}\right)\frac{|\nabla I|^2}{I^2} dx \nonumber \\
& \ \quad -\int_\Omega \frac{\beta(x)SI^2}{(S+I)(\hat S+\hat I)}\left(\frac{\hat I}{I}-\frac{\hat S}{S}\right)^2dx \nonumber \\
&  =- \hat S\int_\Omega \left(d_S - \frac{ I^2\chi^2}{4d_I(r-1)}\right)\frac{|\nabla S|^2}{S^2}dx- \int_\Omega \left(\sqrt{d_I\hat I}\frac{\nabla I}{I}-\frac{\hat{S}I\chi}{2\sqrt{d_I\hat I}\frac{\nabla S}{S}}\right)^2dx \nonumber\\
&\ \quad -\frac{1}{r}\int_\Omega \frac{\beta(x)\hat S}{(S+I)S}\left[(r-1)S-I\right]^2dx,
\label{dVdt}
\end{align}
where we have used the assumption $\beta(x)=r\gamma(x)$ to entail
$$
\frac{1}{r}=\frac{\hat S}{\hat S+\hat I}, \quad \hat I=(r-1)\hat S.
$$
Now, noticing that  $\|I(\cdot,t)\|_{L^\infty}(\leq M_I(n, \Omega, d_I, \gamma_*,\beta^*, S_0, I_0))$ is uniformly bounded  with respect to $\chi$ by \eqref{I-ub}, we  see, if
\be\label{chi-small}
0\leq \chi<\chi_0:=\frac{2}{M_I(n, \Omega, d_I, \gamma_*,\beta^*, S_0, I_0)}\sqrt{(r-1)d_Sd_I},
\ee
then, with the boundedness of $S$ and $I$ as in \eqref{bdd0}, we infer from \eqref{chi-small} and  \eqref{dVdt}  there exists $c_0>0$ such that
\be\label{dVdtf}
\frac{dV(t)}{dt}\leq -c_0\left\{\int_\Omega  |\nabla S|^2 dx+\int_\Omega  |\nabla I|^2  dx+\int_\Omega \Bigr[(r-1)S-I\Bigr]^2dx\right\}.
\ee
Because of $V(t)\geq 0$, an integration of \eqref{dVdtf} from $1$ to $t$ shows
$$
\int_{1}^t\left[\int_\Omega  |\nabla S|^2 dx+\int_\Omega  |\nabla I|^2  dx+\int_\Omega \Bigr[(r-1)S-I\Bigr]^2dx\right]ds\leq \frac{V(1)}{c_0}<\infty, \  \forall t>t_0,
$$
which yields trivially
\be\label{int-finite}
\int_{1}^\infty\left[\int_\Omega  |\nabla S|^2 dx+\int_\Omega  |\nabla I|^2  dx+\int_\Omega \Bigr[(r-1)S-I\Bigr]^2dx\right]ds\leq \frac{V(1)}{c_0}<\infty.
\ee
To proceed further, we need the integrand inside the big square bracket in \eqref{int-finite} to be uniformly bounded and uniformly continuous. For this purpose, we need  further H\"{o}lder type regularity for $S$ and $I$. To achieve this,  we rewrite the $S$-equation  as
 $$
 S_t=\nabla\cdot(A(x,t, \nabla S))+B(x,t)
 $$
 with
 $$
 A(x,t, \xi )=d_S\xi+\chi S(x,t)\nabla I(x,t), \ \ \ \  B(x,t)=-\beta(x) \frac{S(x,t)I(x,t)}{S(x,t)+I(x,t)}+\gamma(x) I(x,t).
 $$
 Then, thanks to the  boundedness information provided in Theorem \ref{glo-sol}, it is an easy matter to check, for some $c_i>0, i=1,2,3$,  that
  \begin{equation*}
  \begin{cases}
  A(x,t,\xi)\cdot \xi\geq \frac{d_S}{2}|\xi|^2-c_1, \ \ \  |A(x,t,\xi)|\leq d_S|\xi|+c_2, \ \  \forall (x,t,\xi)\in\Omega\times (0,\infty)\times \mathbb{R}^n, \\[0.2cm]
  |B(x,t)|\leq c_3, \quad  \forall (x,t,\xi)\in\Omega\times (0,\infty)\times \mathbb{R}^n.
  \end{cases}
  \end{equation*}
 Now,  $S$ and $I$ are bounded in $\Omega\times (0,\infty)$, applying the H\"{o}lder estimates for parabolic  equations (cf. \cite[Theorem 1.3]{PV93-JDE})  and then employing  the  standard parabolic Schauder theory (cf. \cite{Fri-bk, La}), we see  there exist  $\theta\in(0, 1)$ and $C>0$ such that
 \be\label{reg-SI}
 \|S\|_{C^{2+\theta, 1+\frac{\theta}{2}}(\overline{\Omega}\times[t,t+1])}+\|I\|_{C^{2+\theta, 1+\frac{\theta}{2}}(\overline{\Omega}\times[t,t+1])}\leq C, \quad \forall t\geq 1.
 \ee
Thus, the integrand inside the big square bracket in \eqref{int-finite} is uniformly bounded and uniformly continuous, and so \eqref{int-finite} entails
\be\label{lim-t-0}
\lim_{t\rightarrow \infty}\left(\int_\Omega  |\nabla S|^2 dx+\int_\Omega  |\nabla I|^2  dx\right)=0
\ee
and
$$
\lim_{t\rightarrow \infty}\int_\Omega \Bigr[(r-1)S-I\Bigr]^2dx=0.
$$
The latter  along with the conservation of $S+I$ as in \eqref{L1-bdd} gives
\be\label{con-SI}
\lim_{t\rightarrow \infty}\bar{S}:=\lim_{t\rightarrow \infty}\frac{1}{|\Omega|}\int_\Omega   S  dx =\hat S, \ \ \  \  \lim_{t\rightarrow \infty}\bar{I}:=\lim_{t\rightarrow \infty}\frac{1}{|\Omega|}\int_\Omega   I  dx =\hat I.
\ee
 Recalling from the Poincar\'{e} inequality, we have
\begin{align*}
&\int_\Omega \left[\left(S-\hat S\right)^2+\left(I-\hat I\right)^2\right]dx\\
&=\int_\Omega \left[\left(S-\bar S\right)^2 + \left(I-\bar I\right)^2\right]dx+|\Omega|\left[\left(\bar S-\hat S\right)^2+\left(\bar I-\hat I\right)^2\right]\\
&\leq \frac{1}{\lambda_1}\int_\Omega \left( |\nabla S|^2 +  |\nabla I|^2\right)  dx+|\Omega|\left[\left(\bar S-\hat S\right)^2+\left(\bar I-\hat I\right)^2\right].
\end{align*}
This in conjunction with  \eqref{lim-t-0} and \eqref{con-SI} readily shows
 \be\label{lim-l2con}
\lim_{t\rightarrow \infty}\int_\Omega \left[\left(S-\hat S\right)^2+\left(I-\hat I\right)^2\right]dx=0.
 \ee
 Finally,  in view of the Gagliardo-Nirenberg inequality,  we derive from \eqref{reg-SI} that
\begin{align*}
&\ \ \left\|S-\hat S\right\|_{L^\infty(\Omega)}+\left\|I-\hat I\right\|_{L^\infty(\Omega)}\\
&\leq C_{GN}\left(\left\|S-\hat S\right\|_{W^{1,\infty}(\Omega)}^{\frac{n}{n+2}}\left\|S -\hat S \right \|
_{L^2(\Omega)}^{\frac{2}{n+2}}+\left\|I-\hat I \right\|_{W^{1,\infty}(\Omega)}^{\frac{n}{n+2}}\left\|I -\hat I\right\|
_{L^2(\Omega)}^{\frac{2}{n+2}}\right)\\
&\leq C\left(\left\|S-\hat S \right\|
_{L^2(\Omega)}^{\frac{2}{n+2}} +\left\|I-\hat I \right\|
_{L^2(\Omega)}^{\frac{2}{n+2}}\right),
\end{align*}
which coupled with \eqref{lim-l2con} evidently gives rise to \eqref{lim-linfty}.

(ii) Since $\beta(x)=\gamma(x)$, system \eqref{SIS-1} reduces to
\begin{equation}
\label{SIS-1-beta-gamma}
\begin{cases}
\disp S_t=d_S \Delta S+\chi\nabla\cdot( S \nabla I)+\beta(x) \frac{I^2}{S+I},& x\in\Omega, t>0,\\
\disp I_t=d_I\Delta I-\beta(x) \frac{I^2}{S+I},& x\in\Omega, t>0,\\
\disp \frac{\partial S}{\partial \nu}=\frac{\partial I}{\partial \nu}=0, & x\in \partial \Omega, t>0,\\
\disp (S(x,0),  I(x,0))=(S_0(x), I_0(x)), & x\in \Omega.
\end{cases}
\end{equation}
Then the boundedness of $S+I$ in  \eqref{bdd0}  shows that
$$ I_t=d_I\Delta I-\beta(x)\frac{I^2}{S+I}\leq d_I\Delta I-\beta_*\frac{I^2}{M}=d_I\Delta I-\delta I^2,$$
where $\beta_*=\min_{\overline\Omega}\beta$ and $\delta=\beta_*/M>0$.
Now,  we consider the following ODE
\begin{equation}
\left\{ \begin{array}{ll}
\bar I' =-\delta \bar I^2,\ \ \ t>0,\\
\bar I(0)=\|I_0\|_{L^\infty(\Omega)}>0.
\end{array}\right.
\nonumber
\end{equation}
Upon an application of the maximum principle and direct calculations, we have
\begin{equation}
I(\cdot,t )\leq \bar I(t)=\frac{1}{1/\bar I(0)+\delta t}\to 0,\ \ \mbox{ as }t\to\infty.
\label{I-beta-gamma}
\end{equation}
To prove the desired convergence of $S$, one can proceed similarly to the proof of Theorem \ref{glo-sta-1}. In fact, as in \eqref{nabla-I}, we first observe that the algebraic decay \eqref{I-beta-gamma} is sufficient for us to infer
\begin{equation}
\int_0^\infty \int_\Omega |\nabla I|^2dxdt<\infty.
\label{nabla-I-beta-gamma}
\end{equation}
On the other hand, using \eqref{SIS-1-beta-gamma}, we calculate
\begin{align*}
\frac{1}{2}\frac{d}{dt}\int_\Omega S^2dx&=-d_S\int_\Omega |\nabla S|^2dx-\chi \int_\Omega S\nabla I\cdot\nabla Sdx+\int_\Omega\beta\frac{SI^2}{S+I}dx\\
& \leq -d_S\int_\Omega |\nabla S|^2dx+\frac{d_S}{2}\int_\Omega |\nabla S|^2dx+\frac{\chi^2}{2d_S}\int_\Omega S^2|\nabla I|^2dx+\beta^*\int_\Omega I^2 dx\\
& \leq -\frac{d_S}{2}\int_\Omega |\nabla S|^2dx+\frac{\chi^2}{2d_S}\|S(\cdot,t)\|_{L^\infty(\Omega)}^2\int_\Omega |\nabla I|^2dx+\beta^*\int_\Omega I^2 dx.
\end{align*}
Then  \eqref{I-beta-gamma} and \eqref{nabla-I-beta-gamma} enable us to conclude that
\begin{equation}
\int_0^\infty \int_\Omega |\nabla S|^2dxdt<\infty.
\nonumber
\end{equation}
 With these key ingredients obtained, the remaining proof follows along the lines of the argument of Theorem \ref{glo-sta-1}, and hence we omit the details.
\end{proof}

For small $\chi>0$, Theorem \ref{sta-ee} conveys to us that system \eqref{SIS-1} is uniformly persistent when $\mathcal R_0>1$. Equipped with the \lq\lq ultimately uniformly boundedness\rq\rq\, \eqref{bdd},
we can indeed adapt the arguments of \cite[Theorem 3.3]{Peng-Zhao-2012}, developed by Magal and Zhao (see \cite[Theorem 4.5]{MZ} or \cite[Chapter 13]{Zhao-bk}), to deduce that system \eqref{SIS-1} is indeed uniformly persistent for any $\chi>0$. Specifically, we have

\begin{theorem}\label{persist thm} Let $(u_0,v_0)$ obey \eqref{initial} and the basic reproduction number $\mathcal R_0>1$. Then system \eqref{SIS-1} is uniformly persistent, i.e., there exists some $\eta>0$, independent of $(u_0,v_0)$, such that
\be
\nonumber
\liminf_{t\rightarrow \infty} S(x,t)\geq \eta \mbox{ and } \liminf_{t\rightarrow \infty} I(x,t)\geq \eta \text{ uniformly for } x\in\overline{\Omega}.
\ee
Furthermore, there exists at least an EE $({S},{I})$ of \eqref{SIS-1} fulfilling
$$\int_\Omega \left[{S}(x)+{I}(x)\right]dx=N.$$
\end{theorem}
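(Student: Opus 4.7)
The plan is to fit system \eqref{SIS-1} into the abstract uniform persistence framework of Magal--Zhao (\cite[Theorem 4.5]{MZ}, \cite[Chapter 13]{Zhao-bk}), following the blueprint already executed in \cite[Theorem 3.3]{Peng-Zhao-2012}. The ``ultimately uniform boundedness'' \eqref{bdd} furnished by Theorem \ref{glo-sol} supplies the prerequisite dissipativity for free; what remains is to verify the abstract persistence hypotheses and to manage the cross-diffusion term carefully in the process.

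First I would set up the semiflow. Fix $p>n$ and take the phase space
\[
X=\{(S_0,I_0)\in C^0(\overline\Omega)\times W^{1,p}(\Omega):\ S_0\geq 0,\ I_0\geq 0\},
\]
so that Theorem \ref{glo-sol} yields a globally defined solution semiflow $\Phi_t:X\to X$. Split $X=X_0\sqcup \partial X_0$ with $X_0=\{I_0\not\equiv 0\}$ and $\partial X_0=\{I_0\equiv 0\}$. The strong maximum principle applied to the $I$-equation shows $X_0$ is positively invariant, while on $\partial X_0$ the system reduces to a heat equation for $S$ under the conservation \eqref{L1-bdd}, giving the unique omega limit $A_\partial:=\{(\tilde S,0)\}$. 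Point dissipativity is immediate from \eqref{bdd}, and the parabolic smoothing already exploited in the proof of Theorem \ref{glo-sol} gives asymptotic compactness of $\Phi_t$; consequently $\Phi_t$ admits a global attractor in $X$.

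The crux will be to verify that $A_\partial$ is a \emph{uniform weak repeller} for $X_0$: that there exists $\varepsilon_0>0$ such that
\[
\limsup_{t\to\infty}\bigl(\|S(\cdot,t)-\tilde S\|_{C(\overline\Omega)}+\|I(\cdot,t)\|_{C(\overline\Omega)}\bigr)\geq \varepsilon_0
\]
for every orbit starting in $X_0$. This is the only place where $\mathcal R_0>1$, equivalently $\lambda^*<0$ in \eqref{eigen-prob}, enters. Arguing by contradiction, if some orbit stays within $\varepsilon$ of $(\tilde S,0)$ for all large $t$, then the coefficient $B=\beta S/(S+I)-\gamma$ appearing in \eqref{I-linear} satisfies $B(x,t)\geq \beta(x)-\gamma(x)-\delta(\varepsilon)$ with $\delta(\varepsilon)\to 0$, and a comparison with the principal eigenpair of a slightly perturbed version of \eqref{eigen-prob} forces $I(\cdot,t)$ to grow exponentially at rate $-\lambda^*+o_\varepsilon(1)>0$, contradicting $\|I(\cdot,t)\|_\infty\leq \varepsilon$. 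Combined with the compactness of $\Phi_t$ and the acyclicity of $A_\partial$, weak repelling upgrades via \cite[Theorem 4.5]{MZ} to strong uniform persistence of the $I$-component; the parabolic Harnack inequality applied to \eqref{I-linear} then promotes this functional bound to the pointwise bound $\liminf_{t\to\infty}\min_{\overline\Omega}I(\cdot,t)\geq \eta_0>0$.

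The lower bound on $S$ follows routinely once $I$ is bounded below: writing the $S$-equation as a linear parabolic equation in $S$ with bounded drift coefficients (the drift $\chi\nabla I$ is bounded by \eqref{grad-I-bdd}) and strictly positive source $\gamma(x)I\geq \gamma_*\eta_0$ for $t\gg 1$, the Harnack inequality and strong maximum principle yield $\liminf_{t\to\infty}\min_{\overline\Omega} S(\cdot,t)\geq \eta$. Finally, the existence of an EE follows from the general fact (\cite[Chapter 13]{Zhao-bk}) that any uniformly persistent compact dissipative semiflow has an equilibrium inside its interior global attractor $A_0\subset X_0$; conservation \eqref{L1-bdd} automatically pins down $\int_\Omega(S+I)=N$ at that equilibrium. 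I expect the main obstacle to be the quantitative perturbation of \eqref{eigen-prob} in the weak-repeller step, since the cross-diffusion term $\chi\nabla\cdot(S\nabla I)$ precludes any naive linearization of \eqref{SIS-1} about the DFE, and one must carefully track how the ratio $S/(S+I)$ degenerates as $I$ becomes small but nonzero.
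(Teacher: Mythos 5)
Your proposal is correct and follows exactly the route the paper takes: the paper does not write out a proof but explicitly reduces the theorem to an adaptation of \cite[Theorem 3.3]{Peng-Zhao-2012} via the Magal--Zhao abstract persistence framework, using the ultimate uniform boundedness \eqref{bdd} for dissipativity, the sign of $\lambda^*$ in \eqref{eigen-prob} for the weak-repeller step at the DFE, and the abstract existence of a steady state in the interior attractor together with the conservation law \eqref{L1-bdd} for the final assertion. Your sketch supplies precisely the details the paper leaves to the cited references, and the one worry you raise (degeneration of $S/(S+I)$) is harmless near the DFE since $S$ stays close to $\tilde S=N/|\Omega|>0$ there.
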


\section{Existence and Uniqueness of EE}

Although Theorem \ref{persist thm} provides us with the existence of EE when $\mathcal R_0>1$, the uniqueness is unclear.
In this section, we shall discuss the existence of EE via a different method. In view of the special reaction terms in system \eqref{SIS-1}, we can reduce the elliptic problem of \eqref{SIS-1} to a single equation, for which the existence of a positive solution can be obtained by a pure PDE approach. Moreover, this technique allows one to deal with the uniqueness and the computations here are also crucial for the forthcoming section where we discuss the asymptotic behavior of EE for small $d_S>0$. Hence, in the following, we focus on the steady state system associated with \eqref{SIS-1}:
\begin{equation}
\left\{ \begin{array}{llll}
\displaystyle d_S\Delta S+\chi \nabla\cdot (S\nabla I)-\beta(x) \frac{SI}{S+I}+\gamma(x) I=0,&x\in\Omega,\\
\displaystyle d_I\Delta I+\beta(x) \frac{SI}{S+I}-\gamma(x) I=0,&x\in\Omega,\\
\displaystyle \frac{\partial S}{\partial \nu}=\frac{\partial I}{\partial \nu}=0,&x\in\partial \Omega,\\
\displaystyle \int_{\Omega}\left[S(x)+I(x)\right]dx=N.
\end{array}\right.
\label{EE-prob}
\end{equation}
Recall that an EE $(S,I)$ is a nonnegative solution of \eqref{EE-prob} with $I\not \equiv0$ on $\Omega$.
A direct application of the strong maximum principle and Hopf boundary point lemma asserts $S,I>0$ on $\overline\Omega.$

Adding the first two  PDEs in \eqref{EE-prob}, we see that
\begin{equation}
\nabla \cdot \left(d_S\nabla S +d_I\nabla I +\chi S \nabla I\right)=0
\nonumber
\end{equation}
or equivalently
\begin{equation}
\nabla \cdot \left[\left(1+\frac{\chi}{d_I}S\right) \nabla \left (\frac{d_S}{\chi} \ln \left(1+\frac{\chi}{d_I}S\right)+I \right)  \right]=0.
\label{SI-1}
\end{equation}
We claim that
\begin{equation}
\frac{d_S}{\chi}\ln \left(1+\frac{\chi}{d_I}S\right)+I \equiv \kappa
\label{SI-2}
\end{equation}
for some positive constant $\kappa$. In fact, upon setting
$$w=\frac{d_S}{\chi}\ln \left(1+\frac{\chi}{d_I}S\right)+I,   $$
we get from  \eqref{SI-1} that
$$\nabla \cdot \left [ \left(1+\frac{\chi }{d_S}S\right)\nabla w \right]=0.$$
As a result,
\begin{align*}
\nabla \cdot \left [ w\left(1+\frac{\chi }{d_S}S\right)\nabla w \right] & =w \nabla \cdot \left [ \left(1+\frac{\chi }{d_S}S\right)\nabla w \right] +\left(1+\frac{\chi}{d_S}S\right)|\nabla w|^2\\
&=\left(1+\frac{\chi}{d_S}S\right)|\nabla w|^2.
\end{align*}
Upon an integration, one sees that $w$ must be constant and hence \eqref{SI-2} holds.

Let
\begin{equation}
\tilde I=\frac{I}{\kappa}\ \ \mbox{and}\ \ \tilde S=\frac{1}{\kappa\chi}\ln \left(1+\frac{\chi}{d_I}S \right).
\label{SI-3}
\end{equation}
Then \eqref{SI-2} gives rise to
\begin{equation}
d_S\tilde S+\tilde I=1
\label{SI-4}
\end{equation}
or
\begin{equation}
\frac{d_S}{\kappa \chi}\ln \left(1+\frac{\chi}{d_I}S\right)+\tilde I=1,
\label{SI-5}
\end{equation}
from which it follows
\begin{equation}
S=\frac{d_I}{\chi}\left [\exp\left\{\frac{\kappa\chi}{d_S}\left(1-\tilde I\right)\right\}-1\right]=:g(\tilde I).
\label{SI-6}
\end{equation}
Define
\begin{equation}
f(x,\tilde I)=\beta(x)\frac{g(\tilde I)}{g(\tilde I)+\kappa \tilde I}-\gamma(x).
\label{SI-7}
\end{equation}
According to the $I$-equation, \eqref{SI-3} and \eqref{SI-6}, it can be easily seen that
$\tilde I$ solves
\begin{equation}
\left\{ \begin{array}{ll}
\displaystyle d_I\Delta \tilde I+\tilde I f(x,\tilde I)=0,&x\in\Omega,\\
\displaystyle \frac{\partial \tilde I}{\partial \nu}=0,&x\in\partial\Omega.
\end{array}\right.
\label{SI-8}
\end{equation}
In addition, the integral constraint  \eqref{EE-prob} and \eqref{SI-3} show
\begin{equation}
N=\int_\Omega \left(S+I\right)dx =\frac{d_I}{\chi}\int_\Omega \left(e^{\kappa\chi \tilde S}-1 \right)dx+\kappa \int_\Omega \tilde Idx.
\label{SI-9}
\end{equation}
These discussions yield equivalent descriptions of the equilibrium problem \eqref{EE-prob}.
\begin{lemma}
\label{equi-lem}
A pair $(S,I)$ is a positive solution of \eqref{EE-prob} if and only if $(\tilde S,\tilde I)$ is a positive solution of \eqref{SI-8}
and \eqref{SI-4} with $\kappa$ being the unique positive constant determined by \eqref{SI-9}.
\end{lemma}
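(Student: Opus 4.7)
The plan is to establish both directions of the equivalence, relying almost entirely on the algebraic manipulations already set up in \eqref{SI-1}--\eqref{SI-9}.

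For the forward implication, suppose $(S,I)$ is a positive classical solution of \eqref{EE-prob}. Adding the $S$- and $I$-equations yields the divergence-free flux identity \eqref{SI-1}, which I would recast as $\nabla\cdot[(1+\chi S/d_I)\nabla w]=0$ with $w=(d_S/\chi)\ln(1+\chi S/d_I)+I$. The key step is to show that $w$ must be constant: since the Neumann boundary conditions on $S$ and $I$ force $\partial_\nu w=0$ on $\partial\Omega$, multiplying this divergence equation by $w$ and integrating by parts gives $\int_\Omega (1+\chi S/d_I)|\nabla w|^2\,dx=0$, and positivity of $S$ forces $\nabla w\equiv 0$, hence $w\equiv\kappa$ for a (necessarily positive) constant $\kappa$. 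Defining $\tilde S$ and $\tilde I$ by \eqref{SI-3} then delivers \eqref{SI-4}; inversion yields $S=g(\tilde I)$ via \eqref{SI-6}; substituting into the $I$-equation of \eqref{EE-prob} produces \eqref{SI-8}; and the population constraint $\int_\Omega(S+I)=N$ translates word-for-word into \eqref{SI-9}. In particular, $\kappa$ is uniquely determined by $(S,I)$, so there is no ambiguity in the pairing.

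For the converse, I start with a positive pair $(\tilde S,\tilde I)$ satisfying \eqref{SI-8}--\eqref{SI-4} together with some $\kappa>0$ for which \eqref{SI-9} holds, and I define $I:=\kappa\tilde I$ and $S:=(d_I/\chi)(e^{\kappa\chi\tilde S}-1)$. Positivity and the Neumann boundary conditions on $S$ and $I$ transfer directly from those on $\tilde S$ and $\tilde I$. Multiplying \eqref{SI-8} by $\kappa$ and using the identity $f(x,\tilde I)=\beta(x)S/(S+I)-\gamma(x)$, which follows from \eqref{SI-7} together with $I=\kappa\tilde I$ and $S=g(\tilde I)$, recovers the $I$-equation in \eqref{EE-prob}. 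By construction $w=(d_S/\chi)\ln(1+\chi S/d_I)+I\equiv\kappa$, so that $d_S\nabla S+(d_I+\chi S)\nabla I\equiv 0$ pointwise on $\Omega$; taking divergence and subtracting the already verified $I$-equation yields the $S$-equation, and the constraint in \eqref{EE-prob} is simply \eqref{SI-9} rewritten.

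The only nontrivial point I expect is the first one, the passage from the divergence identity \eqref{SI-1} to the pointwise conservation law \eqref{SI-2}: the energy argument above depends on the correct induced Neumann condition for $w$ and on the uniform positivity of the weight $(1+\chi S/d_I)$, both of which are in place here. Everything else is bookkeeping carrying the change of variables \eqref{SI-3} forward and backward; one only notes that \eqref{SI-3} is a smooth bijection between positive $S$ and positive $\tilde S$, so that the two descriptions of the equilibrium problem are genuinely equivalent.
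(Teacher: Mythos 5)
Your proof is correct and follows essentially the same route as the paper: the forward direction reproduces the paper's derivation of \eqref{SI-1}--\eqref{SI-9}, and your energy argument for the constancy of $w$ (multiply by $w$, integrate by parts using the induced Neumann condition, use positivity of the weight) is exactly the computation the paper performs. The only difference is that you spell out the converse direction explicitly, which the paper leaves implicit after asserting that the preceding discussion yields equivalent descriptions; this is a welcome but not substantively different addition.
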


Thanks to the conservation of total population, we have reduced the system \eqref{EE-prob} to a single equation \eqref{SI-8}, and then we can easily establish the existence and uniqueness of EE for the cross-diffusive SIS model \eqref{SIS-1}.

\begin{theorem}
\label{EE-exist}
When  $\mathcal R_0>1$,  the cross-diffusive SIS model \eqref{SIS-1} has a unique EE.
\end{theorem}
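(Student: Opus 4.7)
The plan is to exploit Lemma \ref{equi-lem}, which reduces the task to finding a pair $(\tilde I,\kappa)$ with $\tilde I>0$ solving the scalar Neumann BVP \eqref{SI-8} and $\kappa>0$ determined by the integral constraint \eqref{SI-9}; the remaining EE data are then reconstructed from \eqref{SI-3}--\eqref{SI-4}. I would work at two levels: \emph{inner level}---solve \eqref{SI-8} uniquely for each fixed $\kappa>0$; \emph{outer level}---show that exactly one $\kappa^*>0$ makes \eqref{SI-9} hold.

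At the inner level I would first verify the logistic structure of the reaction. A direct differentiation yields
$$\frac{\partial}{\partial \tilde I}\!\left[\frac{g(\tilde I)}{g(\tilde I)+\kappa\tilde I}\right]=\frac{\kappa\bigl(\tilde I\,g'(\tilde I)-g(\tilde I)\bigr)}{\bigl(g(\tilde I)+\kappa\tilde I\bigr)^{2}}<0\qquad\text{on }(0,1),$$
since $g>0$, $g'<0$ on $[0,1)$ and $g(1)=0$. Hence $\tilde I\mapsto f(x,\tilde I)$ is strictly decreasing on $[0,1]$ with $f(x,0)=\beta(x)-\gamma(x)$ and $f(x,1)=-\gamma(x)<0$. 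Because the paper has identified $\mathcal R_0>1\Leftrightarrow\lambda^*<0$ via \eqref{eigen-prob}, the Neumann principal eigenvalue of $-d_I\Delta-(\beta-\gamma)$ is negative, and the classical upper-lower solution / sweeping machinery for logistic Neumann problems delivers a unique positive classical solution $\tilde I_\kappa\in(0,1)$ of \eqref{SI-8}. I would also record the sensitivity identity
$$\partial_\kappa\!\left[\frac{g}{g+\kappa\tilde I}\right]=\frac{d_I\tilde I\bigl[(t-1)e^{t}+1\bigr]}{\chi(g+\kappa\tilde I)^{2}},\qquad t:=\frac{\kappa\chi(1-\tilde I)}{d_S},$$
noting that $h(t):=(t-1)e^t+1\ge 0$ for $t\ge 0$ (since $h(0)=0$ and $h'(t)=te^t\ge 0$), so $\partial_\kappa f\ge 0$ and is strictly positive for $\tilde I<1$; a strong-maximum-principle comparison then shows $\kappa\mapsto\tilde I_\kappa$ is continuous in $C(\overline\Omega)$ and strictly pointwise increasing in $\kappa$.

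At the outer level I would introduce
$$\Phi(\kappa):=\frac{d_I}{\chi}\int_\Omega\!\bigl(e^{\kappa\chi(1-\tilde I_\kappa)/d_S}-1\bigr)\,dx+\kappa\int_\Omega\tilde I_\kappa\,dx,$$
so that \eqref{SI-9} becomes $\Phi(\kappa)=N$. Existence of a root $\kappa^*$ follows from the intermediate value theorem once I verify that $\Phi$ is continuous, $\Phi(\kappa)\to 0$ as $\kappa\to 0^+$ (by dominated convergence, using $\tilde I_\kappa\in(0,1)$ so that the exponent tends to $0$), and $\Phi(\kappa)\to\infty$ as $\kappa\to\infty$ (using a small multiple of the principal eigenfunction $\phi^*$ of \eqref{eigen-prob} as a subsolution of \eqref{SI-8} to keep $\tilde I_\kappa$ bounded below on a set of positive measure, whence $\kappa\int_\Omega\tilde I_\kappa\to\infty$).

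The main obstacle, as I see it, is strict monotonicity of $\Phi$, needed for uniqueness of $\kappa^*$ and hence of the EE. The second summand $\kappa\int_\Omega\tilde I_\kappa$ is obviously strictly increasing by the inner-level monotonicity, but the first summand is delicate because $\tilde S_\kappa=(1-\tilde I_\kappa)/d_S$ strictly decreases in $\kappa$, so the product $\kappa\tilde S_\kappa$ appearing in the exponential is not patently monotone. My plan is to argue by contradiction: if $\Phi(\kappa_1)=\Phi(\kappa_2)=N$ with $\kappa_1<\kappa_2$, the two corresponding EEs $(S_i,I_i)$ from Lemma \ref{equi-lem} share the same total mass yet satisfy $I_{\kappa_1}<I_{\kappa_2}$ strictly pointwise (because $I_\kappa=\kappa\tilde I_\kappa$ and both factors increase), forcing $\int_\Omega S_{\kappa_1}>\int_\Omega S_{\kappa_2}$; a careful bookkeeping using the pointwise conservation $\frac{d_S}{\chi}\ln(1+\chi S_i/d_I)+I_i\equiv\kappa_i$, combined with the strict pointwise monotonicity of $\tilde I_\kappa$ and the convexity of the exponential, should yield a contradiction. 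Once strict monotonicity of $\Phi$ is secured, the unique $\kappa^*$ with $\Phi(\kappa^*)=N$ produces, via \eqref{SI-3}, the unique EE of \eqref{SIS-1} asserted in Theorem \ref{EE-exist}.
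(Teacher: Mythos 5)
Your reduction and your ``inner level'' coincide with the paper's own proof: both rest on Lemma \ref{equi-lem}, both produce a positive solution of \eqref{SI-8} from the sub/supersolution pair $\epsilon\phi^*$ (using $\lambda^*<0$, i.e. $\mathcal R_0>1$, via \eqref{eigen-prob}) and $\overline I\equiv 1$, and both obtain uniqueness of $\tilde I$ for a \emph{fixed} $\kappa$ from the strict monotonicity $\partial f/\partial\tilde I<0$ on $(0,1)$; your computations of $\partial_{\tilde I}\bigl[g/(g+\kappa\tilde I)\bigr]$ and of the $\kappa$-sensitivity are correct. Where you depart from the paper is in taking seriously the fact that $f$, through $g$, depends on $\kappa$, so that \eqref{SI-8} and \eqref{SI-9} form a genuinely coupled system: the paper fixes $\kappa$, proves uniqueness of $\tilde I$ at that $\kappa$, and then simply asserts that $\kappa$ is uniquely determined via \eqref{SI-9}. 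Your outer-level \emph{existence} argument (continuity of $\kappa\mapsto\tilde I_\kappa$, $\Phi(0^+)=0$, $\Phi(\infty)=\infty$, intermediate value theorem) is sound and in fact supplies detail the paper omits.

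The gap is the uniqueness half of your outer level, which you yourself call ``the main obstacle'' and then do not close. Injectivity of $\Phi$ at the level $N$ is exactly where the coupling bites: your contradiction scheme needs $\int_\Omega S_{\kappa_1}\,dx\leq\int_\Omega S_{\kappa_2}\,dx$ for $\kappa_1<\kappa_2$, and by \eqref{SI-2} and \eqref{SI-6} the natural pointwise route requires $\kappa(1-\tilde I_\kappa)=\kappa-I_\kappa$ to be monotone in $\kappa$ --- but since both $\kappa$ and $I_\kappa=\kappa\tilde I_\kappa$ increase, this is precisely the non-obvious assertion you flag, and ``careful bookkeeping \dots should yield a contradiction'' is a placeholder, not an argument. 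As written, your proposal therefore establishes existence of an EE (already available from Theorem \ref{persist thm}) but not the uniqueness claimed in Theorem \ref{EE-exist}. It is only fair to note that the paper's own proof elides the same point, deducing uniqueness of the EE from uniqueness of $\tilde I$ at a fixed $\kappa$ without ruling out two EEs corresponding to different values of $\kappa$; nonetheless, for your write-up to stand on its own you must supply this step, e.g. by proving that $\Phi$ is strictly increasing or by a direct comparison of two equilibria with $\kappa_1<\kappa_2$.
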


\begin{proof}
It is enough to show that \eqref{SI-8} admits a unique positive solution $\tilde I$ with $\tilde I<1$. If so, one can solve $\tilde S \ (>0)$ from \eqref{SI-4}, and then $\kappa>0$ is uniquely determined via \eqref{SI-9}.
The assumption $\mathcal R_0>1$ entails $\lambda^*<0$, where $\lambda^*$ is the principal eigenvalue of the eigenvalue problem \eqref{eigen-prob}. Let $\phi^*>0$ be the corresponding principal eigenfunction. Direct calculations imply that $\underline I=\epsilon \phi^*$ and $\overline I\equiv 1$ is a pair of sub- and super-solutions of \eqref{SI-8}, provided $\epsilon>0$ is chosen to be sufficiently small. Thus, there exists some $\tilde I\in [\underline I, \overline I]$. As a result, $0<\tilde I\leq 1$ on $\overline\Omega$.

We now claim $0<\tilde I <1$ on $\overline\Omega$. In fact, let $\tilde I(x_0)=\max_{\overline\Omega}\tilde I$. Then  the maximum principle \cite[Proposition 2.2]{Lou-Ni} applied to \eqref{SI-8} entails that
$$\beta(x_0)\frac{g(\tilde I(x_0))}{g(\tilde I(x_0))+\kappa \tilde I(x_0)}\geq \gamma(x_0)> 0.$$
By the definition of $g$ in \eqref{SI-6} and the fact that $0<\tilde I\leq 1$, we have $g(\tilde I)\geq 0$. Then the above inequality yields $g(\tilde I(x_0))>0$,  which  in turn shows that  $\tilde I(x)\leq \tilde I(x_0)<1$ for $x\in\overline\Omega.$

Finally, thanks to the fact $\tilde I\in (0,1)$, we simply calculate from \eqref{SI-6} and  \eqref{SI-7}  that  $\frac{\partial f}{\partial \tilde I}(x,\tilde I) <0$ for $x\in \overline\Omega$.  This enables us to deduce the uniqueness of $\tilde I$; see the detailed argument  in the proof of \cite[Lemma 3.3]{Allen}.
\end{proof}

\section{Asymptotic Behavior of EE as $d_S\to 0$}

In this section, we shall study the effect of motility of susceptible population. That is, we will investigate the asymptotic behavior of EE as $d_S\to 0.$ We always assume $\mathcal R_0>1$ so that \eqref{SIS-1} possesses a unique EE by Theorem \ref{EE-exist}. Depending on whether or not  $\beta(x)-\gamma(x)$ changes sign, we consider two different cases.

We first present a simple lemma, providing the asymptotic behavior of $\tilde I$ defined via \eqref{SI-3} for small $d_S>0$.

\begin{lemma}
\label{lem-1}
If $\mathcal R_0>1$, up  to a subsequence of $d_S\to 0$, it holds $\tilde I\to \tilde I^*$ in $C^1(\overline\Omega)$ for some $\tilde I^*\in C^1(\overline\Omega)$ with $0<\tilde I^*(x)\leq 1$ for $x\in\overline\Omega$ and $\partial\tilde I^*/\partial\nu=0$ on $\partial\Omega$.
\end{lemma}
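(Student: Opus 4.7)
The plan is to combine uniform elliptic regularity, compactness, and a $d_S$-uniform subsolution comparison to obtain the claimed $C^1$ subsequential limit with the stated properties. First, I note that the proof of Theorem \ref{EE-exist} already gives $0<\tilde I<1$ on $\overline\Omega$, and the nonlinearity $f(x,\tilde I)$ in \eqref{SI-7} satisfies $|f(x,\tilde I)|\le \beta^*+\gamma^*$ because $g(\tilde I)/(g(\tilde I)+\kappa\tilde I)\in[0,1]$. Hence the right-hand side of $-d_I\Delta\tilde I=\tilde I f(x,\tilde I)$ is bounded in $L^\infty(\Omega)$ uniformly in $d_S>0$, and $d_I$ is fixed, so \eqref{SI-8} is a uniformly elliptic Neumann problem with uniformly $L^\infty$-bounded source.

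Second, I would apply standard $L^p$ elliptic theory to \eqref{SI-8} to get a uniform $W^{2,p}(\Omega)$ bound for any $p\in(n,\infty)$, and then invoke Sobolev embedding to obtain uniform boundedness in $C^{1,\alpha}(\overline\Omega)$ for some $\alpha\in(0,1)$. The compact embedding $C^{1,\alpha}(\overline\Omega)\hookrightarrow\hookrightarrow C^1(\overline\Omega)$ supplies a subsequence $d_S\to 0$ along which $\tilde I\to\tilde I^*$ in $C^1(\overline\Omega)$, and the bounds $0\le\tilde I^*\le 1$ and $\partial_\nu\tilde I^*=0$ on $\partial\Omega$ pass to the limit from $\tilde I$ by the $C^1$ convergence.

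Third, for strict positivity $\tilde I^*>0$ on $\overline\Omega$, I plan to construct a $d_S$-uniform subsolution. Let $\phi^*>0$ be the principal eigenfunction of \eqref{eigen-prob}, with $\lambda^*<0$ because $\mathcal R_0>1$, so that $d_I\Delta\phi^*=(\gamma-\beta-\lambda^*)\phi^*$. Using the identity $f(x,\tilde I)-(\beta-\gamma)=-\beta\,\kappa\tilde I/(g(\tilde I)+\kappa\tilde I)$, the verification that $\underline I=\varepsilon\phi^*$ is a subsolution of \eqref{SI-8} reduces to the pointwise inequality
\[
\beta(x)\,\frac{\kappa\,\varepsilon\phi^*(x)}{g(\varepsilon\phi^*(x))+\kappa\,\varepsilon\phi^*(x)}\;\le\;-\lambda^*\quad\text{on }\overline\Omega.
\]
If I fix $\varepsilon>0$ so small that $\varepsilon\phi^*\le 1/2$ on $\overline\Omega$, then $g(\varepsilon\phi^*)\ge (d_I/\chi)\bigl(e^{\kappa\chi/(2d_S)}-1\bigr)$, so the left-hand side is of order $\kappa d_S\,e^{-\kappa\chi/(2d_S)}\to 0$ and the inequality holds for all sufficiently small $d_S$. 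Sub/supersolution comparison (exactly as in the proof of Theorem \ref{EE-exist}) then yields $\tilde I\ge\varepsilon\phi^*$ uniformly in $d_S$, and passing to the $C^1$ limit gives $\tilde I^*\ge\varepsilon\phi^*>0$ on $\overline\Omega$.

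The main obstacle is the final step: making the subsolution estimate genuinely uniform in $d_S$ requires a positive lower bound on $\kappa$ that does not degenerate as $d_S\to 0$ (if $\kappa\to 0$ too fast, the denominator $g(\varepsilon\phi^*)$ may also tend to $0$ and the fraction need not be small). I would address this by extracting such a bound from \eqref{SI-9} together with the uniform $L^\infty$ bound on $\tilde I$ and the Harnack inequality applied to \eqref{SI-8} (which, via $\tilde I\le 1$, forces $\sup\tilde I$ and hence $\int_\Omega\tilde I$ to remain bounded away from zero, so that $\kappa=(N-\int_\Omega S)/\int_\Omega\tilde I$ cannot collapse while $\int_\Omega S\le N$). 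With this lower bound on $\kappa$ in hand, the uniform subsolution argument goes through and the lemma follows.
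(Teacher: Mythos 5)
Your compactness step coincides with the paper's: both use the uniform $L^\infty$ bound $|f(x,\tilde I)|\le\beta^*+\gamma^*$, uniform $W^{2,p}$/$C^{1+\alpha}$ elliptic estimates for \eqref{SI-8}, and Arzel\`a--Ascoli to extract a $C^1$-convergent subsequence, with $0\le\tilde I^*\le1$ and the Neumann condition passing to the limit. Where you diverge is the strict positivity of $\tilde I^*$: the paper applies the Harnack inequality to get the dichotomy ``$\tilde I^*>0$ on $\overline\Omega$ or $\tilde I^*\equiv0$'' and then rules out the second alternative by showing (as in Allen et al., Lemma 4.1) that $g$ and $f$ are decreasing in $d_S$, hence $\tilde I$ is monotone in $d_S$ and its limit dominates $\tilde I$ at any fixed $d_S>0$. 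Your route --- a $d_S$-uniform subsolution $\varepsilon\phi^*$ combined with the uniqueness of the positive solution of \eqref{SI-8} --- is a legitimate alternative and is arguably more self-contained, since it avoids both the Harnack dichotomy and the monotonicity-in-$d_S$ argument.

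However, the last paragraph of your proposal is wrong, and you should delete it rather than repair it. You claim the subsolution estimate requires a positive lower bound on $\kappa$ uniform in $d_S$, and you propose to extract one from \eqref{SI-9}. No such bound exists in general: the paper's Lemma 5.4(ii) shows that when $\{\beta<\gamma\}\neq\emptyset$ one has $\kappa\to0$ as $d_S\to0$, so any argument purporting to bound $\kappa$ away from zero must fail. Fortunately the bound is not needed. From $\varepsilon\phi^*\le1/2$ and $e^x-1\ge x$ you get $g(\varepsilon\phi^*)\ge\frac{d_I}{\chi}\bigl(e^{\kappa\chi/(2d_S)}-1\bigr)\ge\frac{d_I\kappa}{2d_S}$, whence
\begin{equation*}
\beta(x)\,\frac{\kappa\,\varepsilon\phi^*}{g(\varepsilon\phi^*)+\kappa\,\varepsilon\phi^*}\;\le\;\beta^*\,\frac{\kappa\,\varepsilon\phi^*}{d_I\kappa/(2d_S)}\;=\;\frac{2\beta^*\varepsilon\phi^*}{d_I}\,d_S,
\end{equation*}
in which $\kappa$ cancels entirely. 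The right-hand side is at most $-\lambda^*$ once $d_S\le\frac{-\lambda^*d_I}{2\beta^*\varepsilon\|\phi^*\|_{L^\infty(\Omega)}}$, so the subsolution inequality holds for all sufficiently small $d_S$ with a single $\varepsilon$, and your positivity argument closes without any information on $\kappa$. (Your intermediate claim that the left-hand side is ``of order $\kappa d_S e^{-\kappa\chi/(2d_S)}$'' is also off --- the correct uniform bound is $O(d_S)$ as above --- but this does not affect the conclusion.)
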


\begin{proof}
First, it follows from  the definition of $f$ in \eqref{SI-7} that
\begin{equation}
\left\| f(x,\tilde I)\right\|_{L^\infty(\Omega)}\leq \beta^*+\gamma^*,\quad \forall d_S>0.
\nonumber
\end{equation}
An application of the Harnack inequality \cite[Lemma 2.2]{Peng-JDE09} to \eqref{SI-8}  gives
\begin{equation}
\max_{\overline\Omega} \tilde I\leq C\min_{\overline\Omega}\tilde I
\label{Har-I tilde}
\end{equation}
for some positive constant $C$ independent of $d_S>0.$

Next,  we have shown that  $0<\tilde I< 1$ and so $\tilde I$ is uniformly bounded for $d_S>0$. Hence, applying the standard $L^p$-estimates to \eqref{SI-8} and the Sobolev embedding theorem, we infer that the $C^{1+\alpha}(\overline\Omega)$-bound of $\tilde I$ is also independent of $d_S>0$ for some $\alpha\in (0,1)$. Thus, after passing to a subsequence of $d_S\to 0$, it holds
\begin{equation}
\tilde I\to \tilde I^*\geq 0\ \mbox{ in } C^1(\overline\Omega),\ \  \mbox{ as } d_S\to 0
\label{Istar-conv}
\end{equation}
for some $\tilde I^*\in C^1(\overline\Omega)$. This $C^1$-convergence enforces that  $\tilde I$ fulfills the homogeneous Neumann boundary condition on $\partial\Omega$.
Furthermore, by  \eqref{Har-I tilde}, it follows
\begin{equation}
\tilde I^*>0\ \mbox{ on }\overline\Omega\quad \mbox{or}\quad \tilde I^*\equiv0.
\nonumber
\end{equation}
On the other hand, it is easy to check from the definitions of  $g$ in \eqref{SI-6} and   $f$ in \eqref{SI-7} that both $g$ and $f$ are decreasing with respect to $d_S>0$. Then the  same argument as \cite[Lemma 4.1]{Allen} shows  that $\tilde I$ is in fact a decreasing function of $d_S$. Consequently, it must hold $\tilde I^*>0$ over $\overline\Omega$. This finishes the proof.
\end{proof}

Now,  we are ready to present the asymptotic behavior of EE  when $\beta>\gamma$ on $\overline\Omega$.
\begin{theorem}
\label{asym-thm1}
Suppose $\beta(x)>\gamma(x)$ on $\overline\Omega$. Then as $d_S\to 0$, any EE $(S,I)$ of the model \eqref{SIS-1} satisfies $(S,I)\to \left(S^*,I^*\right)$ uniformly on $\overline\Omega$ with
\begin{equation}
S^*= \frac{\gamma(x)}{\beta(x)-\gamma(x)}I^* \quad \quad and \quad \quad  I^*=\frac{N}{\int_\Omega \frac{\beta(x)}{\beta(x)-\gamma(x)}dx}.
\label{asym-1}
\end{equation}
\end{theorem}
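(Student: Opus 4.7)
The plan is to build on the reduction from Section~4: by Lemma~\ref{equi-lem}, any EE is characterized by a pair $(\tilde I, \kappa)$ with $\tilde I\in(0,1)$ solving \eqref{SI-8}, $S=g(\tilde I)$, $I=\kappa\tilde I$, and $\kappa$ fixed by \eqref{SI-9}. By Lemma~\ref{lem-1}, along a subsequence $d_S\to 0$ I already have $\tilde I\to \tilde I^*$ in $C^1(\overline\Omega)$ with $\tilde I^*>0$. I will (i) bound $\kappa$ away from $0$ and $\infty$, so that $\kappa\to K>0$ along a further subsequence; (ii) show $\tilde I^*\equiv 1$ on $\overline\Omega$; and (iii) pass to the limit in the $I$-equation to identify $(S^*,I^*)$.

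For (i), the upper bound follows immediately from \eqref{SI-9}: since $\kappa\int_\Omega\tilde I\,dx\le N$, the Harnack estimate \eqref{Har-I tilde} together with $\tilde I^*>0$ bounds $\int_\Omega\tilde I\,dx$ below uniformly in $d_S$. For the lower bound I would argue by contradiction: if $\kappa\to 0$, then $I=\kappa\tilde I\to 0$ uniformly, and a short analysis of $g(\tilde I)/(g(\tilde I)+\kappa\tilde I)$---which tends pointwise to $1$ on the set $\{\tilde I^*<1\}$, using the explicit form of $g$ in \eqref{SI-6}---drives \eqref{SI-8} to the limiting equation $d_I\Delta\tilde I^*+(\beta-\gamma)\tilde I^*=0$ with homogeneous Neumann conditions on that set; integrating against $1$ yields $\int(\beta-\gamma)\tilde I^*\,dx=0$, contradicting $\beta>\gamma$ and $\tilde I^*>0$. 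In the borderline case $\tilde I^*\equiv 1$ one uses instead an eigenvalue argument with the principal eigenfunction $\phi^*$ of \eqref{eigen-prob} (with $\lambda^*<0$) to rule out $\kappa\to 0$.

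For (ii), I will combine \eqref{SI-6} and \eqref{SI-9} to obtain
\[
\frac{d_I}{\chi}\int_\Omega \bigl(e^{\kappa\chi(1-\tilde I)/d_S}-1\bigr)\,dx \;=\; N-\kappa\int_\Omega\tilde I\,dx \;\le\; N
\]
uniformly in $d_S$. If $\tilde I^*(x_0)<1$ at some $x_0\in\overline\Omega$, the $C^1$-convergence yields a ball $B$ and constants $c,\delta>0$ with $1-\tilde I\ge c$ on $B\cap\Omega$ for every $d_S<\delta$; since $\kappa\ge K/2$ eventually, the left side dominates $(d_I/\chi)|B\cap\Omega|(e^{K\chi c/(2d_S)}-1)\to\infty$, a contradiction. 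Hence $\tilde I^*\equiv 1$, which combined with $\kappa\to K$ gives $I=\kappa\tilde I\to K$ uniformly, so $I^*\equiv K$.

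For (iii), I would test the $I$-equation against any $\varphi\in C^\infty(\overline\Omega)$ and integrate by parts using the Neumann condition to get
\[
\int_\Omega I\Bigl(\beta\tfrac{S}{S+I}-\gamma\Bigr)\varphi\,dx \;=\; -d_I\int_\Omega I\,\Delta\varphi\,dx.
\]
Since $I\to K$ uniformly, the right-hand side tends to $-d_IK\int_\Omega\Delta\varphi\,dx=0$, while the left tends to $K\int_\Omega (\beta S/(S+I)-\gamma)\varphi\,dx$; as $K>0$ this forces $\beta S/(S+I)\to\gamma$ in $\mathcal D'(\Omega)$. Coupling this with the pointwise identity $S=g(\tilde I)$ will upgrade the convergence: the exponent $\kappa\chi(1-\tilde I)/d_S$ must converge uniformly to $\ln\bigl(1+\chi\gamma K/[d_I(\beta-\gamma)]\bigr)$, giving $S\to S^*=\gamma K/(\beta-\gamma)$ uniformly on $\overline\Omega$. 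The conservation law $\int_\Omega(S^*+I^*)\,dx=N$ then forces $K\int_\Omega\beta/(\beta-\gamma)\,dx=N$, i.e.\ $K=I^*=N\big/\int_\Omega\beta/(\beta-\gamma)\,dx$. I anticipate the main obstacles to be the lower bound on $\kappa$ in step~(i), where the degeneration of $g/(g+\kappa\tilde I)$ as $\kappa\to 0$ requires careful case analysis, and the upgrade from distributional to uniform convergence of $S$ in step~(iii), for which the explicit reduction $S=g(\tilde I)$ together with the $C^1$-convergence of $\tilde I$ is essential.
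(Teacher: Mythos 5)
Your overall scaffolding (the Section~4 reduction to $(\tilde I,\kappa)$, Lemma~\ref{lem-1}, the integral constraint \eqref{SI-9}, and conservation of mass at the end) matches the paper's, and your step~(ii) deducing $\tilde I^*\equiv 1$ from $\int_\Omega S\le N$ once $\kappa$ is bounded below is clean and correct. But the load-bearing step --- ruling out the degenerate limit $I\to 0$, i.e.\ your lower bound on $\kappa$ --- has a genuine gap. Your case~(a) argument integrates the limiting equation $d_I\Delta\tilde I^*+(\beta-\gamma)\tilde I^*=0$ over the set $\{\tilde I^*<1\}$ and concludes $\int(\beta-\gamma)\tilde I^*=0$; but that set is in general a proper open subset of $\Omega$, the Neumann condition is only available on $\partial\Omega$, and integration by parts leaves an uncontrolled flux term $\int\partial_\nu\tilde I^*$ on the interior part of its boundary. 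Your case~(b) ($\tilde I^*\equiv 1$) is exactly the dangerous case: there $1-\tilde I\to 0$ while $\kappa\chi(1-\tilde I)/d_S$ is an indeterminate form, so the coefficient $g(\tilde I)/(g(\tilde I)+\kappa\tilde I)$ has no identifiable limit and the proposed ``eigenvalue argument with $\phi^*$'' has nothing to act on; it is not specified and I do not see how to make it work. The paper closes this step by an entirely different and essentially elementary mechanism that you are missing: rewriting the $S$-equation in non-divergence form \eqref{S-eqn2} and evaluating at a maximum point $x_0$ of $S$ via the maximum principle gives $\max_{\overline\Omega}S\le\bigl(\max_{\overline\Omega}\tfrac{\gamma}{\beta-\gamma}\bigr)\|I\|_{L^\infty(\Omega)}$ (this is where $\beta>\gamma$ on all of $\overline\Omega$ is used), so $I\to0$ would force $S\to0$ and contradict $\int_\Omega(S+I)=N$. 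With $I^*>0$ in hand, the paper then gets $\tilde I^*\equiv1$ and $\kappa\to I^*$ by the same contradiction you use, just in the reverse logical order.

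There is a second gap in your step~(iii). The distributional identity $\beta S/(S+I)\to\gamma$ only yields weak-$*$ convergence of $S$, and your proposed upgrade to uniform convergence via $S=g(\tilde I)$ does not go through: the exponent $\kappa\chi(1-\tilde I)/d_S$ is again $\infty\cdot 0$, and knowing $\tilde I\to 1$ in $C^1(\overline\Omega)$ gives no pointwise (let alone uniform) control on $\tfrac{\kappa}{d_S}(1-\tilde I)$; the claim that this exponent ``must converge uniformly to $\ln\bigl(1+\chi\gamma I^*/[d_I(\beta-\gamma)]\bigr)$'' is precisely what needs proof. The paper obtains uniform convergence of $S$ by a different route: since $I\to I^*=\mathrm{const}$ in $C^1(\overline\Omega)$ implies $|\nabla I|\to0$ uniformly, a standard singular perturbation argument (as in \cite[Lemma 2.4]{DPW} or \cite{HLM}) applied to \eqref{S-eqn2} gives $S\to\gamma I^*/(\beta-\gamma)$ uniformly. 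You would need to import both the maximum-principle bound and the singular perturbation step to complete your proof.
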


\begin{proof}
We first notice that $\beta>\gamma$ on $\overline\Omega$ is sufficient to guarantee $\mathcal R_0>1$ and hence a unique EE $(S,I)$ of \eqref{SIS-1} exists by Theorem \ref{EE-exist}. The $I$-equation reads as
\begin{equation}
\left\{ \begin{array}{ll}
\displaystyle d_I\Delta I + \left(\beta\frac{S}{S+I}-\gamma \right)I=0,&x\in\Omega,\\
\displaystyle \frac{\partial I}{\partial \nu}=0,&x\in\partial\Omega.
\end{array}\right.
\nonumber
\end{equation}
Observing
$$\left\|\beta\frac{S}{S+I}-\gamma\right\|_{L^\infty(\Omega)}\leq \beta^*+\gamma^*,\quad \forall d_S>0,$$
we obtain from the Harnack inequality  that
\begin{equation}
\max_{\overline\Omega} I\leq C\min_{\overline\Omega}I
\label{Har-I}
\end{equation}
for some positive constant $C$ independent of $d_S>0$.  Since $\int_\Omega I dx\leq N$, we once again apply  \eqref{Har-I} to end up with
\begin{equation}
\max_{\overline\Omega} I\leq C\min_{\overline\Omega}I\leq \frac{C}{|\Omega|}\int_\Omega I dx\leq \frac{CN}{|\Omega|}.
\nonumber
\end{equation}
Thus, the $L^\infty$-bound of $I$ is independent of $d_S>0$. The same argument leading to  \eqref{Istar-conv} shows, after passing to a subsequence of $d_S\to 0$,  that
\be\label{I-con}
I\to I^*\geq 0\ \mbox{ in } C^1(\overline\Omega),\ \  \mbox{ as } d_S\to 0
\ee
for some $I^*\in C^1(\overline\Omega)$. Moreover, the Harnack inequality \eqref{Har-I} implies
\begin{equation}
I^*>0\ \ \mbox{ on }\overline\Omega\quad \mbox{or}\quad I^*\equiv0 \mbox{ on }\overline\Omega.
\label{I-alt}
\end{equation}
 We now expand out the cross-diffusive term in the $S$-equation and use the $I$-equation to discover that $S$ fulfills
\be
\left\{ \begin{array}{ll}
\displaystyle d_S\Delta S+\chi \nabla S\cdot \nabla I=I\left(1+\frac{\chi S}{d_I}\right) \left(\beta\frac{S}{S+I}-\gamma \right),&x\in\Omega,\\
\displaystyle \frac{\partial S}{\partial I}=0,&x\in\partial\Omega.
\end{array}\right.
\label{S-eqn2}
\ee
Let $S(x_0)=\max_{\overline\Omega}S$.
Then the maximum principle in \cite[Proposition 2.2]{Lou-Ni} entails
\begin{equation}
I(x_0)\left(1+\frac{\chi S(x_0)}{d_I}\right)\frac{(\beta(x_0)-\gamma(x_0))S(x_0)-\gamma(x_0) I(x_0)}{S(x_0)+I(x_0)}\leq 0,
\nonumber
\end{equation}
from which it follows
\begin{equation}
\max_{\overline\Omega}S=S(x_0) \leq \frac{\gamma(x_0)}{\beta(x_0)-\gamma(x_0)} I(x_0) \leq \left(\max_{\overline\Omega}\frac{\gamma}{\beta-\gamma}\right) \|I\|_{L^\infty(\Omega)}.
\nonumber
\end{equation}
This in  conjunction with \eqref{I-alt} and \eqref{I-con} forces $I^*>0$, since otherwise both $S$ and $I$ are small for sufficiently small $d_S>0$, contradicting the prescribed mass conservation $\int_\Omega [S(x)+I(x)]dx=N$.

We now claim that $\tilde I^*\equiv 1$, where $\tilde I^*$ is given in Lemma \ref{lem-1}. Suppose not,  then according to Lemma \ref{lem-1}, we have $$\int_\Omega \left(1-\tilde I^*\right)dx>0.$$
In view of the relation of $S$ and $\tilde I$ in \eqref{SI-5}, for small $d_S>0$, we deduce
\begin{align*}
0<\frac{1}{2}\int_{\Omega} \left(1-\tilde I^*\right)dx<\int_\Omega \left(1-\tilde I\right)dx&=\frac{d_S}{\kappa \chi}\int_\Omega \ln\left(1+\frac{\chi}{d_I}S\right)dx\\
&\leq
\frac{d_S}{\kappa \chi}\int_\Omega\frac{\chi}{d_I}Sdx\leq \frac{d_S}{\kappa d_I}N,
\end{align*}
from which it follows that $\kappa\to 0$ as $d_S\to 0$. As a result, $I\to 0$ as $d_S\to 0$ due to \eqref{SI-2}. However, this is a contradiction to $I^*>0$, as we have just proved. Thus, we must have $\tilde I^*\equiv 1$. Consequently, since $I\to I^*$ in $C^1(\overline\Omega)$ as $d_S\to 0$ and $I=\kappa \tilde I$ for positive constant $\kappa$, then $I^*$ is necessarily a positive constant.

Since $I\to I^*\equiv\mbox{const}>0$ in $C^1(\overline\Omega)$ (in turn $|\nabla I|\to0$ uniformly on $\overline\Omega$) as $d_S\to 0$, a standard singular perturbation argument (see, for instance, \cite[Lemma 2.4]{DPW} or \cite{HLM}) applied to \eqref{S-eqn2} yields that
 \begin{eqnarray}
 S(x)\to S^*(x)=\frac{\gamma(x)}{\beta(x)-\gamma(x)}I^*\ \ \mbox{ uniformly on }\overline\Omega,
 \label{singular}
\end{eqnarray}
as $d_S\to 0$. Then the conservation of total population
$$\int_{\Omega}\left(S^*+I^*\right)dx=N$$
simply gives \eqref{asym-1}. The uniqueness of $(S^*,I^*)$ says that all the above limits hold without passing to a subsequence of $d_S\to 0$.
\end{proof}

In the sequel, besides  $\mathcal R_0>1$, we shall assume that the set $\{x\in\overline\Omega:\beta(x)<\gamma(x)\}$ is nonempty, which in fact indicates that $\beta(x)-\gamma(x)$ must change sign. Note that all of $S$, $I$ and $\kappa>0$ in \eqref{SI-2} depend on $d_S$. Hence, to determine their asymptotics as $d_S\to 0$, we shall start with the limiting function of $\tilde I$, i.e., $\tilde I^*\in (0,1]$.
To further study the limiting function $\tilde I^*$, we need to determine where $0<\tilde I^*<1$ and where $\tilde I^*=1$. For these purposes, we define
\begin{equation}
H^-=\{x\in\Omega: \beta(x)<\gamma(x)\}\quad \mbox{and}\quad H^+=\{x\in\Omega: \beta(x)>\gamma(x)\}
\nonumber
\end{equation}
and
\begin{equation}
J^-=\{x\in\overline\Omega: 0<\tilde I^*(x)<1\}\quad \mbox{and}\quad J^+=\{x\in\overline\Omega: \tilde I^*(x)=1\}.
\label{sets-2}
\end{equation}

\begin{lemma}
Suppose that $\mathcal R_0>1$ and $\{x\in\overline\Omega:\beta(x)<\gamma(x)\}\neq \emptyset$.
Then the following statements hold.

\begin{itemize}
\item[{\rm (i)}] $H^- \subset J^-$;
\item[{\rm (ii)}] after passing to a subsequence of $d_S\to 0$, $\kappa\to 0$ and $I\to 0$ in $C^1(\overline\Omega)$;
\item[{\rm (iii)}] after passing to a subsequence of $d_S\to 0$, $\kappa/d_S\to M>0$, where $M$ is the unique number satisfying
\begin{equation}
\int_\Omega e^{\chi (1-\tilde I^*(x))M}dx=\frac{N\chi}{d_I}+|\Omega|
\label{M}
\end{equation}
and
\begin{equation}
S(x)\to S^*(x):=\frac{d_I}{\chi} \left[e^{\chi(1-\tilde I^*(x))M }-1 \right] \mbox{    in    }  C^1(\overline\Omega).
\label{S-star}
\end{equation}

\end{itemize}

\end{lemma}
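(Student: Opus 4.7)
The plan is to prove the three assertions in sequence. I expect the main obstacle, and the only genuinely analytic step, to be part (i), where I must manufacture a uniform positive lower bound for $\Delta\tilde I$ on a subregion of $H^-$ by combining equation \eqref{SI-8} with the Harnack inequality \eqref{Har-I tilde}. Once (i) is in hand, parts (ii) and (iii) will follow from bookkeeping with the integral constraint \eqref{SI-9}.

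For (i), I would argue by contradiction. Suppose $\tilde I^*(x_0)=1$ for some $x_0\in H^-$. Since $H^-$ is open, fix $r>0$ with $\overline{B_r(x_0)}\subset H^-$, and let $\eta_0:=\min_{\overline{B_r(x_0)}}(\gamma-\beta)>0$. Using $g(\tilde I)/[g(\tilde I)+\kappa\tilde I]\leq 1$, equation \eqref{SI-8} yields $d_I\Delta\tilde I\geq\eta_0\tilde I$ in $B_r(x_0)$. Since $\tilde I\leq 1$ and $\tilde I(x_0)\to 1$ along the subsequence from Lemma \ref{lem-1}, we have $\max_{\overline\Omega}\tilde I\to 1$; the Harnack bound \eqref{Har-I tilde} then gives $\min_{\overline\Omega}\tilde I\geq 1/(2C)$ for all small $d_S$, so $\Delta\tilde I\geq C_0:=\eta_0/(2Cd_I)>0$ in $B_r(x_0)$. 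Comparing $\tilde I$ with $w(x):=1-C_0(r^2-|x-x_0|^2)/(2n)$, which solves $\Delta w=C_0$ in $B_r(x_0)$ and satisfies $w=1\geq\tilde I$ on $\partial B_r(x_0)$, the maximum principle applied to the subharmonic function $\tilde I-w$ forces $\tilde I(x_0)\leq w(x_0)=1-C_0 r^2/(2n)$. Letting $d_S\to 0$ produces $1\leq 1-C_0 r^2/(2n)$, a contradiction.

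For (ii), set $\mu:=\kappa/d_S$. The relation $d_S\tilde S+\tilde I=1$ and \eqref{SI-9} together recast the mass constraint as
\[
\int_\Omega e^{\chi\mu(1-\tilde I(x))}\,dx=\frac{N\chi}{d_I}+|\Omega|-\frac{\chi\kappa}{d_I}\int_\Omega\tilde I\,dx.
\]
Because $\kappa\int_\Omega\tilde I=\int_\Omega I\leq N$ and $\int_\Omega\tilde I\to\int_\Omega\tilde I^*>0$ (Lemma \ref{lem-1}), the constant $\kappa$ stays bounded and the right-hand side is bounded uniformly in $d_S$. Part (i) supplies some $x_1\in H^-$ with $1-\tilde I^*(x_1)>0$, and the uniform convergence $\tilde I\to\tilde I^*$ yields a small ball and $\delta>0$ on which $1-\tilde I\geq\delta/2$ for all small $d_S$. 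If a subsequence had $\mu\to\infty$, the left-hand side would blow up, a contradiction; hence $\mu$ is bounded, $\kappa=d_S\mu\to 0$, and $I=\kappa\tilde I\to 0$, $\nabla I=\kappa\nabla\tilde I\to 0$ uniformly on $\overline\Omega$ by the $C^1$-bound on $\tilde I$ from Lemma \ref{lem-1}, giving $I\to 0$ in $C^1(\overline\Omega)$.

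For (iii), pass to a further subsequence with $\mu\to M\in[0,\infty)$. If $M=0$, the limit of the displayed identity reads $|\Omega|=|\Omega|+N\chi/d_I$, contradicting $N>0$; hence $M>0$, and letting $d_S\to 0$ in the identity (with $\kappa\to 0$ and $\tilde I\to\tilde I^*$ uniformly) produces \eqref{M}. Uniqueness of $M$ follows because the map $t\mapsto\int_\Omega e^{\chi t(1-\tilde I^*)}\,dx$ has derivative $\int_\Omega\chi(1-\tilde I^*)e^{\chi t(1-\tilde I^*)}\,dx$, which by (i) is strictly positive since $1-\tilde I^*>0$ on the nonempty open set $H^-$; the map is therefore strictly increasing. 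Finally, from the explicit formula $S=g(\tilde I)=\frac{d_I}{\chi}(e^{\chi\mu(1-\tilde I)}-1)$ and $\nabla S=-d_I\mu\,e^{\chi\mu(1-\tilde I)}\nabla\tilde I$, the convergences $\mu\to M$ and $\tilde I\to\tilde I^*$ in $C^1(\overline\Omega)$ yield $S\to S^*$ in $C^1(\overline\Omega)$.
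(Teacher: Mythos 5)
Your proof is correct, and for parts (ii) and (iii) it follows essentially the same route as the paper: everything hinges on rewriting the mass constraint \eqref{SI-9} via the relation $S=g(\tilde I)$ in \eqref{SI-6} and on the positivity of $\int_{H^-}(1-\tilde I^*)$ supplied by part (i). The only cosmetic difference there is that the paper bounds $\mu=\kappa/d_S$ from the logarithmic form of the constraint, namely $\frac{\kappa\chi}{d_S}\int_\Omega(1-\tilde I)=\int_\Omega\ln\left(1+\frac{\chi}{d_I}S\right)\le\frac{\chi}{d_I}N$ together with \eqref{SI-5}, whereas you bound it from the exponential form by ruling out $\mu\to\infty$ on a ball where $1-\tilde I$ stays bounded below; the two are interchangeable, and your observation that $\kappa\int_\Omega\tilde I=\int_\Omega I\le N$ plays the same role as the paper's $\int_\Omega S\le N$. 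The genuine divergence is in part (i): the paper simply defers to the indirect argument of \cite[Lemma 4.3]{Allen}, while you give a self-contained barrier proof, deriving $d_I\Delta\tilde I\ge\eta_0\tilde I$ on a ball $\overline{B_r(x_0)}\subset H^-$ from \eqref{SI-8} and $g/(g+\kappa\tilde I)\le1$, pushing $\tilde I$ uniformly away from zero via the Harnack inequality \eqref{Har-I tilde} (the uniform positivity of $\tilde I^*$ from Lemma \ref{lem-1} would serve equally well), and comparing with the explicit quadratic $w$ to get the $d_S$-independent bound $\tilde I(x_0)\le 1-C_0r^2/(2n)$. This makes the lemma independent of the cited reference at the cost of a few lines, and I find no gap in it; the uniqueness of $M$ via strict monotonicity of $t\mapsto\int_\Omega e^{\chi t(1-\tilde I^*)}dx$, which the paper dismisses as obvious, is also correctly justified by your part (i).
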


\begin{proof}
(i) can be proved by using an indirect argument as  in \cite[Lemma 4.3]{Allen}.

(ii) Firstly, it  follows from \eqref{SI-5} that
\begin{equation}
\frac{\kappa\chi}{d_S}\int_\Omega \left(1-\tilde I\right)dx=\int_\Omega \ln \left(1+\frac{\chi}{d_I}S\right)dx\leq \int_\Omega \frac{\chi}{d_I}Sdx\leq \frac{\chi}{d_I}N.
\label{lem2-1}
\end{equation}
On the other hand, as $d_S\to 0$, the item (i) implies
$$\int_\Omega \left(1-\tilde I\right)dx\to  \int_\Omega \left(1-\tilde I^*\right)dx\geq \int_{H^-} \left(1-\tilde I^*\right)dx>0. $$
As a result, for small $d_S>0$, it holds
$$\int_\Omega \left(1-\tilde I\right)dx\geq \frac{1}{2}\int_{H^-}\left(1-\tilde I^*\right)dx>0.$$
This, together with \eqref{lem2-1},  indicates that for small $d_S>0$,
$$
0<\frac{\kappa}{2d_S}\int_{H^-} \left(1-\tilde I^*\right)dx\leq \frac{N}{d_I}.
$$
This forces $\kappa\to 0$ as $d_S\to 0$, and so  \eqref{SI-2} ensures $I\leq \kappa \to 0$ uniformly on $\overline\Omega$ as $d_S\to 0$. Since $\tilde I\to \tilde I^*$ in $C^1(\overline\Omega)$ as $d_S\to 0$ by Lemma \ref{lem-1}, it follows from $I=\kappa \tilde I$ that $I \to 0$ in $C^1(\overline\Omega)$ as $d_S\to 0$.

(iii) Using \eqref{SI-6}, the fact that $\tilde I\to \tilde I^*$ and (i), we get
\begin{equation}
\begin{split}
N=\int_\Omega Idx+\int_\Omega Sdx&=\int_\Omega Idx +\frac{d_I}{\chi} \int_\Omega \left[e^{\frac{\kappa\chi}{d_S}(1-\tilde I)}-1 \right]dx \\
&\geq \frac{d_I}{2\chi}\int_{H^-}\left[e^{\frac{\kappa\chi}{d_S}(1-\tilde I^*)}-1 \right]dx>0
\end{split}
\label{lem2-2}
\end{equation}
for small $d_S>0$. This first  tells us that $\kappa/d_S$ is bounded for small $d_S>0$. Thus, after further passing to a subsequence of $d_S\to 0$ if necessary, we can assume that $\kappa/d_S\to M\geq 0$. Moreover, by sending $d_S\to 0$ in \eqref{lem2-2} and using (ii), we see that $M$ is determined via \eqref{M}. Obviously, such $M$ is unique and $M>0$. Finally, \eqref{S-star} can be seen from \eqref{SI-6} and Lemma \ref{lem-1}.
\end{proof}

\begin{remark}
Our results here match those of \cite[Lemma 4.5]{Allen} with $\chi=0$. In fact, if formally letting $\chi\to 0$ in \eqref{SI-5}, we obtain that
$$\frac{d_S S}{\kappa d_I}+\tilde I=1.$$
Upon an integration and using the conservative property of total population, we have
$$\frac{d_S}{\kappa d_I}\left(N-\int_\Omega Idx\right)=\int_\Omega(1-\tilde I)dx.$$
Since $I\to 0$ and $\tilde I\to \tilde I^*$ as $d_S\to 0$, it follows that
$$\frac{\kappa }{d_S}\to N^*:=\frac{N}{d_I\int_\Omega(1-\tilde I^*)dx}.$$
Furthermore, if we let $\chi\to 0$ in \eqref{S-star}, we formally obtain that
$$S\to S^*=d_I(1-\tilde I^*)N^*. $$
Those are the asympotics proved in  \cite[Lemma 4.5]{Allen}.
\end{remark}

In view of \eqref{sets-2} and \eqref{S-star}, it holds
\begin{equation}
J^-=\{x\in\overline\Omega:S^*(x)>0\} \quad \mbox{and}\quad J^+=\{x\in\overline\Omega:S^*(x)=0\}.
\nonumber
\end{equation}

Using the same arguments as those in \cite[Lemmas 4.6 and 4.7]{Allen}, one can further prove the following properties of the sets $J^+$ and $J^-$.

\begin{lemma}
Suppose that $\mathcal R_0>1$ and $\{x\in\overline\Omega:\beta(x)<\gamma(x)\}\neq \emptyset$. Then
$\emptyset\neq J^+\subset \overline{ H^+}$ and the set $J^+$ has positive measure. If we further assume that the set $H^0=\{x\in\Omega:\beta(x)=\gamma(x)\}$ consists of finitely many disjoint $C^1$-surfaces (or finitely many points if $n=1$, each of which is a simple root of $\beta-\gamma$). Then $\overline {H^-}$ is a proper subset of $J^-$. Moreover, $\tilde I^*\in C^2(J^-)$ satisfies $d_I\Delta \tilde I^*+(\beta-\gamma)\tilde I^*=0$ on $J^-$.
\end{lemma}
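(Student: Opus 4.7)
The plan is to adapt the approach of Allen et al.\ \cite[Lemmas 4.6 and 4.7]{Allen} to our cross-diffusion setting, leveraging the asymptotic information on $\tilde I$, $\kappa$, and $S$ provided by the preceding lemma. The backbone is the limiting equation on $J^-$. Since $\tilde I^*$ is continuous, $J^- = \{\tilde I^* < 1\}$ is open. On any compact $K \subset J^-$ we have $\tilde I^* \leq 1-\delta$ for some $\delta > 0$, so $S^* \geq c > 0$ on $K$ by \eqref{S-star}; since $I = \kappa \tilde I \to 0$ uniformly by part (ii) of the preceding lemma, $S/(S+I) \to 1$ uniformly on $K$. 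Dividing the $I$-equation of \eqref{EE-prob} by $\kappa$ and passing to the limit $d_S \to 0$ then yields $d_I \Delta \tilde I^* + (\beta - \gamma)\tilde I^* = 0$ classically on $J^-$, and interior elliptic regularity gives $\tilde I^* \in C^2(J^-)$.

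The non-emptiness and positivity of $|J^+|$ both follow by contradiction via $\mathcal R_0 > 1 \Leftrightarrow \lambda^* < 0$ in \eqref{eigen-prob}. If $J^+ = \emptyset$, the limiting PDE holds on all of $\overline\Omega$ with the Neumann condition inherited from Lemma \ref{lem-1}, which makes $\tilde I^* > 0$ a principal eigenfunction with eigenvalue $0$, forcing $\lambda^* = 0$, a contradiction. If $|J^+| = 0$, the limiting PDE holds on an open set of full measure; combined with $\tilde I^* \in C^1(\overline\Omega)$, the equation extends as a distributional solution on $\Omega$, which by elliptic regularity becomes a positive classical eigenfunction, again contradicting $\lambda^* < 0$. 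For $J^+ \subset \overline{H^+}$, suppose $x_0 \in J^+$ admits a neighborhood $U$ with $\beta \leq \gamma$ on $U$. Then $f(x, \tilde I) \leq 0$ on $U$ by \eqref{SI-7}, so $\tilde I$ is subharmonic on $U$ for every $d_S > 0$; in the limit, $\tilde I^*$ is subharmonic on $U \cap J^-$, attains the global maximum $1$ at $x_0$, and is strictly less than $1$ on $H^- \subset J^-$ by part (i) of the preceding lemma. The strong maximum principle on $U \cap J^-$ together with the Hopf lemma applied across $U \cap \partial J^-$ then yields a contradiction.

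The most delicate claim is $\overline{H^-} \subsetneq J^-$, for which the $C^1$-surface hypothesis on $H^0$ is indispensable. I would first establish the inclusion $\overline{H^-} \subset J^-$, i.e., $\tilde I^* < 1$ also on $\partial H^- \subset H^0$. Supposing $\tilde I^*(x_0) = 1$ at such a point $x_0$, the $C^1$-surface hypothesis provides an interior sphere in $H^-$ tangent to $H^0$ at $x_0$, while on $H^- \subset J^-$ the function $1 - \tilde I^* \geq 0$ is superharmonic (since $\beta - \gamma < 0$ there) with a zero at $x_0$; the Hopf boundary point lemma then forces $\frac{\partial(1-\tilde I^*)}{\partial \nu}(x_0) < 0$, contradicting $\nabla \tilde I^*(x_0) = 0$, which follows from $\tilde I^* \in C^1(\overline\Omega)$ attaining its global maximum at $x_0$. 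The strict inclusion then follows from yet another eigenvalue argument: if $J^- = \overline{H^-}$, then $\tilde I^* \equiv 1$ on $\overline\Omega \setminus H^-$, and the limiting PDE on $H^-$ with the Dirichlet condition $\tilde I^* = 1$ on $H^0 \cap \partial H^-$ and the Neumann condition on $\partial\Omega \cap \partial H^-$ would, via variational comparison with the principal eigenvalue of \eqref{eigen-prob} restricted to $H^-$ under these mixed boundary conditions, force a contradiction with $\mathcal R_0 > 1$. The main obstacle throughout is precisely the near-$\partial J^-$ and near-$H^0$ analysis, since the limiting PDE is only available on $J^-$ and only the $C^1$-convergence of Lemma \ref{lem-1} and the $C^1$-regularity of $H^0$ are at our disposal.
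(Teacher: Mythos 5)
Your overall architecture is the one the paper intends: the paper gives no proof here beyond invoking \cite[Lemmas 4.6 and 4.7]{Allen}, and your plan --- the limiting equation on $J^-$ via $S/(S+I)\to 1$ locally uniformly (which you justify correctly from \eqref{S-star} and $I=\kappa\tilde I\to 0$), the eigenvalue contradictions with $\lambda^*<0$ for $J^+\neq\emptyset$ and $|J^+|>0$, subharmonicity of $\tilde I$ on $\{\beta\le\gamma\}$ for $J^+\subset\overline{H^+}$, and a Hopf-type argument at $H^0$ for $\overline{H^-}\subset J^-$ --- is exactly that adaptation. The first paragraph (limiting PDE and $C^2$ interior regularity on $J^-$) and the $J^+\neq\emptyset$ argument are correct as written.

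Three of your steps, however, do not hold up as stated. First, for $|J^+|>0$ you claim that a $C^1$ function satisfying the limiting PDE on an open set of full measure satisfies it distributionally on all of $\Omega$; this implication is false in general (a primitive of the Cantor function is $C^1$ and harmonic off a null set, yet its distributional second derivative is a singular measure). The correct route is to pass to the limit in the weak formulation of \eqref{SI-8} before $d_S\to 0$: the term $\int_\Omega\tilde I\,f(x,\tilde I)\varphi$ converges by dominated convergence to $\int_\Omega\tilde I^*(\beta-\gamma)\varphi$ precisely because the set $J^+$, where $S/(S+I)$ has no identified limit, is null; alternatively one can invoke uniform $W^{2,p}$ bounds on $\tilde I$. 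Second, your contradiction for $J^+\subset\overline{H^+}$ does not close: the strong maximum principle gives $\tilde I^*\equiv 1$ on the component of $U$ containing $x_0$, but this only contradicts $\tilde I^*<1$ on $H^-$ if that component actually meets $H^-$, which need not happen from $U\cap H^+=\emptyset$ alone (e.g.\ if $U\subset H^0$); and the fallback appeal to a Hopf lemma ``across $U\cap\partial J^-$'' is unjustified because $\partial J^-$ is merely a level set of a $C^1$ function with no interior-ball regularity. Third, the same issue undermines your proof of $\overline{H^-}\subset J^-$: a $C^1$ hypersurface does \emph{not} satisfy the interior sphere condition (consider a graph like $y=|x|^{3/2}$), so the classical Hopf boundary point lemma cannot be applied at $x_0\in H^0$ on the strength of the $C^1$ hypothesis alone; one must exploit the nondegeneracy of $\beta-\gamma$ across $H^0$ (the ``simple root'' structure) to construct a barrier, or argue differently as in \cite{Allen}. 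These are genuine gaps rather than stylistic omissions, since in each case the stated justification would fail on a counterexample.
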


We summarize the findings above in the following theorem.

\begin{theorem}
Suppose that $\mathcal R_0>1$ and $\{x\in\overline\Omega:\beta(x)<\gamma(x)\}\neq \emptyset$.

\begin{itemize}
\item[{\rm (i)}] As $d_S\to 0$, any EE $(S,I)$ of the cross-diffusive SIS model \eqref{SIS-1} satisfies $(S,I)\to \left(S^*,0\right)$ in $C^1(\overline\Omega)$ with $S^*$ satisfying
\begin{equation}
\nonumber
S^*\geq 0,\quad \frac{\partial S^*}{\partial \nu}=0 \mbox{ on } \partial\Omega\quad  \mbox{ and } \quad  \int_\Omega S^*(x)dx=N.
\end{equation}
\item[{\rm (ii)}] The set $J^-:=\{x\in\overline\Omega:S^*(x)>0\}$ contains $H^-$;

\item[{\rm (iii)}] The set $J^+:=\{x\in\overline\Omega:S^*(x)=0\}$ has positive measure and it is contained in $\overline {H^+}$;

\item[{\rm (iv)}] If we further assume that the set $H^0=\{x\in\Omega:\beta(x)=\gamma(x)\}$ consists of finitely many disjoint $C^1$-surfaces (or finitely many points if $n=1$, each of which is a simple root of $\beta-\gamma$), then $\overline{H^-}\subset J^-$ and the set $J^-\setminus H^-$ has positive measure.

\end{itemize}

\end{theorem}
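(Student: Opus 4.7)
The plan is to assemble this summary from the sequence of lemmas just proved, so the proof largely consists of reconciling notations and invoking the right statement for each item. First I would fix a subsequence $d_S\to 0$ along which all the convergences from Lemma~\ref{lem-1} and the subsequent two lemmas hold simultaneously, so that the four conclusions below refer to a common limit $(S^*,0)$.

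For item (i), I would combine parts (ii) and (iii) of the preceding asymptotic lemma: part (ii) yields $\kappa\to 0$ and $I=\kappa\tilde I\to 0$ in $C^1(\overline\Omega)$, while part (iii) gives $S\to S^*$ in $C^1(\overline\Omega)$ via the explicit formula \eqref{S-star}. Nonnegativity of $S^*$ is immediate from that formula together with $\tilde I^*\le 1$ and $M>0$; the Neumann condition $\partial S^*/\partial\nu=0$ passes to the limit under $C^1$-convergence from the Neumann condition on $S$; and the mass identity $\int_\Omega S^*\,dx=N$ follows by sending $d_S\to 0$ in the conservation constraint $\int_\Omega(S+I)\,dx=N$, using $I\to 0$ uniformly.

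For items (ii) and (iii) the one point requiring verification is that the two formally different definitions of $J^\pm$ — the one in \eqref{sets-2} phrased in terms of $\tilde I^*$ and the one in the theorem statement phrased in terms of $S^*$ — agree. This is immediate from \eqref{S-star} together with $M>0$, which give the equivalence $S^*(x)>0\iff\tilde I^*(x)<1$. Under this identification, (ii) is exactly part (i) of the first post-remark lemma, and the nonemptiness, positive measure, and inclusion $J^+\subset\overline{H^+}$ asserted in (iii) are the content of the next lemma, which mirrors the corresponding argument of \cite{Allen}.

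For item (iv), I would appeal directly to the final lemma, which under the regularity assumption on $H^0$ provides $\overline{H^-}\subsetneq J^-$ together with the elliptic equation $d_I\Delta\tilde I^*+(\beta-\gamma)\tilde I^*=0$ on $J^-$; positive measure of $J^-\setminus H^-$ then follows from the strict containment and the $C^1$-regularity of the interface $H^0$, since $\tilde I^*<1$ on $J^-$ and $\tilde I^*$ is continuous across $H^0$. The bulk of the real work — the Harnack comparison, the $\kappa/d_S$ asymptotics, and the proof that $J^+$ actually has positive measure — is done in the preceding lemmas; the main potential pitfall in the present assembly, and essentially the only place where care is needed, is to ensure that subsequence selections are performed once and for all so that statements (i)–(iv) refer to the same limiting profile $S^*$.
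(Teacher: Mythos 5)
Your proposal is correct and follows essentially the same route as the paper, which offers no separate proof of this theorem but presents it explicitly as a summary of the preceding lemmas; your assembly of items (i)--(iv) from those lemmas, the identification $S^*(x)>0\iff\tilde I^*(x)<1$ via \eqref{S-star}, and the attention to fixing a single subsequence are all consistent with (and slightly more careful than) the paper's treatment.
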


\section{The Model with Varying Total Population}

In this section, we briefly discuss the model \eqref{SIS-2} with cross-diffusion and linear source.  We first notice that the global existence and boundedness of solutions have been included in  Theorem \ref{glo-sol}. Next,  we plan to discuss the global stability of the equilibria of system \eqref{SIS-2}.
It can be easily seen that the unique DFE is given by $(\tilde S,0)$, where $\tilde S$ is the unique positive solution of
\begin{equation}
\left\{ \begin{array}{ll}
\displaystyle d_S \Delta S+\Lambda(x)-S=0,&x\in\Omega,\\
\displaystyle \frac{\partial S}{\partial \nu}=0,&x\in\partial\Omega.
\end{array}\right.
\nonumber
\end{equation}

Similar to Theorem \ref{glo-sta-1}, we have
\begin{theorem}
\label{glo-sta-2}
If the basic reproduction number $\mathcal R_0<1$, then the unique global-in-time  bounded and classical solution $(S, I)$  of the cross-diffusive SIS model \eqref{SIS-2} satisfies $(S,I)\to (\tilde S,0)$ in $C(\overline\Omega)$ as $t\to\infty.$
\end{theorem}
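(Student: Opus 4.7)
The plan is to adapt the proof of Theorem \ref{glo-sta-1}, with the auxiliary quantity $w - N/|\Omega|$ used there replaced by $u := S + I - \tilde S$ here. Because the total population is no longer conserved for \eqref{SIS-2}, the limit cannot be pinned down by a Poincar\'{e}-type inequality against the spatial mean; instead, the limit is identified through direct $L^2$-integrability of $u$ in space--time, which becomes available thanks to the extra damping provided by the linear source $\Lambda - S$.

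First, since $\mathcal R_0 < 1$ is equivalent to the principal eigenvalue $\lambda^*$ of \eqref{eigen-prob} being positive, the comparison argument producing \eqref{I-decay} still yields $I(x,t) \leq M e^{-\lambda^* t}\phi^*(x)$, and multiplying the $I$-equation by $I$ and integrating exactly as in \eqref{nabla-I} gives $\int_0^\infty \int_\Omega |\nabla I|^2\, dx\, dt < \infty$. Using both PDEs of \eqref{SIS-2} together with the stationary identity $-d_S \Delta \tilde S + \tilde S = \Lambda$, a direct computation produces
\begin{equation*}
u_t - d_S \Delta u + u = \nabla \cdot \bigl((d_I - d_S + \chi S)\nabla I\bigr) + I, \qquad \frac{\partial u}{\partial \nu}\Big|_{\partial\Omega} = 0.
\end{equation*}
Testing this against $u$ and absorbing the cross-gradient term via Young's inequality (using the uniform $L^\infty$-bound of $S$ from Theorem \ref{glo-sol}) gives
\begin{equation*}
\frac{d}{dt} \int_\Omega u^2 + d_S \int_\Omega |\nabla u|^2 + \int_\Omega u^2 \leq C \int_\Omega |\nabla I|^2 + C \int_\Omega I^2.
\end{equation*}
Integrating in time furnishes simultaneously $\int_0^\infty \int_\Omega |\nabla u|^2 < \infty$ (the substitute for Lemma \ref{w-decay}) and, crucially, $\int_0^\infty \int_\Omega u^2 < \infty$. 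The argument of Lemma \ref{wt-decay}, applied to the $u$-equation and with the extra $-u + I$ terms controlled by these $L^2$-in-space--time bounds, then yields $\int_0^\infty \|u_t(\cdot,t)\|_{(H^1(\Omega))^*}^2\, dt < \infty$.

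With the three ingredients $\int_0^\infty \int_\Omega |\nabla u|^2$, $\int_0^\infty \int_\Omega u^2$ and $\int_0^\infty \|u_t\|_{(H^1)^*}^2$ all finite, the contradiction scheme of Theorem \ref{glo-sta-1} carries over almost verbatim. If one had $\|u(\cdot, t_k)\|_{C(\overline\Omega)} \geq \delta > 0$ along some sequence $t_k \to \infty$, then parabolic H\"{o}lder regularity together with Arzel\`{a}--Ascoli would extract a subsequence with $u(\cdot, t_k) \to u_\infty$ in $C(\overline\Omega)$; the weak-in-time estimate on $u_t$ forces $\int_{t_k}^{t_k+1} \|u(\cdot,t) - u_\infty\|_{(H^1)^*}^2\, dt \to 0$, while the space--time $L^2$-integrability of $u$ combined with the embedding $L^2 \hookrightarrow (H^1(\Omega))^*$ forces $\int_{t_k}^{t_k+1} \|u(\cdot,t)\|_{(H^1)^*}^2\, dt \to 0$, and uniqueness of the weak limit then gives $u_\infty \equiv 0$, a contradiction. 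Combined with $\|I(\cdot,t)\|_{L^\infty} \to 0$, this produces $S(\cdot, t) \to \tilde S$ uniformly on $\overline\Omega$. The main obstacle distinguishing this from Theorem \ref{glo-sta-1} is the loss of the conservation law that was previously used to identify the weak limit; this is precisely circumvented by the $-u$ term appearing in the $u$-equation (inherited from the $-S$ in the linear source), which delivers the required direct $L^2$-integrability of $u$ in space--time.
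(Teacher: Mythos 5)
Your proposal is correct and follows essentially the same route as the paper's proof: the paper also works with the shifted quantity $v=S-\tilde S+I$, derives the same damped parabolic equation for it, establishes $\int_0^\infty\int_\Omega|\nabla v|^2<\infty$ and the weak stabilization $\int_0^\infty\|v_t\|_{(H^1(\Omega))^*}^2\,dt<\infty$, and concludes by the same contradiction/compactness scheme as in Theorem \ref{glo-sta-1}. The only (harmless) deviation is how the space--time $L^2$-integrability of the shifted quantity is obtained: you read it off directly from the damping term $+\int_\Omega u^2$ in the energy identity, while the paper first integrates the equation over $\Omega$ to show that the spatial average of $v$ decays exponentially and then combines this with the Poincar\'e inequality and the $\nabla v$ bound; your shortcut is valid and slightly more direct.
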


\begin{proof}
 Let  $v(x,t)=S(x,t)-\tilde S(x)+I(x,t)$. Then straightforward calculations from \eqref{SIS-2}  show that $v$ satisfies
\begin{equation}
\left\{ \begin{array}{lll}
\displaystyle v_t=d_S\Delta v+\nabla \cdot \left( (d_I-d_S)\nabla I+\chi S\nabla I \right)-v+I,&x\in\Omega, t>0,\\
\displaystyle \frac{\partial v}{\partial \nu}=0,&x\in\partial \Omega, t>0,\\
v(x,0)=S_0(x)-\tilde S(x)+I_0(x),&x\in\Omega.
\end{array}\right.
\label{v}
\end{equation}
We multiply the first equation in \eqref{v} by $v$, integrate by parts and employ the Cauchy-Schwarz inequality to obtain
\begin{align*}
\frac{1}{2}\frac{d}{dt}\int_\Omega v^2dx&=-d_S\int_\Omega |\nabla v|^2dx-(d_I-d_S)\int_\Omega \nabla I\cdot\nabla vdx\\
&\ \ \ -\chi \int_\Omega S\nabla I\cdot\nabla vdx-\int_\Omega v^2dx+\int_\Omega Ivdx\\
 &\leq -d_S\int_\Omega |\nabla v|^2dx+\frac{d_S}{4}\int_\Omega |\nabla v|^2dx+\frac{(d_I-d_S)^2}{d_S}\int_\Omega |\nabla I|^2dx\\
&\ \ +\frac{d_S}{4}\int_\Omega |\nabla v|^2dx+\frac{\chi^2}{d_S}\int_\Omega S^2|\nabla I|^2dx- \int_\Omega v^2dx+\frac{1}{2}\int_\Omega v^2dx+\frac{1}{2}\int_\Omega I^2dx \\
&\leq -\frac{d_S}{2}\int_\Omega |\nabla v|^2dx+ \left[\frac{(d_I-d_S)^2}{d_S}+\frac{\chi^2\|S(\cdot,t)\|^2_{L^\infty(\Omega)}}{d_S} \right]\int_\Omega |\nabla I|^2dx+\frac{1}{2}\int_\Omega I^2dx.
\end{align*}
 The fact that $\mathcal R_0<1$ shows that \eqref{I-decay} and \eqref{nabla-I} remain valid.  Then upon an integration in the time  variable and in view of \eqref{I-decay} and \eqref{nabla-I}, we find that
\begin{equation}
\int_0^\infty \int_\Omega |\nabla v|^2dxdt<\infty.
\label{thm6-1}
\end{equation}
 Next, we shall prove a weak stabilization of $v_t$, an analog of Lemma \ref{wt-decay}. To this purpose, we first integrate by \textcolor{red} {parts} both sides of the PDE in \eqref{v}, we end up with
\begin{equation}
\frac{d}{dt}\int_\Omega v(x,t)dx=-\int_\Omega v(x,t)dx+\int_\Omega I(x,t)dx.
\label{thm6-2}
\end{equation}
Thanks to the exponential decay of $I$ in \eqref{I-decay}, we can easily infer from \eqref{thm6-2} that $\int_\Omega v(\cdot,t)$  decays exponentially to zero, and hence, its average  $\bar v(t):=\frac{1}{|\Omega|}\int_\Omega v(\cdot,t)$ decays also exponentially to zero as $t\to\infty.$ According to the Poincar\'{e} inequality, there exists some generic positive constant $C$ such that
\begin{equation*}
\|v(\cdot,t)\|_{L^2(\Omega)}\leq \|v(\cdot,t)-\bar v(t)\|_{L^2(\Omega)}+ \|\bar v(t)\|_{L^2(\Omega)}
 \leq C\|\nabla v(\cdot,t)\|_{L^2(\Omega)} +\|\bar v(t)\|_{L^2(\Omega)}.
\end{equation*}
By the exponential decay of $\bar v(t)$ and \eqref{thm6-1}, it then follows that
\begin{equation}
\int_0^\infty \|v(\cdot,t)\|^2_{L^2(\Omega)}dt \leq 2C^2 \int_0^\infty \|\nabla v(\cdot,t)\|^2_{L^2(\Omega)}dt+2|\Omega|\int_0^\infty |\bar v(t)|^2 dt <\infty.
\label{thm6-3}
\end{equation}
Now,  for any test function $\varphi\in H^1(\Omega)$, we deduce from \eqref{v} that
\be
\nonumber
\begin{split}
\int_\Omega v_t\varphi dx&=-d_S \int_\Omega \nabla v\cdot\nabla \varphi dx -\int_\Omega v\varphi dx+\int_\Omega I\varphi dx \\
&\ \ \ -\int_\Omega \left[(d_I-d_S)\nabla I+\chi S\nabla I\right]\cdot \nabla \varphi dx\\
&\leq d_S \|\nabla v(\cdot,t)\|_{L^2(\Omega)}  \|\nabla \varphi \|_{L^2(\Omega)}+ \left(\|v(\cdot,t)\|_{L^2(\Omega)}+\|I(\cdot,t)\|_{L^2(\Omega)} \right) \|\varphi\|_{L^2(\Omega)} \\
&\quad + \left[(d_I+d_S) \|\nabla I(\cdot,t)\|_{L^2(\Omega)}+\chi \|S(\cdot,t)\|_{L^\infty(\Omega)} \|\nabla I(\cdot,t)\|_{L^2(\Omega)} \right] \|\nabla \varphi \|_{L^2(\Omega)}\\
&\leq \big[d_S \|\nabla v(\cdot,t)\|_{L^2(\Omega)} +\left(d_I+d_S+ \chi \|S(\cdot,t)\|_{L^\infty(\Omega)}\right) \|\nabla I(\cdot,t)\|_{L^2(\Omega)} \\
&\quad +\|v(\cdot,t)\|_{L^2(\Omega)}+\|I(\cdot,t)\|_{L^2(\Omega)}\big]
\| \varphi \|_{H^1(\Omega)},
\end{split}
\ee
which gives
\begin{align*}
\left\| v_t(\cdot,t)\right\|_{\left(H^1(\Omega)\right)^*}&\leq d_S \|\nabla v(\cdot,t)\|_{L^2(\Omega)}+\left(d_I+d_S+ \chi \|S(\cdot,t)\|_{L^\infty(\Omega)}\right) \|\nabla I(\cdot,t)\|_{L^2(\Omega)}\\
&\quad +\|v(\cdot,t)\|_{L^2(\Omega)}+\|I(\cdot,t)\|_{L^2(\Omega)}.
\end{align*}
Combining this with  \eqref{I-decay}, \eqref{nabla-I}, \eqref{thm6-1} and \eqref{thm6-3}, we derive that
\begin{equation}
\int_0^\infty \left\| v_t(\cdot,t)\right\|_{\left(H^1(\Omega)\right)^*}^2dt<\infty.
\label{thm6-4}
\end{equation}
With the help of \eqref{thm6-1} and \eqref{thm6-4}, combined with the exponential decay of $\int_\Omega v(\cdot,t)$, using an argument similar to that  of Theorem \ref{glo-sta-1}, one can readily show that in fact $v(\cdot,t)\to 0$ in $C(\overline\Omega)$ as $t\to\infty$. Since we have already known $I(\cdot,t)\to 0$ uniformly, then it follows $S(\cdot,t)=v(\cdot,t)+\tilde S-I(\cdot,t)\to \tilde S$ uniformly on $\overline\Omega$ as $t\to\infty.$
\end{proof}

In the case $\mathcal R_0>1$, we have
\begin{theorem}
\label{sta-ee-2}
Suppose that all of $\beta$, $\gamma$ and $\Lambda$ are all positive constants and that $\beta>\gamma$ (so that $\mathcal R_0=\beta/\gamma>1$). Then there exists a positive constant $M_1$ depending only on $n,\Omega, \beta,\gamma, d_I$ and $\hat N$ such that whenever $0\leq\chi< \chi_0:=M_1 \sqrt{d_S}$, the unique classical global-in-time  solution $(S,I)$ of \eqref{SIS-2} converges uniformly to the unique EE $(\hat S,\hat I)$ in the following fashion:
\be
\nonumber
\lim_{t\rightarrow \infty}\left(\left\|S(\cdot, t)-\hat S\right\|_{L^\infty(\Omega)}+\left\|I(\cdot, t)-\hat I\right\|_{L^\infty(\Omega)}\right)=0,
\ee
where
$$(\hat S,\hat I)=\left(\Lambda,\frac{\beta-\gamma}{\gamma}\Lambda\right).$$
\end{theorem}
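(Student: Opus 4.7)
The plan is to adapt the Lyapunov functional argument used for Theorem~\ref{sta-ee}(i), with two structural modifications reflecting the presence of the linear source $\Lambda-S$ in \eqref{SIS-2}. First, the explicit formulas for $(\hat S,\hat I)$ give the identities $\hat S=\Lambda$ and $\beta\hat S/(\hat S+\hat I)=\gamma$, so the reaction-term computation reproduces \eqref{dVdt} verbatim while the source contributes an additional strictly dissipative term. Secondly, the absence of a mass conservation law will be compensated by the two new negative contributions to $dV/dt$, from which $L^2$-decay of $S-\hat S$ and of $\hat IS-\hat SI$ follows directly, and hence $L^2$-decay of $I-\hat I$ by a simple algebraic identity.

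Concretely, I would employ the functional
\[
V(t):=\int_\Omega\left[\left(S-\hat S-\hat S\ln\frac{S}{\hat S}\right)+\left(I-\hat I-\hat I\ln\frac{I}{\hat I}\right)\right]dx,
\]
which is nonnegative and vanishes precisely at $(\hat S,\hat I)$. Integration by parts, together with $\Lambda=\hat S$ and $\gamma=\beta\hat S/(\hat S+\hat I)$, yields
\[
\begin{aligned}
\frac{dV}{dt}=&-d_S\hat S\int_\Omega\frac{|\nabla S|^2}{S^2}dx-\chi\hat S\int_\Omega\frac{\nabla S\cdot\nabla I}{S}dx-d_I\hat I\int_\Omega\frac{|\nabla I|^2}{I^2}dx\\
&-\int_\Omega\frac{(S-\hat S)^2}{S}dx-\int_\Omega\frac{\beta(\hat IS-\hat SI)^2}{S(S+I)(\hat S+\hat I)}dx.
\end{aligned}
\]
The first three terms constitute an indefinite quadratic form in $(\nabla S/S,\nabla I)$. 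I would split $-d_S\hat S|\nabla S/S|^2$ as $-(1-\theta)d_S\hat S|\nabla S/S|^2-\theta d_S\hat S|\nabla S/S|^2$ for some $\theta\in(0,1)$, complete the square on the first piece against the mixed term, and use the $\chi$-independent bound $\|I(\cdot,t)\|_{L^\infty(\Omega)}\le M_I$ provided by \eqref{I-ub} to estimate $|\nabla I|^2/I^2\ge|\nabla I|^2/M_I^2$. This yields strict negativity of the gradient part provided $\chi^2\hat S/[4(1-\theta)d_S]<d_I\hat I/M_I^2$, which reduces to $\chi<M_1\sqrt{d_S}$ with $M_1=2M_I^{-1}\sqrt{d_I(\beta-\gamma)/\gamma}$.

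Using the $L^\infty$-bounds of Theorem~\ref{glo-sol} to turn the $1/S$ and $1/[S(S+I)]$ weights into positive constants, I would then obtain
\[
\frac{dV}{dt}\le -c_0\left\{\int_\Omega|\nabla S|^2dx+\int_\Omega|\nabla I|^2dx+\int_\Omega(S-\hat S)^2dx+\int_\Omega(\hat IS-\hat SI)^2dx\right\}
\]
for some $c_0>0$. Since $V\ge 0$, integrating in $t$ from any $t_0>0$ shows that each of the four nonnegative quantities on the right lies in $L^1(t_0,\infty)$. Standard parabolic Schauder/H\"older estimates, combined with the $L^\infty$-bounds of Theorem~\ref{glo-sol}, provide a uniform $C^{2+\theta,1+\theta/2}$ estimate on $(S,I)$ for large $t$, so the integrands are uniformly continuous in $t$ and therefore each tends to zero as $t\to\infty$. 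In particular $\|S-\hat S\|_{L^2(\Omega)}\to 0$ and $\|\hat IS-\hat SI\|_{L^2(\Omega)}\to 0$. The algebraic identity $\hat S(I-\hat I)=\hat I(S-\hat S)-(\hat IS-\hat SI)$ then produces $\|I-\hat I\|_{L^2(\Omega)}\to 0$, and the Gagliardo--Nirenberg interpolation between this $L^2$-decay and the uniform $W^{1,\infty}$-bound, exactly as at the end of the proof of Theorem~\ref{sta-ee}(i), upgrades convergence to $L^\infty$.

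The step I expect to demand the most care is extracting $L^2$-control of $I-\hat I$: for \eqref{SIS-1} this was obtained via the conservation law $\int(S+I)=N$ combined with Poincar\'e, but neither tool is available here. The key observation that resolves this is that the source-generated term $-\int(S-\hat S)^2/S$ and the reaction-generated term $-\int(\hat IS-\hat SI)^2/[S(S+I)(\hat S+\hat I)]$, which simultaneously arise from the chosen $V$, already pin $(S,I)$ down pointwise in $L^2$ via the displayed algebraic identity, so no conservation law is needed. Designing $V$ so that both ingredients appear together is the substantive insight; once available, the remainder of the argument is a routine adaptation of Theorem~\ref{sta-ee}(i).
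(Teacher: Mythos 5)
Your proposal is correct and follows essentially the same route as the paper, which simply states that the proof uses the same Lyapunov functional as Theorem \ref{sta-ee}(i) and omits the details. You have in fact supplied the one genuinely non-obvious detail of that adaptation correctly: the source term $\Lambda-S=\hat S-S$ and the reaction term together produce the dissipative contributions $-\int_\Omega (S-\hat S)^2/S\,dx$ and $-\int_\Omega \beta(\hat I S-\hat S I)^2/[S(S+I)(\hat S+\hat I)]\,dx$, which replace the conservation-law-plus-Poincar\'e step of the first model in pinning down $\|I-\hat I\|_{L^2(\Omega)}$.
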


\begin{proof}
The proof is very similar to that of Theorem \ref{sta-ee} (i) by using the same Lyapunov functional; the details are thus omitted here.
\end{proof}

In the general situation, as in proving Theorem \ref{persist thm} for the system \eqref{SIS-1},
we can employ the abstract dynamical systems theory to conclude the uniform persistence property for the system \eqref{SIS-2}.
That is, we can state

\begin{theorem}\label{persist thm2} Let $(u_0,v_0)$ obey \eqref{initial} and $\mathcal R_0>1$. Then system \eqref{SIS-2} is uniformly persistent, i.e., there exists some $\eta>0$, independent of $(u_0,v_0)$, such that
\be
\nonumber
\liminf_{t\rightarrow \infty} S(x,t)\geq \eta \mbox{ and } \liminf_{t\rightarrow \infty} I(x,t)\geq \eta \text{ uniformly for } x\in\overline{\Omega}.
\ee
Furthermore, \eqref{SIS-2} admits at least an EE $({S},{I})$.
\end{theorem}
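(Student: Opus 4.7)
The plan is to run verbatim the Magal--Zhao abstract persistence machinery (as in \cite[Theorem 4.5]{MZ}, \cite[Chapter 13]{Zhao-bk}, and \cite[Theorem 3.3]{Peng-Zhao-2012}) for the semiflow $\Phi_t:(S_0,I_0)\mapsto(S(\cdot,t),I(\cdot,t))$ generated by \eqref{SIS-2} on the state space
\[
X:=\{(S_0,I_0)\in C(\overline\Omega)\times W^{1,\infty}(\Omega):S_0\geq 0,\ I_0\geq 0\},
\]
partitioned as $X=X_0\cup\partial X_0$ with $X_0:=\{(S_0,I_0)\in X:I_0\not\equiv 0\}$ and $\partial X_0:=\{(S_0,I_0)\in X:I_0\equiv 0\}$. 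The crucial difference from the proof for \eqref{SIS-1} is that the total mass is no longer conserved, but Theorem~\ref{glo-sol} (in particular \eqref{bdd}) supplies exactly the ultimate $L^\infty$-bound, independent of the initial data, that replaces mass conservation in the dissipativity step.

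First I would check the structural prerequisites: (i) $\Phi_t$ is well-defined and continuous on $X$ (Lemma~\ref{loc-sol}); (ii) $\Phi_t$ is point dissipative with a compact global attractor of bounded sets, which follows from \eqref{bdd} together with parabolic smoothing (the higher Hölder bound used in Theorem~\ref{glo-sta-1}); (iii) $X_0$ is positively invariant --- the strong maximum principle applied to the $I$-equation ensures $I(\cdot,t)>0$ on $\overline\Omega$ for $t>0$ whenever $I_0\not\equiv 0$ --- and $\partial X_0$ is positively invariant since $I\equiv 0$ reduces \eqref{SIS-2} to $S_t=d_S\Delta S+\Lambda-S$ under Neumann conditions, whose unique globally attracting equilibrium is $\tilde S$. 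Thus on $\partial X_0$ the omega-limit set of every orbit is $\{(\tilde S,0)\}$, which serves as the isolated invariant set $M_1$ in the abstract setting.

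Second, I would establish uniform weak repulsion of $(\tilde S,0)$ in $X_0$: there exists $\delta_0>0$ such that
\[
\limsup_{t\to\infty}\bigl\|\Phi_t(S_0,I_0)-(\tilde S,0)\bigr\|_{L^\infty}\geq\delta_0\quad\text{for every }(S_0,I_0)\in X_0.
\]
The hard step is here, and it is the same type of argument as for \eqref{SIS-1}: arguing by contradiction, if the trajectory stayed near $(\tilde S,0)$, then for any $\varepsilon>0$ one would have $S/(S+I)\geq 1-\varepsilon$ eventually, so the $I$-equation would be dominated from below by a linear parabolic problem whose principal eigenvalue $\lambda^*(\varepsilon)$ of $-d_I\Delta+(\gamma-(1-\varepsilon)\beta)$ under Neumann conditions converges, as $\varepsilon\to 0$, to the principal eigenvalue $\lambda^*$ of \eqref{eigen-prob}. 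Since $\mathcal R_0>1$ forces $\lambda^*<0$, choosing $\varepsilon$ small makes $\lambda^*(\varepsilon)<0$, so comparison with an exponentially growing sub-solution (built from the positive principal eigenfunction, as in the derivation of \eqref{I-decay} but with the opposite sign) yields $I(\cdot,t)\to\infty$, a contradiction.

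Finally, the existence and invariance criteria together with weak repulsion feed directly into \cite[Theorem 4.5]{MZ}: the semiflow is uniformly persistent with respect to $(X_0,\partial X_0)$, giving the existence of $\eta>0$ with the stated liminf property. Then strong uniform persistence plus the compact global attractor yields, by the fixed-point theorem of \cite[Theorem 4.7]{MZ} applied to the Poincaré-type map, a coexistence equilibrium, i.e., an EE $(S,I)$ of \eqref{SIS-2} with $S,I>0$ on $\overline\Omega$. The main obstacle is the weak-repeller estimate, because the cross-diffusion term $\chi\nabla\cdot(S\nabla I)$ precludes a naive comparison in the $S$-component; however, since we only need to compare $I$-equations and the cross-diffusion does not appear there, the eigenvalue perturbation argument above passes through unchanged.
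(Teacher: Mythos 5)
Your proposal follows exactly the route the paper intends: the paper gives no proof of this theorem beyond citing the Magal--Zhao persistence framework as adapted in \cite{Peng-Zhao-2012}, and your write-up (point dissipativity from the initial-data-independent bound \eqref{bdd} replacing mass conservation, invariance of the boundary set $\partial X_0$ with $\{(\tilde S,0)\}$ as the isolated invariant set, and the weak-repeller estimate via perturbation of the principal eigenvalue of \eqref{eigen-prob}, which is indeed unaffected by the cross-diffusion since the $I$-equation carries no $\chi$-term) is a faithful and more detailed rendering of that same argument. The only step left implicit --- also left implicit by the paper --- is the upgrade from abstract persistence with respect to the distance to $\partial X_0$ to the pointwise, uniform-in-$x$ lower bounds on both $S$ and $I$, which is carried out as usual via the Harnack inequality for the $I$-equation and a comparison argument for the $S$-equation.
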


Hence, when $\mathcal R_0>1$, Theorem \ref{persist thm2} ensures the existence of EE to \eqref{SIS-2}, which solves the following elliptic problem:
\begin{equation}
\left\{ \begin{array}{lll}
\displaystyle d_S\Delta S+\chi \nabla\cdot (S\nabla I)-\beta(x) \frac{SI}{S+I}+\gamma I+\Lambda(x) - S =0,&x\in\Omega,\\
\displaystyle d_I\Delta I+\beta(x) \frac{SI}{S+I}-\gamma(x) I=0,&x\in\Omega,\\
\displaystyle \frac{\partial S}{\partial \nu}=\frac{\partial I}{\nu}=0,&x\in\partial \Omega.
\end{array}\right.
\label{EE-prob2}
\end{equation}
For \eqref{EE-prob2}, we only capture   the following information about the asymptotic profile of EE of \eqref{SIS-2}  as $d_S\to 0$, which is  poorer  than that of \eqref{EE-prob}.
\begin{theorem}
\label{asy-ee-prob2}
Assume that $\mathcal R_0>1$. Fix $d_I>0$ and let $d_S\to 0$, then every positive solution $(S,I)$ of the problem  \eqref{EE-prob2} satisfies (up to a subsequence of $d_S\to 0$)
$$I\to I^*\ \ \mbox{in}\ C^1(\overline\Omega)$$
for some positive function $I^*\in C^1(\overline\Omega)$.
\end{theorem}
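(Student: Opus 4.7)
The plan is to combine uniform $d_S$-independent a priori estimates on $I$ with a non-degeneracy argument that rules out the limit $I^*\equiv 0$; the latter will use the characterization $\mathcal R_0>1\Leftrightarrow \lambda^*<0$, where $\lambda^*$ is the principal Neumann eigenvalue in \eqref{eigen-prob}.

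For Step~1 (uniform estimates and extraction of a limit), the $I$-equation in \eqref{EE-prob2} reads $-d_I\Delta I=[\beta S/(S+I)-\gamma]I$ with the coefficient bounded by $\beta^*+\gamma^*$ uniformly in $d_S$. The Harnack inequality used in the proof of Theorem~\ref{asym-thm1} (cf.\ \cite[Lemma~2.2]{Peng-JDE09}) then yields $\max_{\overline\Omega}I\le C\min_{\overline\Omega}I$ with $C$ independent of $d_S$, and combined with the uniform $L^1$-bound $\int_\Omega I\le \hat N$ supplied by \eqref{L1-bdd2} this gives $\|I\|_{L^\infty(\Omega)}\le C_0$ uniformly in $d_S$. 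Because $d_I$ is fixed, $L^p$-elliptic theory followed by Sobolev embedding upgrades this to $\|I\|_{C^{1+\alpha}(\overline\Omega)}\le C_1$ uniformly in $d_S$. Arzel\`a-Ascoli then delivers a subsequence $d_{S,n}\to 0$ along which $I_n\to I^*$ in $C^1(\overline\Omega)$ with $I^*\ge 0$ and $\partial I^*/\partial\nu=0$, and passing the Harnack inequality to the limit produces the dichotomy: either $I^*>0$ on all of $\overline\Omega$, or $I^*\equiv 0$.

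The main obstacle (Step~2) is to exclude the alternative $I^*\equiv 0$. Suppose for contradiction that $\|I_n\|_{L^\infty}\to 0$; then by $C^1$-convergence also $\|\nabla I_n\|_{L^\infty}\to 0$, and the $I$-equation itself yields $\|\Delta I_n\|_{L^\infty}\le (\beta^*+\gamma^*)\|I_n\|_{L^\infty}/d_I\to 0$. The crux is to establish a $d_S$-independent lower bound $\min_{\overline\Omega}S_n\ge \epsilon>0$ for all large $n$. I would argue by contradiction: if $m_n:=S_n(x_n)=\min_{\overline\Omega}S_n\to 0$ with $x_n\to x_0\in\overline\Omega$, expand $\chi\nabla\cdot(S\nabla I)=\chi\nabla S\cdot\nabla I+\chi S\Delta I$ and evaluate the $S$-equation at an interior minimum $x_n$, where $\nabla S_n(x_n)=0$ and $\Delta S_n(x_n)\ge 0$. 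The estimates $S_n(x_n)|\Delta I_n(x_n)|\to 0$ (from $S_n(x_n)\to 0$ and $\|\Delta I_n\|_{L^\infty}\to 0$) together with $|\beta S_nI_n/(S_n+I_n)|+\gamma I_n\le C\|I_n\|_{L^\infty}\to 0$ reduce the $S$-equation at $x_n$ to
\[
d_{S,n}\Delta S_n(x_n)=-\Lambda(x_0)+o(1)<0,
\]
contradicting $\Delta S_n(x_n)\ge 0$ since $\Lambda\ge \Lambda_*>0$. Boundary minima are treated by a standard local reflection across $\partial\Omega$ (enabled by the Neumann condition on $S_n$), which reduces them to the interior case.

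Once $\min S_n\ge \epsilon$ uniformly, Step~3 closes the contradiction via an eigenvalue-continuity argument. Rewriting the $I$-equation as $(-d_I\Delta+V_n)I_n=0$ with
\[
V_n:=\gamma-\beta\frac{S_n}{S_n+I_n}=(\gamma-\beta)+\beta\frac{I_n}{S_n+I_n},
\]
one has $\|V_n-(\gamma-\beta)\|_{L^\infty(\Omega)}\le (\beta^*/\epsilon)\|I_n\|_{L^\infty(\Omega)}\to 0$. Because $I_n>0$ is a positive eigenfunction of $-d_I\Delta+V_n$ under Neumann conditions, it is automatically the principal one, so $\mu_1(-d_I\Delta+V_n)=0$. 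Standard $L^\infty$-continuity of the principal Neumann eigenvalue with respect to the zeroth-order potential then forces $0=\mu_1(-d_I\Delta+V_n)\to \mu_1(-d_I\Delta+\gamma-\beta)=\lambda^*$, contradicting $\lambda^*<0$. Hence $I^*\not\equiv 0$, and by the Harnack dichotomy $I^*>0$ on $\overline\Omega$, completing the proof.
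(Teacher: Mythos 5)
Your proof is correct, and while its skeleton (uniform a priori bounds on $I$, extraction of a $C^1$-limit, Harnack dichotomy, and a contradiction with $\lambda^*<0$ when $I^*\equiv 0$) coincides with the paper's, the execution of the contradiction step is genuinely different. The paper first upgrades the behaviour of $S$ all the way to $S\to\Lambda$ uniformly on $\overline\Omega$ by a singular perturbation argument applied to \eqref{thm7-3} (as in \eqref{singular}, citing \cite[Lemma 2.4]{DPW} and \cite{HLM}), then normalizes $h=I/\|I\|_{L^\infty(\Omega)}$ and uses Harnack plus elliptic estimates to pass to a positive limit $\tilde h$ solving $d_I\Delta\tilde h+(\beta-\gamma)\tilde h=0$, which forces $\lambda^*=0$. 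You instead extract only a uniform positive lower bound $S\ge\epsilon$ by evaluating the expanded $S$-equation at a minimum point (for the boundary case, \cite[Proposition 2.2]{Lou-Ni} --- already invoked by the paper for \eqref{S-eqn2} --- does exactly this job and is cleaner than an ad hoc reflection), and you replace the normalization argument by the $L^\infty$-Lipschitz continuity of the principal Neumann eigenvalue in the potential, together with the observation that a positive solution of $(-d_I\Delta+V_n)I_n=0$ forces $\mu_1(V_n)=0$. Your route is more elementary in that it avoids both the singular perturbation lemma and the compactness argument for $h$; the paper's route yields the stronger by-product $S\to\Lambda$. One small repair: the uniform $L^1$-bound on $I$ should not be quoted from the parabolic estimate \eqref{L1-bdd2}, which is vacuous when applied to a steady state (the initial data then equal the solution itself); it follows instead by integrating the two equations of \eqref{EE-prob2}, which gives $\int_\Omega S\,dx=\int_\Omega\Lambda\,dx$ and $\gamma_*\int_\Omega I\,dx\le\beta^*\int_\Omega\Lambda\,dx$, exactly as the paper does.
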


\begin{proof}
We integrate by parts from \eqref{EE-prob2} to obtain
\begin{equation}
-\int_\Omega \beta \frac{SI}{S+I}dx+\int_\Omega \gamma Idx+\int_\Omega \Lambda dx-\int_\Omega Sdx=0,
\nonumber
\end{equation}
and
\begin{equation}
\int_\Omega \beta\frac{SI}{S+I}dx-\int_\Omega\gamma Idx=0.
\label{thm7-1}
\end{equation}
Adding the two identities, we see
$$\int_\Omega S dx=\int_\Omega \Lambda dx.$$
Then  \eqref{thm7-1} moreover gives
\begin{eqnarray}
\gamma_* \int_\Omega I dx\leq \int_\Omega \gamma I dx=\int_\Omega \beta\frac{SI}{S+I}dx\leq \beta^* \int_\Omega S dx = \beta^* \int_\Omega \Lambda dx.
\label{thm7-2}
\end{eqnarray}
Thus, the $L^1$-norm of $S$ and $I$ is uniformly bounded  with respect to $d_S>0$.

Now,  thanks to \eqref{thm7-2}, as in the proof of Theorem \ref{asym-thm1},
after passing to a subsequence of $d_S\to 0$, it holds
\begin{equation}
I\to I^*\geq 0\ \mbox{ in } C^1(\overline\Omega),\ \  \mbox{ as } d_S\to 0,
\nonumber
\end{equation}
for some $I^*\in C^1(\overline\Omega)$.
Furthermore, we have the dichotomy:
\begin{equation}
I^*>0\ \mbox{ on }\overline\Omega\quad \mbox{or}\quad I^*\equiv0.
\nonumber
\end{equation}
Assume that $I^*\equiv 0$; that is, $I\to 0$ in $C^1(\overline\Omega)$ as $d_S\to 0$. Expanding out the cross-diffusive term in the first equation of \eqref{EE-prob2} and using the second equation, we see that $S$ satisfies
\begin{equation}
\left\{ \begin{array}{ll}
\displaystyle d_S\Delta S+\chi \nabla I\cdot \nabla S+\left(\gamma-\frac{\beta S}{S+I}\right) \left(1+\frac{\chi S}{d_I}\right)I+\Lambda -S=0,&x\in\Omega,\\
\displaystyle \frac{\partial S}{\partial \nu}=0,&x\in\partial\Omega.
\end{array}\right.
\label{thm7-3}
\end{equation}
Then, since $I\to 0$ in $C^1(\overline\Omega)$ as $d_S\to 0$, as in the proof of \eqref{singular}, a singular perturbation argument applied to \eqref{thm7-3} yields that
$$
S\to \Lambda \quad \mbox{uniformly on } \overline\Omega,\ \ \ \mbox{as}\ d_S\to0.
$$

Now, to proceed, we  let
$$h=\frac{I}{\|I\|_{L^\infty(\Omega)}}>0.$$
Then $\|h\|_{L^\infty(\Omega)}=1$ and it can be easily seen that $h$ satisfies
\begin{equation}
\left\{ \begin{array}{ll}
\displaystyle d_I\Delta h+\left(\beta\frac{S}{S+I}-\gamma\right)h=0,&x\in\Omega,\\
\displaystyle \frac{\partial h}{\partial \nu}=0,&x\in\partial\Omega.
\end{array}\right.
\label{thm7-4}
\end{equation}
The Harnack inequality \cite[Lemma 2.2]{Peng-JDE09} tells us that
\begin{equation}
\max_{\overline\Omega} h\leq C\min_{\overline\Omega}h
\label{thm7-5}
\end{equation}
for some positive constant $C$ independent of $d_S>0$.
Since $h$ is uniformly bounded by $1$, after passing to a subsequence of $d_S\to 0$, we have, as before,
\begin{equation}
h\to \tilde h\quad \mbox{in } C^1(\overline\Omega),
\nonumber
\end{equation}
for some $0\leq\tilde h\in C^1(\overline\Omega)$ with $\|\tilde h\|_{L^\infty(\Omega)}=1.$ In light  of the Harnack inequality \eqref{thm7-5},
it is necessary that $\tilde h>0$ on $\overline\Omega$. On the other hand, since $S\to \Lambda>0$ and $I\to 0$ uniformly on $\overline\Omega$ as $d_S\to 0$, it follows from \eqref{thm7-4} that $\tilde h$ satisfies
\begin{equation}
\left\{ \begin{array}{ll}
\displaystyle d_I\Delta\tilde h+\left(\beta-\gamma\right)\tilde h=0,&x\in\Omega,\\
\displaystyle \frac{\partial\tilde h}{\partial \nu}=0,&x\in\partial\Omega.
\end{array}\right.
\nonumber
\end{equation}
This says that the principal eigenvalue $\lambda^*$ of the eigenvalue problem \eqref{eigen-prob} is zero, contradicting $\mathcal R_0>1$ and the fact that $1-\mathcal R_0$ and $\lambda^*$ have the same sign.
Therefore, it holds
\begin{equation}
I\to I^*>0\ \mbox{ in } C^1(\overline\Omega),\ \ \mbox{ as } d_S\to 0.
\nonumber
\end{equation}
This finishes the proof.
\end{proof}

\textbf{Acknowledgments.}  We would like to thank the two anonymous referees for their careful reading of our manuscript, and constructive comments and suggestions, which further help us to improve the presentation.


\end{document}